\colorlet{mdtRed}{red!50!black}
\definecolor{dblue}{rgb}{0,0,.6}
\newtcolorbox{mymathbox}[1][]{colback=white, sharp corners, #1}
\newtheorem{theorem}[equation]{Theorem}
\newtheorem{corollary}[equation]{Corollary}
\newtheorem{lemma}[equation]{Lemma}
\newtheorem{proposition}[equation]{Proposition}
\theoremstyle{remark}
\numberwithin{equation}{section}
\newcommand{\e}[1]{\mathbb{E}\left[#1\right]}
\newcommand*\diff{\mathop{}\!\mathrm{d}}
\DeclareMathOperator{\Vv}{{\mathbb Var}}
\begin{document}

\title{Road layout in the KPZ class}

\author{M\'arton Bal\'azs\thanks{University of Bristol \href{mailto:m.balazs@bristol.ac.uk}{m.balazs@bristol.ac.uk}},
Sudeshna Bhattacharjee\thanks{Indian Institute of Science \href{mailto:sudeshnab@iisc.ac.in}{sudeshnab@iisc.ac.in}},\\
Karambir Das\thanks{Indian Institute of Science \href{mailto:karambirdas@iisc.ac.in}{karambirdas@iisc.ac.in}},
David Harper\thanks{Georgia Institute of Technology \href{mailto:dharper40@gatech.edu}{dharper40@gatech.edu}}}

\maketitle

\begin{abstract}
 We propose a road layout and traffic model, based on last passage percolation (LPP). An easy na\"ive argument shows that coalescence of traffic trajectories is essential to be considered when observing traffic networks around us. This is a fundamental feature in first passage percolation (FPP) models where nearby geodesics naturally coalesce in search of the easiest passage through the landscape. Road designers seek the same in pursuing cost savings, hence FPP geodesics are straightforward candidates to model road layouts. Unfortunately no detailed knowledge is rigorously available on FPP geodesics. To address this, we use exponential LPP instead to build a stochastic model of road traffic and prove certain characteristics thereof. Cars start from every point of the lattice and follow half-infinite geodesics in random directions. Exponential LPP is known to be in the KPZ universality class and it is widely expected that FPP shares very similar properties, hence our findings should equally apply to FPP-based modelling. We address several traffic-related quantities of this model and compare our theorems to real life road networks.
\end{abstract}

\section{Introduction}
Road networks and traffic patterns are clearly important phenomena in everyday life. Cars start from many locations and generally take random destinations to travel to. One's first thought might be to model car movements as independent straight trajectories. We show in Section \ref{sc:poi} that this na\"ive model results in divergent traffic densities in every region of the plane, which clearly does not align with observations.

Cars require roads to run on, and these cannot be built in arbitrary density and through arbitrary landscapes. Hence an interesting structure of road networks emerges, largely driven by geographic, historic, and social circumstances. The aim of this paper is to provide a mathematical model to describe some characteristics of such networks.

Characteristics of road networks have been considered in numerous works. As in this paper we are interested in a mathematical model to construct road networks, we only mention a handful of papers, first Barth\'elemy and Flammini \cite{bar_fla_mod_urban_st_patt}, who build urban networks by adding new centres and connecting them to the existing network in certain optimal way.

Aldous \cite{aldous_scaleinv_sp_nw} introduced the notion of \emph{Scale-invariant random spatial networks} as a potential model for road networks. This was followed by a natural construction of such an object by Kendall \cite{kendall_rnd_lines_ms} and by Kahn \cite{kahn_impr_poi_lines} based on a Poisson line process. We will base our model on a directed KPZ-style construction, hence we do not expect it to be scale-invariant.

Molinero and Hernando \cite{mol_her_model_4_gen_rd} introduce an algorithm based on a succession of Delanuay triangulation of a dense network of the centres of interest plus auxiliary points, and only keeping paths that are shortest between the original nodes of interest in the resulting network.

We build a model where, rather than the locations to connect, the environment surrounding the roads has a crucial effect on the geometry. Points to connect are thought of as having a homogeneous configuration throughout the plane, whereas obstacles roads need to get around will be random which in turn causes nontrivial network characteristics.

The environment we live in presents geographical challenges to road building. If we model this in the simplest possible way with i.i.d.\ cost distribution on some lattice, and try to build roads that minimise overall cost, then we naturally arrive to first passage percolation (FPP) models.

FPP was originally proposed as a model for the flow of fluid through porous media by Hammersley and Welsh in 1965 \cite{HW65}. Since then, FPP has evolved into a major area of interest in probability theory. See \cite{ADH15} for an extensive account on significant results in this field. Despite this active interest, many significant unanswered questions remain.

One of the fundamental objects in FPP are \emph{geodesics}, paths that collect minimal costs connecting two points among a random penalty landscape on the plane. These will be our candidates to model roads. In fact \cite{HW65} already asked about ``highways and byways'' in terms of edges frequented by geodesics, and the question received a partial answer by Ahlberg, Hanson and Hoffman \cite{ahl_hans_hoff_no_geo_fpp}.

For quantitative analysis of our road model we need refined estimates on geodesics, which unfortunately are not yet available for FPP models. Instead, we switch to exponential last passage percolation (LPP), where extensive results on last passage times and geometry of geodesics are now available. FPP is widely believed to belong to the \emph{KPZ universality class} -- a property proved for LPP. This implies that the geometry of geodesics in the two models should share basic characteristics that we use throughout our arguments. Thus, switching to LPP from FPP seems a reasonable move. Of the vast literature on the KPZ universality class, we refer to the surveys by Corwin \cite{corwin_kpz_univ16}, Ferrari-Spohn \cite{fer-spohn_rnd_growth11}, and Quastel \cite{quas_intro_kpz12}.

LPP models have been subject of intensive research in the last decades. Instead of minimizing weights (which we called costs so far), geodesics are maximising them under the constraint that paths can only take up or right steps. We define this model precisely in Section \ref{Model Definitions and Notations}.

The connection between roads and models in the KPZ universality class is not new. Solon, Bunin, Chu and Kardar \cite{sol_bun_chu_kar_opti_paths_poly} compared optimal paths in road networks with directed polymers in random medium (DPRM). LPP can be considered as extremal case of DPRM, and shares similar scaling exponents. Solon et al.\ find that shortest paths on the road network can be well approximated with DPRM -- albeit with the environment exhibiting a power law rather than in exponential LPP. They also conclude that long-range correlations in the road network play an important role in the scaling properties of optimal paths.

To model cars, we augment the basic layer of LPP models with a network of half-infinite geodesics, the a.s.\ existence of which was established by Ferrari-Pimentel and Coupier \cite{C11,FP05}. To be more precise, for any given direction and starting point, a.s.\ there exists a unique half-infinite geodesic from that point going into the given asymptotic direction. These form a perfect model of a car pursuing a distant destination in a given direction, while following the geodesic road network given to it. Each lattice point of \(\mathbb Z^2\) is thought of as the starting point of a car, and each car picks an independent random direction for its a.s.\ geodesic. As \(\mathbb Z^2\) is countable, the cars can jointly follow their own randomly oriented geodesics on a probability one event. In FPP models the direction could be Uniform(\(0,\,2\pi)\), in our LPP setup this must be restricted to Uniform(\(0,\,\frac\pi2)\). In fact we will assume some \(\varepsilon\) separation from the trivial angles \(0\) and \(\frac\pi2\).

We then discuss the following questions; notice that the model is translation-invariant, hence our inquiry for the origin is not restricting generality:
\begin{enumerate}
    \item Probabilistic estimates for the furthest distance a car can come from to the origin (see Section \ref{bounds for the furthest distance a car can come from}).
    \item Probabilistic bounds for the number of cars passing through a fixed point (see Section \ref{Bounds for Number of Cars}).
    \item How far one needs to go from a point to see high-traffic roads i.e., geodesics used by several cars at the same time (see Section \ref{Distance to find a road with large number of cars})?
\end{enumerate}
The purpose of this paper is to introduce this theoretical traffic model based on LPP, and to rigorously derive some of its interesting characteristics. We do not claim a good fit with reality: in Section \ref{sc:elev}, we compare our findings to real-world traffic data and point to various reasons why our model stays in the theoretical domain rather than proving useful for practical applications about road networks.

\subsection{A Poisson model for road network}\label{sc:poi}
Here we make the na\"ive assumption that cars go in a straight line i.e., the environment has no effect on their trajectories. The aim of this part is to demonstrate the need of a more elaborate model: this simple Poisson model cannot properly describe road networks.

Consider a homogeneous Poisson point process on $\mathbb R^2$ with intensity 1. These Poisson points represent the starting points of cars. A car chooses a direction uniformly between $0$ to $2\pi$ independently of everything, and it travels along a straight line segment of length $\ell$ in this direction. The length \(\ell\) of the trip follows the Exponential$(\gamma)$ distribution with a positive fixed parameter \(\gamma\), and is independent of all the other variables and cars.

This process can be thought of as a marked point process $\xi$ where the starting point $X_i$ of car \(i\) is a point in the homogeneous Poisson point process on \(\mathbb R^2\), and this gets decorated with the mark $(\theta_i,\,\ell_i)$ which represent the direction the car goes towards and the distance it travels respectively. The mark space is $
[0,2\pi )\times[0,\infty)
$, equipped with the standard product Borel \(\sigma\)-algebra. By the marking theroem \cite[Theorem 5.6]{last2017lectures}, $\xi$ is a Poisson point process on $\bigl(\mathbb{R}^2\times [0,2\pi)\times[0,\infty)\bigr)$. We are after the number \(N_r\) of cars that come \(r\) close to the origin.

Without loss of generality we now assume that a car starts from coordinate \((-z,0)\) of \(\mathbb R^2\), where \(z\) is a positive real. If \(z\le r\) then the car already starts inside the disc \(B(0,\,r)\) of radius \(r\) around the origin.

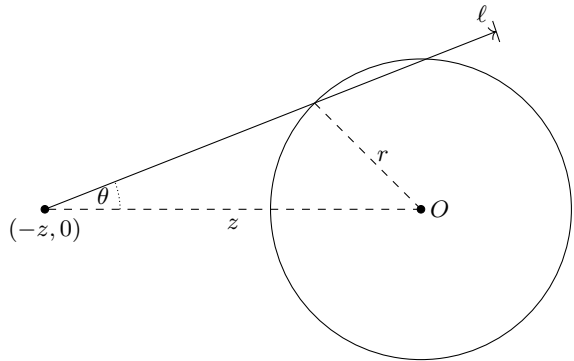
\begin{figure}[!ht]
 \begin{center}
  \begin{tikzpicture}
   \draw(0,0)circle(2)node[right]{\small\(O\)};
   \filldraw(0,0)circle(0.05);
   \draw[dashed](0,0)--(-1.414,1.414)node[midway,right]{\small\(r\)};
   \filldraw(-5,0)circle(0.05)node[below]{\small\((-z,0)\)};
   \draw[dashed](0,0)--(-5,0)node[midway,below]{\small\(z\)};
   \draw[->](-5,0)--(1,2.366)node[above left]{\small\(\ell\)};
   \draw(0.953,2.506)--(1.047,2.226);
   \draw[densely dotted](-4,0)arc(0:21.5:1)node[midway,left]{\small\(\theta\)};
  \end{tikzpicture}
 \end{center}
 \caption{A car starting from \((-z,0)\) and intersecting the disc \(B(0,\,r)\) or radius \(r\) around the origin.}\label{fig:carr}
\end{figure}

For \(z>r\), as seen from Figure \ref{fig:carr} and a bit of trigonometry, the car intersects \(B(0,\,r)\) if and only if \(|\sin\theta|\le\tfrac rz\) and \(\ell\ge z\cos\theta-\sqrt{r^2-z^2\sin^2\theta}\). For a car in general position the quantity \(z\) is to be replaced by the distance of the car's starting point from the origin. This way, exactly cars of the marked point process \(\xi\) in the above region of \(\bigl(\mathbb{R}^2\times [0,2\pi)\times[0,\infty)\bigr)\) will make it into \(B(0,\,r)\). Hence the number \(N_r\) of such cars is Poisson distributed with parameter equal the mean of this number.

Assuming a car is starting at distance \(z>r\) from the origin, we estimate its probability to hit \(B(0,\,r)\) by
\[
 \mathbb P\bigl\{|\sin\theta|\le\tfrac rz\cap\ell\ge z\cos\theta-\sqrt{r^2-z^2\sin^2\theta}\bigr\}\ge\mathbb P\bigl\{|\theta|\le\tfrac rz\cap\ell\ge z\bigr\}=\frac r{\pi z}\cdot e^{-\gamma z}.
\]
Breaking up the homogeneous Poisson process on \(\mathbb R^2\) w.r.t.\ polar coordinates, we get from here
\[
 \e{N_r}\ge\int_0^{2\pi}\int_0^r1\cdot z\diff z\diff\varphi+\int_0^{2\pi}\int_r^\infty\frac r{\pi z}\cdot e^{-\gamma z}\cdot z\diff z\diff\varphi=\pi r^2+\frac{2r}\gamma\cdot e^{-\gamma r}.
\]
We conclude that the number of cars intersecting the disc is Poisson with mean at least the right-hand side of this display.

The mean distance travelled by cars is \(\frac1\gamma\), and since we already fixed the Poisson intensity of cars at 1, we can think of this as a large number. (For actual figures, if the UK's 33.5m cars formed a homogeneous Poisson process on the UK's 243,610\,\(\text{km}^2\) area, then they would be spaced an average of 43\,metres apart. The typical driving distance is around 10\dots15\,km, which is far longer. Hence \(\frac1\gamma\) can be assumed to be much greater than \(\frac12\), which is the average distance between Poisson points of intensity 1, using the units of this article. Of course spatial density fluctuates drasticly between urban and rural areas.) Similarly, the radius \(r\) of interest e.g., for a homeowner wishing for a quiet house, can be considered \(\mathcal O(1)\). Hence we find that even in a moderate sized garden, plenty of cars should pass.  Taking the driving distance to infinity (\(\gamma\searrow0\)) makes \(N_r\) divergent.

This is not what we find in real life: the coalescence of the cars' trajectories is a significant missing feature in this na\"ive model.

\section{Model definitions, notations and results}\label{Model Definitions and Notations}
We first define the exponential last passage percolation model on $\mathbb{Z}^2$. We assign i.i.d.\ random variables $\{\tau_{v}\}_{v \in \mathbb{Z}^2}$ to each vertex of $\mathbb{Z}^2$, where $\tau_{v}$'s are distributed as Exp(1). Let, $u,v \in \mathbb{Z}^2$ be such that $u \leq v$ (i.e., if $u=(u_1,u_2),v=(v_1,v_2)$ then $u_1 \leq v_1$ and $u_2 \leq v_2$).
For an up-right path $\gamma$ between $u$ and $v$ we define $ \ell (\gamma)$ to be $\sum_{v \in \gamma \setminus \{u,v\}} \tau_{v}.$ Let $T(u,v):=$max$\{ \ell(\gamma): \gamma$ is an up-right path from $u$ to $v$\}. $T(u,v)$ is the \textit{last passage time} between $u$ and $v$. Clearly, as the number of up-right paths between $u$ and $v$ is finite, the maximum is always attained. Between any two points $u,v \in \mathbb{Z}^2$ maximum attaining paths are called \textit{geodesics}. As $\tau_{v}$ has a continuous distribution, between any two points $u\le v \in \mathbb{Z}^2$ almost surely there exists a unique geodesic. As a consequence of this uniqueness geodesics do not form loops. Hence, geodesics starting from ordered vertices (ordered in the $x+y=0$ direction) always stay ordered. This property of geodesics is known as \textit{planarity} and will be used in our proofs. The a.s.\ unique geodesic will be denoted by $\Gamma_{u,v}$. An infinite up-right path $u_0,u_1,u_2...$ in $\mathbb{Z}^2$ is called a \textit{semi-infinite geodesic starting from $u_0$}, if every finite segment of it is a geodesic. A semi-infinite geodesic starting from $u \in \mathbb{Z}^2$ is said to have \textit{direction} $\theta$ if $\lim_{n \rightarrow \infty} \frac{u_n}{|u_n|}$ exists and equals $\theta$ (we will always identify an element on the unit circle with its corresponding angle) and is denoted by $\Gamma_u^{\theta}$. It is a fact that for a fixed direction $\theta$, almost surely starting from any point $u \in \mathbb{Z}^2$ there exists a unique semi-infinite geodesic at the direction $\theta$ \cite{C11,FP05}. 
Now we will explain the kind of traffic problems we are interested in. Let $\varepsilon>0$ be fixed and imagine cars starting from each vertex of $\mathbb{Z}^2$ and picking up uniform direction from the interval $(\varepsilon,\pi/2-\varepsilon)$ independent of each other, also independent of the vertex weights. The cars then travel via the a.s.\ unique semi-infinite geodesic in the chosen direction from the starting point. Notice that we simply assume that cars travel infinitely far. While in the na\"ive Poisson model they travelled much further than the radius \(r\) of the neighborhood we considered, infinite driving distances would have made that model obsolete. As we shall see, this is not the case for our LPP traffic model.

See Figure \ref{fig:simu} for a simulation of some cars in the model. The main phenomenon we see is the empty regions towards the middle of the picture: most points around there will not see any cars passing from the distance to the bottom of the picture. When cars are started from all of \(\mathbb Z^2\), that translates into a limited close neighbourhood from where cars can arrive to a point say, the origin. All other cars from further away will have coalesced into busy roads somewhere else that avoid passing through the origin.
\begin{figure}[!ht]
 \includegraphics[width=\textwidth]{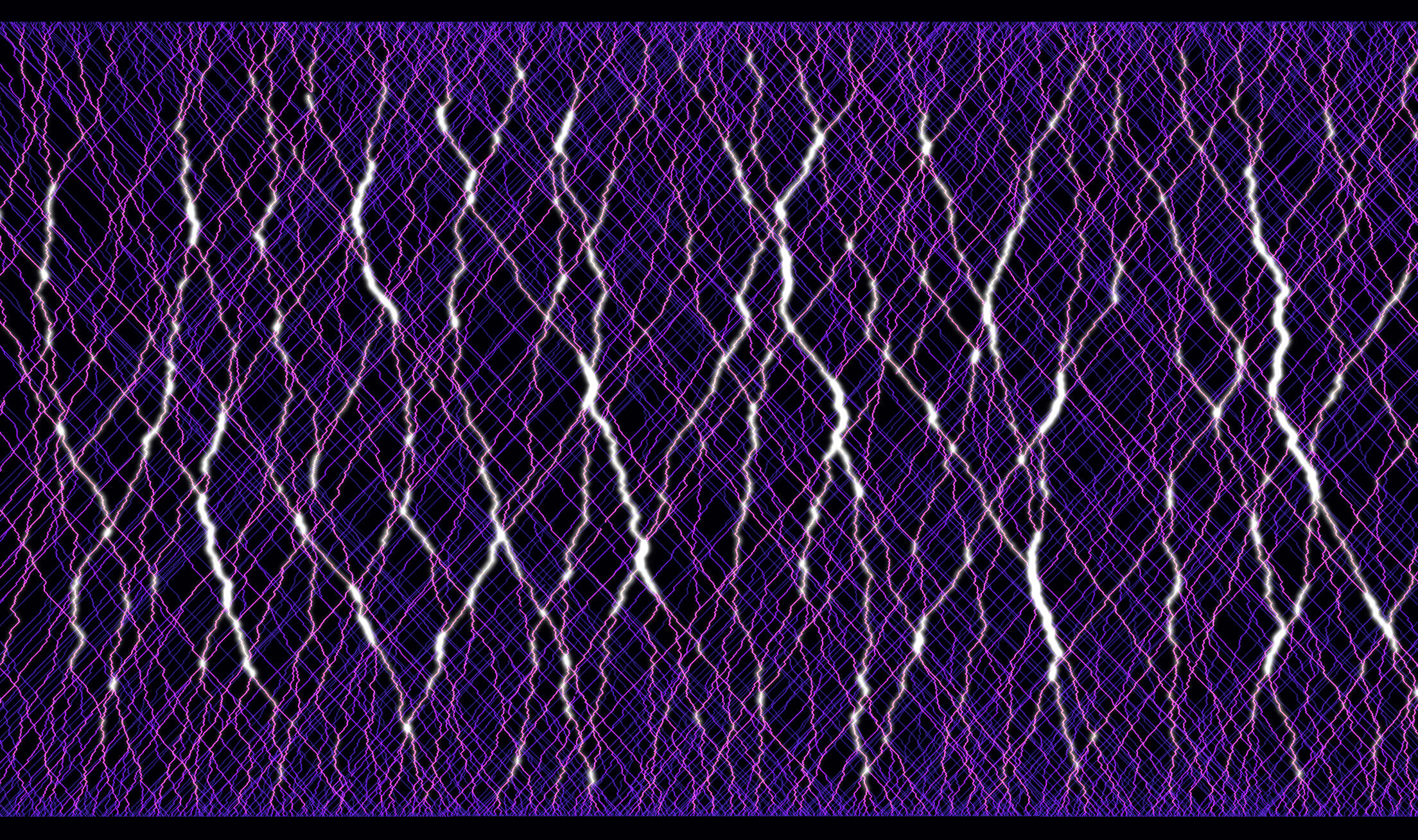}
 \caption{Simulation of geodesics in random directions, the picture is rotated left by \(45^\text o\). Each point on the bottom line picks another one randomly on the top within the admissible angle range \(0^\text o\,\text-\,90^\text o\) and sends a geodesic there (i.e., this simulation only launches cars from one line, as opposed to our model where this happens from each point of \(\mathbb Z^2\)). Many geodesics coalesce for most of their travel, forming the heavy traffic paths which are marked in lighter colors. These leave gaps towards the middle of the picture where no geodesics pass. The dark mesh of low-traffic geodesics close to \(0^\text o\) and \(90^\text o\) demonstrate the need of separation from the edge of the angle domain: geodesics with such angles have little room to wiggle, hence do not coalesce much.}\label{fig:simu} 
\end{figure}

Of the several questions that can be asked, we will address the tail behaviour of the following quantities:
\begin{itemize}
 \item How many cars pass through the origin (Theorems \ref{finiteness_theorem}, \ref{number of cars upper bound})?
 \item What is the farthest distance a car arrives from to the origin (Theorem \ref{depth_bounds_theorem})?
 \item How far do we have to walk to find a busy road that carries certain volume of traffic (Theorems \ref{high_probability_event}, \ref{road_lower_bound})?
\end{itemize}

It is this last question that seemed tractable from the observations point of view. In Section \ref{sc:elev} we compare our result to UK road layouts and traffic statistics, and we discuss the limitations of this experiment.

\subsection{Further notations and results}
Let $\varepsilon>0$ be fixed; from now on all the constants we will obtain from the theorems will depend only on this $\varepsilon$. From each vertex $v \in \mathbb{Z}^2$ an angle is chosen according to i.i.d.\ uniform $(\varepsilon, \frac{\pi}{2}-\varepsilon)$ distribution and we consider the a.s.\ unique semi-infinite geodesic in the chosen direction. We will denote the angle chosen by $v$ by $\theta_v$ and the semi-infinite geodesic in that direction by $\Gamma_v^{\theta_v}$. For $r \in \mathbb{Z}$ let $\boldsymbol{r}$ denote the vertex $(r,r) \in \mathbb{Z}^2$. For convenience sometimes we will work with the rotated axes $x+y=0$ and $x-y=0$. They will be called space axis and time axis respectively. For a vertex $v \in \mathbb{Z}^2$, $\phi(v)$ will denote the time coordinate of $v$ and $\psi(v)$ will denote the space coordinate of $v$. Precisely, for $u,v \in \mathbb{Z}^2$, we have 
\begin{displaymath}
    \phi(u,v)=u+v,\qquad\psi(u,v)=u-v.
\end{displaymath}For $u,v \in \mathbb{Z}^2$, $\Gamma_{u,v}$ denotes the geodesic between $u$ and $v$. For a deterministic direction $\alpha, \Gamma_v^{\alpha}$ denotes the a.s.\ semi-infinite geodesic starting from $v$ in the direction $\alpha$. The line $x+y=T$ will be denoted by $\mathcal{L}_T$. For a geodesic $\Gamma, \Gamma(T)$ will denote the random intersection point of $\Gamma$ and $\mathcal{L}_T.$
\\
Let $N$ denote the number of cars passing through the origin. More precisely we define, 
\begin{displaymath}
N:=\#\{u \in \mathbb{Z}^2: \boldsymbol{0} \in \Gamma_u^{\theta_u}\}.
\end{displaymath}
We have the following theorem.
\begin{theorem}
\label{finiteness_theorem}
$N$ is finite almost surely.
\end{theorem}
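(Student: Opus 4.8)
The plan is to argue by contradiction, using planarity and the a.s.\ non-existence of bi-infinite geodesics rather than a first-moment estimate. A union bound over starting vertices is not available: although a single distant car reaches $\boldsymbol{0}$ only with small probability, these contributions are heavy-tailed and strongly correlated across scales (one in fact expects $\e{N}=\infty$), so no naive Borel--Cantelli argument over $u\in\mathbb{Z}^2$ will close. The only elementary reduction I would record first is that, since every $\Gamma_u^{\theta_u}$ is an up-right path, $\boldsymbol{0}\in\Gamma_u^{\theta_u}$ forces $u\le\boldsymbol{0}$, and such a path meets $\mathcal{L}_0$ in exactly one vertex; hence $N=|\{u\le\boldsymbol{0}:\Gamma_u^{\theta_u}(0)=\boldsymbol{0}\}|$. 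Suppose, toward a contradiction, that $\p{N=\infty}>0$, so that on a positive-probability event there are distinct contributing vertices $u_{(1)},u_{(2)},\dots$ with $|u_{(k)}|\to\infty$.

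Next I would build a backward semi-infinite geodesic out of $\boldsymbol{0}$ that infinitely many of these cars share. Every contributing geodesic enters $\boldsymbol{0}$ through one of the two neighbours $(-1,0)$ or $(0,-1)$, so by pigeonhole infinitely many enter through a common vertex $w_1$. Passing to that infinite subfamily and repeating the argument at $w_1$, then at $w_2$, and so on, I extract a down-left infinite path $W:\boldsymbol{0},w_1,w_2,\dots$ and nested infinite families $S_1\supseteq S_2\supseteq\cdots$ such that every car in $S_j$ passes through $w_j,w_{j-1},\dots,w_1,\boldsymbol{0}$. Each finite segment $W[w_j,\boldsymbol{0}]$ is traversed by an actual car, hence by uniqueness of point-to-point geodesics equals $\Gamma_{w_j,\boldsymbol{0}}$ and is a geodesic; thus $W$ is a semi-infinite geodesic into $\boldsymbol{0}$ with some asymptotic direction $\beta$ pointing into the third quadrant.

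Finally I would produce a bi-infinite geodesic and invoke its non-existence. Choose $c_j\in S_j$ with start $u_{(j)}$ and forward direction $\theta_{(j)}:=\theta_{u_{(j)}}\in(\epsilon,\tfrac{\pi}{2}-\epsilon)$, and pass to a subsequence with $\theta_{(j)}\to\theta_\ast$. The car $c_j$ follows $W$ from $w_j$ to $\boldsymbol{0}$ and then continues as $\Gamma_{\boldsymbol{0}}^{\theta_{(j)}}$, so $\boldsymbol{0}$ lies on $\Gamma_{w_j}^{\theta_{(j)}}$ at distance of order $|w_j|$ from $w_j$. By the transversal-fluctuation estimate for semi-infinite geodesics, $\boldsymbol{0}$ then lies within $O(|w_j|^{2/3})$ of the direction-$\theta_{(j)}$ ray from $w_j$, so the direction from $w_j$ to $\boldsymbol{0}$ deviates from $\theta_{(j)}$ by $O(|w_j|^{-1/3})$. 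Since $w_j\to\infty$ along $W$, that direction tends to $\beta+\pi$, forcing $\theta_\ast=\beta+\pi$. Using compactness of geodesics and continuity of the direction map, the paths $W[w_j,\boldsymbol{0}]$ concatenated with $\Gamma_{\boldsymbol{0}}^{\theta_{(j)}}$ converge to the concatenation of $W$ with $\Gamma_{\boldsymbol{0}}^{\theta_\ast}$, a bi-infinite geodesic in direction $\theta_\ast$. As exponential LPP a.s.\ admits no bi-infinite geodesic, this event has probability zero, contradicting $\p{N=\infty}>0$; hence $N<\infty$ almost surely.

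The hard part will be the last step, namely turning the combinatorial limit into a genuine bi-infinite geodesic: one must justify subsequential convergence of the random geodesics $\Gamma_{w_j}^{\theta_{(j)}}$ to a limiting path all of whose finite subpaths are geodesics, and pin the forward direction to $\beta+\pi$, which is exactly where uniform transversal-fluctuation control for directions bounded away from $0$ and $\tfrac{\pi}{2}$ (the role of $\epsilon$) enters. A secondary subtlety is that $\beta$ is random, so the non-existence of bi-infinite geodesics must be used in its strong form (no bi-infinite geodesic in \emph{any} direction), not merely for a fixed deterministic direction.
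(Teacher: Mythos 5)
Your route is genuinely different from the paper's: there, Theorem~\ref{finiteness_theorem} is a one-line corollary of the quantitative tail bound of Theorem~\ref{number of cars upper bound}, since $\mathbb{P}(N\ge n^{4/3})\le C_1\log n\,n^{-1/3}\to 0$ forces $\mathbb{P}(N=\infty)=0$. Your combinatorial skeleton is sound as far as it goes: the pigeonhole extraction of the backward path $W$, the observation that each $W[w_j,\boldsymbol{0}]$ is a sub-path of an actual car's semi-infinite geodesic and hence a geodesic, and a K\H{o}nig's-lemma subsequential limit of the forward continuations, do produce a bi-infinite path every finite segment of which is a segment of some $\Gamma_{u_{(j)}}^{\theta_{(j)}}$ and therefore a geodesic.

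The genuine gap is in the final step. The statement ``exponential LPP a.s.\ admits no bi-infinite geodesic'' is false as written: every row and every column of $\mathbb{Z}^2$ is trivially a bi-infinite geodesic (a row segment is the unique up-right path between its endpoints). The actual theorem (Basu--Hoffman--Sly; Bal\'azs--Busani--Sepp\"al\"ainen) excludes only \emph{non-trivial} bigeodesics, so you must additionally rule out that your limit path is a full horizontal or vertical line, and the machinery you set up for this does not close: Proposition~\ref{transversal_fluctuation_of_semi_infinite_geodesic} is a probability estimate for a fixed starting point and fixed direction, and you apply it to the random vertices $w_j$ and random angles $\theta_{(j)}$ along a configuration-dependent subsequence with no union bound or Borel--Cantelli; moreover ``continuity of the direction map'' $\theta\mapsto\Gamma_{\boldsymbol 0}^{\theta}$ fails at the exceptional directions, and the identification $\theta_\ast=\beta+\pi$ presupposes that $W$ is directed, which is itself a nontrivial input. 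The repair is fortunately simpler than what you attempt: drop the direction-matching entirely, and note that by planarity every $\Gamma_{\boldsymbol 0}^{\theta}$ with $\theta\in(\epsilon,\tfrac{\pi}{2}-\epsilon)$ is sandwiched between the a.s.\ directed geodesics $\Gamma_{\boldsymbol 0}^{\epsilon}$ and $\Gamma_{\boldsymbol 0}^{\pi/2-\epsilon}$, so the forward limit eventually leaves both axes and the constructed bigeodesic is non-trivial. Even so repaired, your argument rests on the non-existence of non-trivial bigeodesics, a far deeper external theorem than anything the paper uses, and it returns only qualitative finiteness, whereas the paper obtains finiteness for free from a tail bound it must prove anyway.
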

\begin{proof} This will follow from Theorem \ref{number of cars upper bound} which we will prove in Section \ref{Bounds for Number of Cars}.
\end{proof}
But $N$ has infinite expectation. To show this we first need the following definition. For $n \in \mathbb{N}$, define $N_n$ to be the number of cars starting from the line $\mathcal{L}_{-n}$ and going through the origin. We have the following theorem.
\begin{theorem}
\label{expectation_1 theorem}
$\mathbb{E}(N_n)=1$.
\end{theorem}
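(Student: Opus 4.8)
The plan is to prove this by a mass-transport (translation-invariance) argument, so that no quantitative input on coalescence is needed and the clean constant $1$, independent of $n$, falls out automatically. First I would recast the event ``the car from $u$ passes through the origin'' in terms of the crossing point of its geodesic with $\mathcal L_0$. Since a semi-infinite geodesic takes only up and right steps, the level $x+y$ strictly increases by one at each step along it; hence a geodesic started on $\mathcal L_{-n}$ meets $\mathcal L_0$ in \emph{exactly one} vertex, namely $\Gamma_u^{\theta_u}(0)$. Consequently $\boldsymbol 0\in\Gamma_u^{\theta_u}$ holds if and only if $\Gamma_u^{\theta_u}(0)=\boldsymbol 0$, and by linearity
\[
N_n=\sum_{u\in\mathcal L_{-n}\cap\mathbb Z^2}\mathbbm 1\{\Gamma_u^{\theta_u}(0)=\boldsymbol 0\}.
\]

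Next I would set up the transport kernel. For $u\in\mathcal L_{-n}\cap\mathbb Z^2$ and $w\in\mathcal L_0\cap\mathbb Z^2$ put $F(u,w)=\mathbbm 1\{\Gamma_u^{\theta_u}(0)=w\}$, thought of as one unit of mass sent from the source $u$ to the point where its car crosses $\mathcal L_0$. Two facts drive the argument. First, each source emits exactly one unit, $\sum_w F(u,w)=1$, precisely by the single-crossing observation above. Second, the joint field of weights and angles $(\{\tau_v\},\{\theta_v\})$ is i.i.d., hence invariant in law under the diagonal shift $\sigma_k(x,y)=(x+k,y-k)$, which preserves both $\mathcal L_{-n}$ and $\mathcal L_0$. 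Because $\Gamma_{\sigma_k u}^{\theta_{\sigma_k u}}$ has the law of $\sigma_k\Gamma_u^{\theta_u}$, we get the diagonal invariance $\e{F(\sigma_k u,\sigma_k w)}=\e{F(u,w)}$ for every $k$.

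To finish I would index the lattice points of each line by $\mathbb Z$ (along the space axis) so that $\sigma_k$ acts as $i\mapsto i+k$, with $\boldsymbol 0$ the $0$-th point of $\mathcal L_0$. Writing $g(i,j)=\e{F(u_i,w_j)}$, the invariance forces $g(i,j)=h(i-j)$ with $h(m):=g(m,0)$. Summing the emitted mass of a single source and using $\sum_wF(u_0,w)=1$ gives $\sum_m h(m)=\sum_j g(0,j)=1$. On the other hand, since all terms are nonnegative, Tonelli justifies every interchange and
\[
\e{N_n}=\e{\sum_i F(u_i,\boldsymbol 0)}=\sum_i g(i,0)=\sum_m h(m)=1.
\]
(Equivalently, one may first fix a deterministic direction $\theta$, run the identical transport to get expected count $1$, and then average over the uniform direction; the direct version above already incorporates the random $\theta_u$'s.)

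The conceptual content here is light, so the ``hard part'' is really one of care rather than difficulty. The two points that must be nailed down are: that a randomly directed semi-infinite geodesic is jointly well defined and crosses $\mathcal L_0$ in exactly one vertex (immediate from the up-right monotonicity of $x+y$ together with the a.s.\ joint existence of the geodesics recalled above), and the diagonal shift-invariance $g(i,j)=h(i-j)$, after which $F\ge 0$ and Tonelli legitimize all the interchanges of sum and expectation. I would stress that no estimate on how much geodesics coalesce enters at all; the balance ``one unit out of each source'' versus ``$\e{N_n}$ into the origin'' is exactly what makes the answer the dimensionless constant $1$ for every $n$, even though for large $n$ most points of $\mathcal L_0$ receive no cars while a few receive many.
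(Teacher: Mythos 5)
Your proof is correct and is essentially the same argument as the paper's: the paper also uses translation invariance (shifting each source $x\in\mathcal L_{-n}$ to the origin, so that $\mathbb P(\mathcal G_x)$ becomes the probability that the car from $\boldsymbol 0$ passes through $-x\in\mathcal L_n$) combined with the fact that an up-right semi-infinite geodesic crosses a fixed antidiagonal line in exactly one vertex, so the indicators sum to $1$ almost surely. Your mass-transport formulation with the kernel $F(u,w)$ and the diagonal shifts is just a more formal packaging of the identical idea.
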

\begin{proof}
For $x \in \mathcal{L}_{-n},$ let $\mathcal{G}_x$ denote the event that the car starting from $x$ goes through the origin. Then 
\begin{displaymath}
    N_n=\sum_{x \in \mathcal{L}_{-n}}\mathbbm{1}_{\mathcal{G}_x}.
\end{displaymath}
So,
\begin{displaymath}
    \mathbb{E}(N_n)=\sum_{x \in \mathcal{L}_{-n}} \mathbb{P}(\mathcal{G}_x).
\end{displaymath}
For $x \in \mathcal{L}_{-n}$, we define $\mathcal{G}_{-x}$ to be the event that the car starting from $\boldsymbol{0}$ goes through $-x$. We have 
\begin{displaymath}
    \mathbb{P}(\mathcal{G}_x)=\mathbb{P}(\mathcal{G}_{-x}).
\end{displaymath}
So,
\begin{displaymath}
     \mathbb{E}(N_n)=\sum_{x \in \mathcal{L}_{-n}} \mathbb{E}(\mathbbm{1}_{\mathcal{G}_{-x}})=1.
\end{displaymath}
\end{proof}
The following corollary is not surprising then, and is also in line with the divergence of the na\"ive Poisson model.
\begin{corollary}
\label{infinite_expectation}
$\mathbb{E}(N)=\infty$.
\end{corollary}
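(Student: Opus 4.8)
The plan is to derive Corollary~\ref{infinite_expectation} from Theorem~\ref{expectation_1 theorem} by decomposing $N$ according to the space-time line from which each car originates. The key observation is that every car passing through $\boldsymbol 0$ must start somewhere in $\mathbb Z^2$, and since a semi-infinite geodesic through the origin is an up-right path, any such car starts at a vertex $u$ with $\phi(u)\le 0$ (in the rotated coordinates, the car's starting time is at most the origin's time coordinate, which is $0$). Grouping these starting vertices by their time coordinate, I would write
\begin{displaymath}
    N=\sum_{n=0}^{\infty} N_n,
\end{displaymath}
where $N_n$ is precisely the number of cars starting on the line $\mathcal L_{-n}$ that pass through the origin, as defined just before Theorem~\ref{expectation_1 theorem}. (One should be slightly careful about the $n=0$ term, i.e.\ the car starting at the origin itself, but this contributes at most $1$ and does not affect the divergence.)

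First I would justify the decomposition: the events ``car from $u$ passes through $\boldsymbol 0$'' partition $\{u:\boldsymbol0\in\Gamma_u^{\theta_u}\}$ according to the value $n=-\phi(u)=-(u_1+u_2)\ge 0$, so that the indicators defining $N$ regroup exactly into the sum of the $N_n$. This is a countable sum of nonnegative integer-valued random variables, so no convergence issue arises and Tonelli's theorem applies directly to the expectation.

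Next, taking expectations and using linearity (valid term by term for nonnegative summands), I would invoke Theorem~\ref{expectation_1 theorem}, which gives $\mathbb E(N_n)=1$ for each $n\in\mathbb N$. Hence
\begin{displaymath}
    \mathbb E(N)=\sum_{n=0}^{\infty}\mathbb E(N_n)=\sum_{n=0}^{\infty}1=\infty.
\end{displaymath}
This is the entire argument; the serious content has already been isolated in Theorem~\ref{expectation_1 theorem}.

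I do not expect a genuine obstacle here, since the corollary is essentially an exchange of summation and expectation. The only point requiring care is the bookkeeping of the decomposition $N=\sum_n N_n$: one must check that every car through the origin is counted in exactly one $N_n$, which follows from the fact that an up-right semi-infinite geodesic reaching $\boldsymbol 0$ originates at a unique starting vertex with a well-defined, nonnegative time-displacement $n$ from $\mathcal L_0$. Because all terms are nonnegative, the interchange of $\mathbb E$ and the infinite sum is unconditionally justified by monotone convergence (or Tonelli), so the divergence of $\sum_n \mathbb E(N_n)=\sum_n 1$ immediately yields $\mathbb E(N)=\infty$.
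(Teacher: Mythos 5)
Your proposal is correct and follows exactly the paper's route: decompose $N$ as $\sum_n N_n$, apply Theorem~\ref{expectation_1 theorem} to get $\mathbb E(N_n)=1$ for each $n$, and sum (the paper sums from $n=1$ and omits your careful remarks on Tonelli and the $n=0$ term, but the argument is the same). No further comment is needed.
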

\begin{proof}
This is clear using Theorem \ref{expectation_1 theorem}. Indeed we have,
\begin{displaymath}
    \mathbb{E}(N)=\sum_{n=1}^{\infty}\mathbb{E}(N_n)=\infty.
\end{displaymath}
\end{proof}

We now dive a bit deeper into the scaling properties of the model.
\begin{theorem}
\label{number of cars upper bound}
There exist constants $C_1, c_1>0$ (depending on $\varepsilon$) and $n$ sufficiently large such that 
\begin{displaymath}
    \frac{c_1}{n^{1/4}} \leq \mathbb{P}(N \geq n) \leq \frac{C_1 \log n}{n^{1/4}}.
\end{displaymath}
\end{theorem}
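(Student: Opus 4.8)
The plan is to split the cars through the origin according to how far behind the origin they start, controlling the nearby cars by a first-moment (Markov) estimate and the faraway cars by a coalescence-based tail bound on the depth they can reach. Throughout write $N_T$ for the number of cars starting on $\mathcal L_{-T}$ that pass through $\boldsymbol 0$, so that $N=\sum_{T\ge 0}N_T$, and recall from Theorem \ref{expectation_1 theorem} that $\mathbb E[N_T]=1$ for every $T$. Let $D:=\max\{T:N_T\ge 1\}$ denote the depth of the deepest car reaching the origin, which is finite a.s.\ by Theorem \ref{finiteness_theorem}. The two regimes are $N_{<n}:=\sum_{0\le T<n}N_T$ and $N_{\ge n}:=\sum_{T\ge n}N_T$.

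For the upper bound I would write $\mathbb P(N\ge n^{4/3})\le \mathbb P(N_{<n}\ge \tfrac12 n^{4/3})+\mathbb P(N_{\ge n}\ge \tfrac12 n^{4/3})$. The first term is immediate from Theorem \ref{expectation_1 theorem}: since $\mathbb E[N_{<n}]=\sum_{0\le T<n}\mathbb E[N_T]\le n$, Markov's inequality gives $\mathbb P(N_{<n}\ge\tfrac12 n^{4/3})\le 2n^{-1/3}$, and this is the true source of the $n^{-1/3}$ rate. For the second term the key observation is that $N_{\ge n}\ge\tfrac12 n^{4/3}$ forces $N_{\ge n}\ge 1$, hence $D\ge n$; thus $\mathbb P(N_{\ge n}\ge\tfrac12 n^{4/3})\le \mathbb P(D\ge n)$, and the whole deep regime reduces to a tail bound on the depth. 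The target is $\mathbb P(D\ge n)\lesssim n^{-1/3}\log n$, and I expect the $\log n$ in the statement to enter exactly here, as slack in the depth estimate (e.g.\ a union over the dyadic depth scales $[2^k n,2^{k+1}n)$).

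For the lower bound it is cleanest to condition on the weights $\{\tau_v\}$. Given the environment the geodesics $\Gamma_u^\theta$ are determined, and by planarity the set $I_u$ of admissible directions $\theta\in(\epsilon,\tfrac\pi2-\epsilon)$ with $\boldsymbol 0\in\Gamma_u^\theta$ is an interval; since the directions $\theta_u$ are i.i.d.\ and independent of the weights, $N=\sum_u \mathbbm 1[\theta_u\in I_u]$ is, conditionally, a sum of independent Bernoulli variables with $S:=\mathbb E[N\mid\{\tau_v\}]=\sum_u |I_u|/(\tfrac\pi2-2\epsilon)$. A Chernoff bound then transfers a lower bound on $S$ to one on $N$, so it suffices to produce an event of probability $\gtrsim n^{-1/3}$ on which $S\gtrsim n^{4/3}$. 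I would build this from a favorable coalescence configuration at depth scale $n$: on the event $\{D\ge Cn\}$ the geodesics feeding the origin form a fat tree whose width at depth $T$ is of order $T^{1/3}$, so that $S\gtrsim\sum_{T\le n}T^{1/3}\gtrsim n^{4/3}$. Quantitatively this needs both $\mathbb P(D\ge Cn)\gtrsim n^{-1/3}$ and the fat-catchment lower bound, each established by a second-moment/Paley--Zygmund argument on a deep shell of lines, using geodesic coalescence estimates.

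The main obstacle, and the only genuinely hard input, is the two-sided control of the depth tail $\mathbb P(D\ge\rho)\asymp\rho^{-1/3}$ together with the structural fact that reaching depth $\rho$ forces order $\rho^{4/3}$ cars through the origin; informally this is the relation $N\asymp D^{4/3}$ that makes $\{N\ge n^{4/3}\}$ and $\{D\ge n\}$ comparable. Both rest on KPZ inputs for exponential LPP: the $T^{2/3}$ transversal fluctuation of geodesics, and quantitative coalescence of semi-infinite geodesics (exit-point and Busemann-function estimates) that governs the width of the origin's catchment at each depth. Matching the exponents here---in particular showing the catchment width is of order $T^{1/3}$ on the rare deep-reach event rather than the maximal $T^{2/3}$ or the typical $\mathcal O(1)$---is where the real work lies; the elementary Markov step already pins down the $n^{-1/3}$ scale, and the logarithmic factor is a technical artifact of the upper depth estimate.
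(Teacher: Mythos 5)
Your upper bound is correct and is in fact cleaner than the paper's. The paper also splits off the event $\{D\ge 2n\}$ and pays $Cn^{-1/3}$ for it via Theorem \ref{depth_bounds_theorem}, but it then controls the complementary event by an averaging argument over a segment of $\lceil n^{2/3}\rceil$ vertices of $\mathcal L_0$ combined with a decomposition into $\sim n^{1/3}$ angular windows, transversal-fluctuation estimates and Hoeffding's inequality; it is the union bound over these angular windows that produces the $\log n$ in the statement. Your route --- Markov's inequality on $N_{<n}$ using $\mathbb E[N_T]=1$ from Theorem \ref{expectation_1 theorem}, plus the inclusion $\{N_{\ge n}\ge \tfrac12 n^{4/3}\}\subset\{D\ge n\}$ --- gives $\mathbb P(N\ge n^{4/3})\le Cn^{-1/3}$ with no logarithm at all, a stronger bound, provided you import the depth tail $\mathbb P(D\ge n)\le Cn^{-1/3}$. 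That depth bound is genuinely hard (it is Theorem \ref{depth_bounds_theorem}, proved by an averaging argument together with the transversal-fluctuation and coalescence estimates of \cite{BBB23}), and it carries no logarithm, so your guess that the $\log n$ enters through the depth estimate is off --- but harmlessly so.

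The lower bound is where the genuine gap lies. The conditional-Bernoulli/Chernoff reduction to exhibiting an event of probability $\gtrsim n^{-1/3}$ on which $S\gtrsim n^{4/3}$ is a nice reformulation, but the step meant to produce this event does not hold as stated. The event $\{D\ge Cn\}$ only guarantees that each line $\mathcal L_{-T}$ with $T\le Cn$ contains \emph{one} vertex $w$ with $I_w\ne\emptyset$ (the point where the deep geodesic crosses that line); it gives no lower bound on $|I_w|$ nor on the number of such vertices, so it does not force the catchment measure $\sum_{u\in\mathcal L_{-T}}|I_u|$ to be of order $T^{1/3}$ --- note this sum has mean of order one for every $T$, so the claimed fatness is a rare event that must be constructed, not deduced from $D\ge Cn$. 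The paper builds it explicitly: it fixes $\sim n^{1/3}/(2M)$ well-separated angular windows $A_i$ of width $n^{-1/3}$; shows each contributes probability $\gtrsim n^{-2/3}$ of sending $n^{4/3}/\ell_0$ cars through some point of a segment of $\lceil 4Mn^{2/3}\rceil$ sites (via transversal-fluctuation confinement to a parallelogram of $\sim n^{5/3}$ vertices, a bound on the number of coalescence classes from \cite{BHS18}, and Berry--Esseen for the number of those vertices choosing an angle in $A_i$); and then, crucially, bounds the pairwise intersections by $\mathbb P(\mathcal E_i\cap\mathcal E_j)\lesssim \log|k_{i,j}|/(k_{i,j}^2n^{2/3})$ via a delicate geodesic-overlap estimate so that the two-term Bonferroni inequality survives after summing over $i<j$. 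Your ``second-moment/Paley--Zygmund on a deep shell'' points in the right direction, but this pair-correlation bound --- where most of the work in the paper's lower bound actually lives --- is exactly what the proposal leaves unsupplied.
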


To get estimates for the furthest distance a car can come from we define the following random variable.
\begin{displaymath}
    D:=\max\{n \in \mathbb{N}: \exists u \in \mathcal{L}_{-n} \text{ such that } \boldsymbol{0} \in \Gamma_u^{\theta_u} \}.
\end{displaymath}
\begin{theorem}
\label{depth_bounds_theorem}
There exists $C,c >0$ (depending on $\varepsilon$) such that the following holds
\begin{displaymath}
cn^{-1/3} \leq \mathbb{P}(D \geq n) \leq Cn^{-1/3}.
\end{displaymath}
\end{theorem}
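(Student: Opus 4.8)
The two inequalities call for genuinely different techniques: a second-moment argument for the lower bound, and a coalescence / transversal-fluctuation estimate for the upper bound.

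For the lower bound, the plan is to observe that $\{D\ge n\}\supseteq\{N_n\ge 1\}$, since any car from $\mathcal{L}_{-n}$ through the origin already witnesses $D\ge n$. Hence it suffices to bound $\mathbb{P}(N_n\ge 1)$ from below, and since $\mathbb{E}(N_n)=1$ by Theorem \ref{expectation_1 theorem}, the Paley--Zygmund inequality gives
\begin{displaymath}
\mathbb{P}(D\ge n)\ge\mathbb{P}(N_n\ge 1)\ge\frac{(\mathbb{E}(N_n))^2}{\mathbb{E}(N_n^2)}=\frac{1}{\mathbb{E}(N_n^2)}.
\end{displaymath}
So the whole lower bound reduces to the second-moment estimate $\mathbb{E}(N_n^2)\le Cn^{1/3}$.

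To control $\mathbb{E}(N_n^2)=\sum_{x,y\in\mathcal{L}_{-n}}\mathbb{P}(\mathcal G_x\cap\mathcal G_y)$ I would separate the diagonal (contributing $\mathbb{E}(N_n)=1$) from the off-diagonal terms. For $x\ne y$ the directions $\theta_x,\theta_y$ are independent of each other and of the environment, so conditionally on the weights $\mathbb{P}(\mathcal G_x\cap\mathcal G_y\mid\omega)=\lambda(A_x)\lambda(A_y)/L^2$, where $A_x=\{\theta\in(\epsilon,\tfrac\pi2-\epsilon):\boldsymbol 0\in\Gamma_x^{\theta}\}$, $\lambda$ is Lebesgue measure, and $L=\tfrac\pi2-2\epsilon$. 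Writing $M(\theta)=|\{x\in\mathcal{L}_{-n}:\boldsymbol 0\in\Gamma_x^{\theta}\}|$ and $W=\int_{\epsilon}^{\pi/2-\epsilon}M(\theta)\diff\theta=\sum_x\lambda(A_x)$, this yields $\mathbb{E}(N_n^2)\le 1+L^{-2}\,\mathbb{E}(W^2)$. The key KPZ input is that for a fixed direction the number $M(\theta)$ of semi-infinite geodesics from $\mathcal{L}_{-n}$ through a single point is $O(n^{2/3})$ (the transversal/coalescence scale), while the reversal identity $\mathbb{E}(M(\theta))=1$ forces $\mathbb{P}(M(\theta)>0)=O(n^{-2/3})$; moreover the events $\{M(\theta)>0\}$ decorrelate once $|\theta-\theta'|\gg n^{-1/3}$, since at time $-n$ two such directions are separated on the coalescence scale. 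Bounding $\mathbb{E}(M(\theta)M(\theta'))\lesssim n^{4/3}\,\mathbb{P}(M(\theta)>0,M(\theta')>0)$ and integrating over the $n^{-1/3}$-wide diagonal band then gives $\mathbb{E}(W^2)\lesssim n^{4/3}\cdot(n^{-1/3}\cdot n^{-2/3})=n^{1/3}$, as required.

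For the upper bound, the first step is a reduction that removes the random directions. If $D\ge n$ there is a car from some $x$ with $\phi(x)\le -n$ whose geodesic passes through $\boldsymbol 0$; letting $w\in\mathcal{L}_{-n}$ be the point where this up-right path crosses the time level $-n$, the tail of $\Gamma_x^{\theta_x}$ from $w$ onward is, by uniqueness of semi-infinite geodesics, exactly $\Gamma_w^{\theta_x}$, and it passes through $\boldsymbol 0$. Hence
\begin{displaymath}
\{D\ge n\}\subseteq E_n:=\{\exists\, w\in\mathcal{L}_{-n},\ \theta\in(\epsilon,\tfrac\pi2-\epsilon):\ \boldsymbol 0\in\Gamma_w^{\theta}\},
\end{displaymath}
an event about the geodesic field alone. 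I would then cover the admissible angles by a net $\theta_1<\dots<\theta_K$ of spacing $n^{-1/3}$, so $K\asymp n^{1/3}$, and bound $\mathbb{P}(E_n)\le\sum_i\mathbb{P}(\exists\, w\in\mathcal{L}_{-n}:\boldsymbol 0\in\Gamma_w^{\theta}\text{ for some }\theta\in[\theta_i,\theta_{i+1}])$. Each summand is the probability that the origin lies in the depth-$n$ backward coalescence cluster for a window of directions of width $n^{-1/3}$; because such a window has transversal spread $n\cdot n^{-1/3}=n^{2/3}$ at time $-n$, exactly the fluctuation scale, this probability should be comparable to the single-direction estimate $\mathbb{P}(M(\theta_i)>0)=O(n^{-2/3})$. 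Summing over the net then yields $\mathbb{P}(D\ge n)\le\mathbb{P}(E_n)\lesssim K\cdot n^{-2/3}\asymp n^{1/3}\cdot n^{-2/3}=n^{-1/3}$.

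The main obstacle in both directions is the same pair of quantitative KPZ coalescence estimates: the upper bound $M(\theta)=O(n^{2/3})$ on the number of fixed-direction geodesics through a point, and the one-point cluster bound $\mathbb{P}(M(\theta)>0)=O(n^{-2/3})$ together with its comparison across an $n^{-1/3}$-window of directions. Establishing that the windowed event in the upper bound does not exceed the single-direction probability by more than a constant factor—i.e.\ that directions within $n^{-1/3}$ genuinely see the same backward cluster at scale $n$—is the most delicate point, requiring control of the transversal fluctuations of the geodesics $\Gamma_w^{\theta}$ and of their coalescence uniformly over the angle window.
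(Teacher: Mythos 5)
Your architecture is essentially the paper's: for the upper bound, discretize the angle range into $\asymp n^{1/3}$ windows of width $n^{-1/3}$ and show each contributes $O(n^{-2/3})$; for the lower bound, a second-moment argument (you package it as Paley--Zygmund applied to $N_n$, using $\mathbb{E}(N_n)=1$, where the paper runs a Bonferroni inequality over angle windows -- these are equivalent in spirit, and your packaging is arguably cleaner). However, the two estimates you defer to at the end are not side conditions; they are the entire proof, and the justifications you sketch for both are incorrect. First, $\mathbb{P}(M(\theta)>0)=O(n^{-2/3})$ does \emph{not} follow from the reversal identity $\mathbb{E}(M(\theta))=1$ together with $M(\theta)=O(n^{2/3})$: that combination only gives the reverse inequality $\mathbb{P}(M(\theta)>0)\geq cn^{-2/3}$. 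To argue as you do, you would need $M(\theta)\gtrsim n^{2/3}$ \emph{on} the event $\{M(\theta)>0\}$, which is false pointwise (the origin can sit at the edge of a coalescence cluster and be hit by a single geodesic from $\mathcal{L}_{-n}$). The paper instead proves the per-window bound $Cn^{-2/3}$ by translation-invariance averaging over an $n^{2/3}$-window $V$ of target points on $\mathcal{L}_0$, reducing it to $\mathbb{E}(\widehat{D_i})=O(1)$, which in turn requires the stretched-exponential transversal-fluctuation bound (Proposition \ref{transversal_fluctuation_of_semi_infinite_geodesic}) to localize the sources and the coalescence bound on the number of equivalence classes (Proposition \ref{coalescence_theorem}) to control the count. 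Some input of this kind is unavoidable, and it is also what you would need to make rigorous your assertion that an $n^{-1/3}$-window of directions ``sees the same backward cluster'' as a single direction. Note also that $M(\theta)\leq Cn^{2/3}$ holds only up to stretched-exponential tails, which must be folded into any bound of the form $\mathbb{E}(M(\theta)M(\theta'))\lesssim n^{4/3}\,\mathbb{P}(M(\theta)>0,\,M(\theta')>0)$.

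The more serious problem is in the lower bound: the claim that $\{M(\theta)>0\}$ and $\{M(\theta')>0\}$ decorrelate once $|\theta-\theta'|\gg n^{-1/3}$ is false. The paper's estimate \eqref{second_sum_upper_bound} shows that at angular separation $kn^{-1/3}$ the joint probability is of order $k^{-2}(\log k)\,n^{-2/3}$, which exceeds the product of the marginals $n^{-4/3}$ by a factor as large as $n^{2/3}$ for small $k$; genuine near-independence only sets in at macroscopic angular separation. What rescues the second moment is not decorrelation but the summability of $k^{-2}\log k$ over $k$, so that the off-diagonal contribution to $\mathbb{E}(W^2)$ is still $O(n^{1/3})$; your final numerology happens to land on the right exponent, but the mechanism you invoke is wrong, and proving the actual $k^{-2}\log k$ decay is the bulk of the paper's lower-bound argument (the parallelogram averaging, the localization events $\mathcal{F}_i,\mathcal{G}_i$, and the intersection-size lemmas of \cite{BBB23}). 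In addition, Paley--Zygmund only pays off if the first moment is not dominated by rare large values, which here is exactly the statement that $\mathbb{E}(N_n^2)\leq Cn^{1/3}$; as it stands your proposal assumes the two hardest estimates rather than proving them.
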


Finally, to investigate the distance to the nearest busy road, we define
\begin{displaymath}
    T_n:=\min \{ |\psi(v)| : v \in \mathcal{L}_0, N_v \geq n^{4/3} \}.
\end{displaymath}
We have the following theorem which is direct consequence of Theorem \ref{number of cars upper bound}.
\begin{theorem}
\label{high_probability_event}
    For $\delta>0$ sufficiently small (depending on $\varepsilon$) we have that there exists constant $C>0$ (depending on $\varepsilon$) and  for sufficiently large $n$ (depending on $\varepsilon$)
    \begin{displaymath}
        \mathbb{P}\left (T_n \geq \frac{ \delta n^{1/3}}{\log n} \right) \geq 1-C\delta.
    \end{displaymath}
    In particular, this implies there exists $c$ (depending on $\varepsilon$) such that
    \[
    \mathbb{E}\left(T_n \right) \geq \frac{cn^{1/3}}{\log n}.
    \]
\end{theorem}
\begin{proof}The proof follows from a union bound. Let $V$ denote the line segment with $\lceil{\frac{\delta n^{1/3}}{ \log n}}\rceil$ many vertices and with midpoint $\boldsymbol{0}$ on the line $\mathcal{L}_0$. Then 
\begin{displaymath}
    \left \{T_n \leq \frac{\delta n^{1/3}}{ \log n} \right \} \subset \bigcup_{v \in V} \left \{N_v \geq n^{4/3} \right \}.
\end{displaymath}
Hence, by Theorem \ref{number of cars upper bound} upper bound we have for some constant $C>0$ (depending on $\varepsilon$)
\begin{displaymath}
    \mathbb{P}\left (T_n \leq \frac{\delta n^{1/3}}{\log n} \right) \leq C \delta.
\end{displaymath}
Choosing $C \delta <1$ we get Theorem \ref{high_probability_event}.
\end{proof}
The above theorem says that, with high probability, to see a vertex having $n^{4/3}$ many cars through it, we need to go at least sufficiently small multiple of $\frac{n^{1/3}}{\log n}$ distance from origin. The following theorem shows that, with non-vanishing probability, there is at least one vertex within $n^{1/3}$ distance around the origin that has $n^{4/3}$ many cars through it. Precisely, for a fixed constant $\ell_0$ we make a slight modification to the definition of $T_n$ and call the new variable $T^{\ell_0}_n$.
\begin{displaymath}
 T^{\ell_0}_n:=\min \left \{ |\psi(v)| : v \in \mathcal{L}_0, N_v \geq \frac{n^{4/3}}{\ell_0} \right \}.
\end{displaymath}
We then have the following theorem. 
\begin{theorem}
\label{road_lower_bound}
There exists $c>0, \ell_0$ (depending on $\varepsilon$) such that for sufficiently large $n$ (depending on $\varepsilon$) we have
\begin{displaymath}
 \mathbb{P}(T^{\ell_0}_n\leq n^{1/3}) \geq c.
\end{displaymath}
\end{theorem}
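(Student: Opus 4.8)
The plan is to prove the statement by a second moment (Paley--Zygmund) argument applied to the number of busy vertices in the window. Fix the set $W:=\{v\in\mathcal{L}_0:\ |\psi(v)|\le n^{1/3}\}$, which contains $\asymp n^{1/3}$ vertices, put $t:=n^{4/3}/\ell_0$, and let
\begin{displaymath}
X:=\sum_{v\in W}\mathbbm{1}\{N_v\ge t\},\qquad\text{so that}\qquad\{T_n\le n^{1/3}\}=\{X\ge 1\}.
\end{displaymath}
It then suffices to produce a constant lower bound on $\mathbb{P}(X\ge 1)$, for which the Paley--Zygmund bound $\mathbb{P}(X\ge 1)\ge(\mathbb{E}X)^2/\mathbb{E}(X^2)$ is the natural tool.

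First I would handle the first moment. By translation invariance $\mathbb{P}(N_v\ge t)=\mathbb{P}(N\ge t)$ for every $v$, and writing $t=(n\ell_0^{-3/4})^{4/3}$ and invoking the lower bound of Theorem \ref{number of cars upper bound} (with $n$ replaced by $n\ell_0^{-3/4}$, which is large once $n$ is large) gives $\mathbb{P}(N\ge t)\ge c_1\ell_0^{1/4}n^{-1/3}$. Summing over the $\asymp n^{1/3}$ vertices of $W$ yields $\mathbb{E}X\ge c\,\ell_0^{1/4}$. The role of the parameter $\ell_0$ is precisely to inflate this first moment: by choosing $\ell_0$ large we can make $\mathbb{E}X$ an arbitrarily large constant, which is what will let Paley--Zygmund close despite the unavoidable diagonal contribution to the second moment.

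The heart of the proof, and what I expect to be the main obstacle, is the second moment estimate. Expanding,
\begin{displaymath}
\mathbb{E}(X^2)=\mathbb{E}X+\sum_{\substack{v,w\in W\\ v\ne w}}\mathbb{P}(N_v\ge t,\,N_w\ge t),
\end{displaymath}
the goal is $\mathbb{E}(X^2)\le \mathbb{E}X+C(\mathbb{E}X)^2$, which by Paley--Zygmund gives $\mathbb{P}(X\ge1)\ge \mathbb{E}X/(1+C\,\mathbb{E}X)\ge 1/(2C)$ once $\ell_0$ (hence $\mathbb{E}X$) is large. This reduces to the decorrelation bound $\mathbb{P}(N_v\ge t,\,N_w\ge t)\le C\,\mathbb{P}(N_v\ge t)\,\mathbb{P}(N_w\ge t)$ for $v,w\in W$, uniformly in $n$. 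The trivial bound $\mathbb{P}(N_v\ge t,N_w\ge t)\le\mathbb{P}(N_v\ge t)$ is useless here (summing it returns only $\mathbb{E}X/|W|\to 0$), so genuine decorrelation across the window must be established. The difficulty is real: the environments governing $\{N_v\ge t\}$ and $\{N_w\ge t\}$ overlap substantially, since both busy roads are fed by cars emanating from a common far region of width $\asymp n^{2/3}\gg n^{1/3}$, so positive correlations cannot simply be wished away. The saving feature I would exploit is that, because every semi-infinite geodesic crosses $\mathcal{L}_0$ in exactly one vertex, busy roads through distinct vertices draw on \emph{disjoint} car populations, and hence on disjoint catchment portions of the far environment. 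Turning this disjointness-of-catchments picture, together with the transversal fluctuation exponent and coalescence of geodesics on scale $n^{2/3}$, into the quantitative product bound above is the crux of the argument.

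Granting the decorrelation estimate, the conclusion is immediate: Paley--Zygmund with the first and second moment bounds yields $\mathbb{P}(T_n\le n^{1/3})=\mathbb{P}(X\ge1)\ge c$ for some $c=c(\epsilon)>0$ and all large $n$, with $\ell_0=\ell_0(\epsilon)$ fixed large enough. As a sanity check on the scaling, the expected total traffic crossing $W$ from distance up to $\asymp n$ equals $|W|\cdot\sum_{m\le n}\mathbb{E}(N_m)\asymp n^{1/3}\cdot n=n^{4/3}$ by Theorem \ref{expectation_1 theorem}; it is coalescence concentrating this $n^{4/3}$ worth of cars onto only $O(\ell_0)$ crossing vertices of $W$ — rather than spreading it over all $\asymp n^{1/3}$ vertices, which would give a mere $\asymp n$ per vertex — that underlies a single vertex carrying $\gtrsim n^{4/3}/\ell_0$ cars.
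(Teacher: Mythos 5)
Your overall architecture --- a second-moment method over a family of events whose union is $\{T_n\le n^{1/3}\}$, with the parameter $\ell_0$ used to inflate the first moment --- is the same in spirit as the paper's proof, and your first-moment step (translation invariance plus the lower bound of Theorem \ref{number of cars upper bound} applied at scale $n\ell_0^{-3/4}$) is sound. The problem is that the entire proof rests on the cross-term estimate $\mathbb{P}(N_v\ge t,\,N_w\ge t)\le C\,\mathbb{P}(N_v\ge t)\,\mathbb{P}(N_w\ge t)$ uniformly over $v,w\in W$, which you explicitly do not prove and only motivate by a ``disjoint catchments'' heuristic. That heuristic does not decouple anything: the car populations feeding $v$ and $w$ are disjoint as sets of starting points, but which population a given car belongs to is determined by the \emph{shared} weight environment, and the events $\{N_v\ge t\}$ and $\{N_w\ge t\}$ are both driven by the coalescence structure of geodesics in the same $n\times n^{2/3}$ region. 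No spatial decorrelation estimate of this kind is available in the paper or in the cited literature, and establishing one (with the required uniformity down to $|v-w|=O(1)$, or at least with summable decay in $|v-w|$ over the window of width $n^{1/3}$) is a substantial open step, not a routine verification. As written, the proposal is a reduction of the theorem to an unproved and arguably harder statement.

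The paper avoids this issue by choosing a different index set for the second-moment argument: instead of summing indicators over spatial locations $v\in W$, it sums over $\asymp n^{1/3}/M$ well-separated \emph{angular sectors} $A_i$, defining $\mathcal{K}_i$ to be the event that some vertex of the window carries $n^{4/3}/\ell_0$ cars whose chosen angles lie in $A_i$, and applying Bonferroni. The diagonal terms are handled by an averaging argument over disjoint blocks together with the explicit positive-probability construction $\mathsf{TF}_i\cap\{E_i\le\ell_0\}\cap\{Y_n-\tfrac{n^{4/3}}{4}\ge 0\}$ from the proof of the lower bound of Theorem \ref{number of cars upper bound}, giving $\mathbb{P}(\mathcal{K}_i)\ge c/(Mn^{1/3})$. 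The cross terms $\mathbb{P}(\mathcal{K}_i\cap\mathcal{K}_j)$ are then controlled by reducing to the events $\mathcal{M}_i$ and reusing the bound $\mathbb{E}(N_{i,j})\le Cn\log|k_{i,j}|/k_{i,j}^3$ from \eqref{expected_number_inside_parallelogram_bound}, which rests on the coalescence and intersection-size estimates of \cite{BBB23}; the resulting $|i-j|^{-2}\log|i-j|$ decay is summable, and taking $M$ large makes the first sum dominate. In short, angular separation is the quantity for which the available tools give quantitative decorrelation, whereas the spatial decorrelation your argument requires is precisely what is missing. To repair your proof you would either need to supply that spatial estimate or reorganise the decomposition along the lines of the paper.
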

We remark that a constant could also be factored in front of \(n^{1/3}\) inside the probability. For brevity, we do not carry this out throughout our proofs.

The rest of the paper is devoted to proving these theorems and to compare the latter two to actual road traffic statistics.

\section{Bounds for the furthest distance a car can come from}
Recall that \begin{displaymath}
    D:=\max\{n \in \mathbb{N}: \exists u \in \mathcal{L}_{-n} \text{ such that } \boldsymbol{0} \in \Gamma_u^{\theta_u} \}.
\end{displaymath}
\label{bounds for the furthest distance a car can come from}
\subsection{Proof of Theorem \ref{depth_bounds_theorem} upper bound} First we divide the interval $(\varepsilon, \frac{\pi}{2}-\varepsilon)$ into disjoint sub-intervals. Let $c_n=\frac{n^{1/3}}{\lfloor n^{1/3} \rfloor}$. Then $\{c_n\}$ is a uniformly bounded sequence. Now, for $1 \leq i \leq \lfloor n^{1/3} \rfloor$ we partition the interval $(\varepsilon, \frac{\pi}{2}-\varepsilon$) into intervals $A_i$ each of equal length $\frac{c_n(\frac{\pi}{2}-2 \varepsilon)}{n^{1/3}}$.  For $1 \leq i \leq \lfloor n^{1/3} \rfloor$ we define the following random variables.
\begin{displaymath}
 D_i:=\max\{m \in \mathbb{N}: \exists u \in \mathcal{L}_{-m} \text{ such that } \theta_u \in A_i \text{ and } \boldsymbol{0} \in \Gamma_u^{\theta_u} \}.
\end{displaymath}
Clearly, we have
\begin{displaymath}
    \{D \geq n\} \subset \bigcup_{i=1}^{\lfloor n^{1/3} \rfloor} \{D_i \geq n\}.
\end{displaymath}
So, we have 
\begin{equation}
\label{union_upper_bound}
    \mathbb{P}(D \geq n) \leq \sum_{i=1}^{\lfloor n^{1/3} \rfloor} \mathbb{P}(D_i \geq n).
\end{equation}
We fix $1 \leq i \leq \lfloor n^{1/3} \rfloor.$ We want to find an upper bound for $\mathbb{P}(D_i \geq n).$ We will apply an averaging argument. Let $V$ denote the line segment with $\lceil n^{2/3} \rceil$ vertices with $\mathcal{L}_0$ with midpoint $\boldsymbol{0}.$ For $v \in V$ we define the following random variables. 
\begin{displaymath}
 D_i^v:=\max\{m \in \mathbb{N}: \exists u \in \mathcal{L}_{-n} \text{ such that } \theta_u \in A_i \text{ and } v \in \Gamma_u^{\theta_u} \}.
\end{displaymath}
Clearly by the translation invariance of the model, for all $v \in V$ we have 
\begin{displaymath}
    \mathbb{P}(D_i \geq n)=\mathbb{P}(D_i^v \geq n).
\end{displaymath}
Hence,
\begin{equation}
\label{depth_averaging}
    \lceil n^{2/3} \rceil \mathbb{P}(D_i \geq n) = \sum_{v \in V}\mathbb{P}(D_i^v \geq n)=\mathbb{E}(\widehat{D_i}), 
\end{equation}
where $\widehat{D_i}$ is defined as follows.
\begin{displaymath}
    \widehat{D_i}:=\sum_{v \in V}\mathbbm{1}_{\{D_i^v \geq n\}}.
\end{displaymath}
We will show that for sufficiently large $\ell, \mathbb{P}(\widehat{D_i} \geq \ell )$ has a stretched exponential in $\ell$ upper bound. We prove it precisely now. Let $ \alpha_i, \beta_i, \theta_i$ denote the left end point, right end point and midpoint of $A_i$ respectively and $n_i' \in \mathcal{L}_{-n}$ be the intersection point of $\mathcal{L}_{-n}$ and the line $y=(\tan \theta_i) x$. Consider the line segment $\widetilde{V}$ with $\lfloor \ell^{1/96}n^{2/3} \rfloor$ vertices on $\mathcal{L}_{-n}$ with midpoint $n_i'$. Let $v_1,v_2$ be the end points of $\widetilde{V}$ with $\psi(v_1) \leq \psi(v_2).$ Consider $\Gamma_{v_1}^{\alpha_i}$ and $\Gamma_{v_2}^{\beta_i}$ (see Figure \ref{fig: depth_upper_bound_figure}).
\begin{figure}[!ht]
    \includegraphics[width=13 cm]{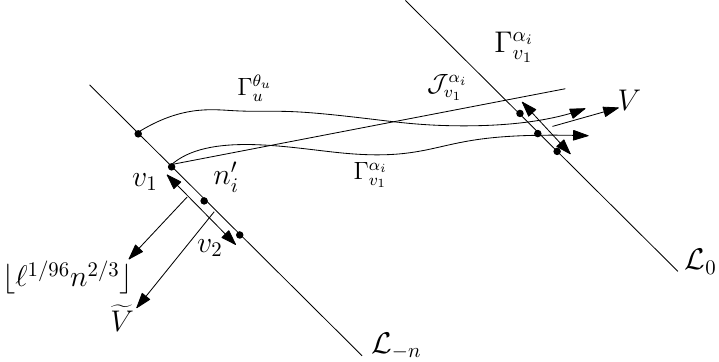}
    \caption{To prove the upper bound in Theorem \ref{depth_bounds_theorem} we fix $1 \leq i \leq \lfloor n^{1/3} \rfloor$. The event that there are more than $\ell$ many $u \in \mathcal{L}_{-n}$ with $\theta_u \in A_i$ and $\Gamma_u^{\theta_u}$ intersecting $V$ can happen in two ways. Either the above $u$ lies in $\mathcal{L}_{-n} \setminus \widetilde{V}$. For sufficiently large $\ell$ this will imply large transversal fluctuation for  either $\Gamma_{v_1}^{\alpha_i}$ or $\Gamma_{v_2}^{\beta_i}$ (the case that $\Gamma_{v_1}^{\alpha_i}$ can have large transversal fluctuation is shown in the figure. If for $u \in \mathcal{L}_{-n} \setminus \widetilde{V}$ as in the figure $, \Gamma_u^{\theta_u}$ intersects $V$ then by planarity of geodesics $\Gamma^{\alpha_i}_{v_i}$ will have large transversal fluctuation from the line $\mathcal{J}^{\alpha_i}_{v_1}$). Proposition \ref{transversal_fluctuation_of_semi_infinite_geodesic} gives a stretched exponential upper bound for this event. The other possibility is there are more than $\ell$ distinct vertices inside on $V$ that carries a geodesic from $\widetilde{V}$. Proposition \ref{coalescence_theorem} gives a stretched exponential upper bound for this event.}
    \label{fig: depth_upper_bound_figure} 
\end{figure}
We define the following event. 
\begin{itemize}
    \item $\mathcal{A}_i:=\{ \exists u \in \mathcal{L}_{-n} \setminus \widetilde{V}$ such that $\theta_u \in A_i$ and $v \in \Gamma_u^{\theta_u}$ for some $v \in V$\}.
\end{itemize}
We have 
\begin{displaymath}
    \mathbb{P}(\widehat{D_i} \geq \ell) \leq \mathbb{P}(\mathcal{A}_i)+\mathbb{P}(\{\widehat{D_i} \geq \ell\} \cap \mathcal{A}_i^c).
\end{displaymath}
Let $\mathcal{J}_{v_1}^{\alpha_i}$ (resp.\ $\mathcal{J}_{v_2}^{\beta_i}$) be the straight line starting from $v_1$ (resp.\ $v_2$) in the direction $\alpha_i$ (resp.\ $\beta_i$). Now, by planarity the event $\mathcal{A}_i$ implies either $\Gamma_{v_1}^{\alpha_i}$ or $\Gamma_{v_2}^{\beta_i}$ will have large traversal fluctuation on $\mathcal{L}_0$ from the lines $\mathcal{J}_{v_1}^{\alpha_i}$ and $\mathcal{J}_{v_2}^{\beta_i}$ respectively. (see Figure \ref{fig: depth_upper_bound_figure}). Since, $(\beta_i-\alpha_i)$ is of order $\frac{1}{n^{1/3}}$, for sufficiently large $\ell$ (depending on $\varepsilon$), the event $\mathcal{A}_i$ will imply that either $\Gamma_{v_1}^{\alpha_i}$ or $\Gamma_{v_2}^{\beta_i}$ will have transversal fluctuation larger than $\frac{\ell^{1/96}n^{2/3}}{4}$. More precisely let $\mathcal{J}_{v_1}^{\alpha_i}(0)$ (resp.\ $\mathcal{J}_{v_2}^{\beta_i}(0)$) denote the intersection point of the corresponding lines with $\mathcal{L}_0$. Similarly let $\Gamma^{\alpha_i}_{v_1}(0)$ (resp.\ $\Gamma^{\beta_i}_{v_2}(0)$) denote the intersection points of the corresponding geodesics with $\mathcal{L}_0.$ Then
\[
\mathcal{A}_i \subset \left \{\Gamma^{\alpha_i}_{v_1}(0)-\mathcal{J}_{v_1}^{\alpha_i}(0) \geq \frac{\ell^{1/96}n^{2/3}}{4} \right \} \bigcup \left \{ \mathcal{J}_{v_2}^{\beta_i}(0)-\Gamma^{\beta_i}_{v_2}(0)\geq \frac{\ell^{1/96}n^{2/3}}{4}\right\}.
\]

By Proposition \ref{transversal_fluctuation_of_semi_infinite_geodesic} we have upper bounds for the events on the right side. For sufficiently large $\ell$ we have 
\begin{displaymath}
    \mathbb{P}(\mathcal{A}_i) \leq Ce^{-c \ell^{3/96}}.
\end{displaymath}
Now, we consider the event $\{\widehat{D_i} \geq \ell\} \cap \mathcal{A}_i^c$. On this event there are at least $\ell$ distinct vertices on $V$ that has a geodesic coming from $\widetilde{V}.$
Using Proposition \ref{coalescence_theorem} we have for sufficiently large $\ell$ and $n$
\begin{displaymath}
    \mathbb{P}(\{\widehat{D_i} \geq \ell\} \cap \mathcal{A}_i^c) \leq Ce^{c \ell^{1/384}}.
\end{displaymath}
Combining all the above we have for $\ell<n^{0.01},n$ sufficiently large we have 
\begin{displaymath}
    \mathbb{P}(\widehat{D_i} \geq \ell) \leq Ce^{-c\ell^{1/384}}.
\end{displaymath}
So, finally we have that there exists $\ell_0$ (depending only on $\varepsilon$) and $\widetilde{C}$ (depending only on $\varepsilon$) such that for all large $n$
\begin{equation}
\label{expectation_calculation}
    \mathbb{E}\left(\widehat{D_i}\right) \leq \sum_{\ell \geq 1}\mathbb{P}(\widehat{D_i} \geq \ell) \leq  \ell_0 + \sum_{\ell_0 \leq \ell < \frac{n^{0.01}}{2}} C e^{-c \ell^{1/384}}+ \sum_{\frac{n^{0.01}}{2} \leq \ell < \lceil n^{2/3} \rceil}Ce^{-c'n^{1/38400}}< \widetilde{C}.
\end{equation}
So, from \eqref{depth_averaging} we have that there exists $C>0$ (depending only on $\varepsilon$) such that for all $1 \leq i \leq \lfloor n^{1/3} \rfloor$ we have
\begin{displaymath}
    \mathbb{P}(D_i \geq n) \leq \frac{C}{n^{2/3}}.
\end{displaymath}
So, from \eqref{union_upper_bound} there exists $C>0$ such that for large $n$
\begin{displaymath}
    \mathbb{P}(D \geq n) \leq \frac{C}{n^{1/3}}.
\end{displaymath}
This proves the upper bound in Theorem \ref{depth_bounds_theorem}. \qed\\
\subsection{Proof of Theorem \ref{depth_bounds_theorem} lower bound} We again consider disjoint sub-intervals of $(\varepsilon, \frac{\pi}{2}-\varepsilon)$. First we fix large constant $M$ which will be chosen later. For $1 \leq i \leq [\frac{n^{1/3}}{2M}]$, we consider the intervals $A_i \subset (\varepsilon, \frac{\pi}{2}-\varepsilon)$, each of length $\frac{(\frac{\pi}{2}-2 \varepsilon)}{n^{1/3}}$ and midpoint $\frac{Mi(\frac{\pi}{2}-2\varepsilon)}{n^{1/3}}$. Note that, unlike the previous case we are not partitioning the interval $(\varepsilon, \frac{\pi}{2}-\varepsilon$) this time. Instead we are taking points which are $\frac{M(\frac{\pi}{2}-2\varepsilon)}{n^{1/3}}$ distance apart from each other and we are taking intervals of length $\frac{(\frac{\pi}{2}-2 \varepsilon)}{n^{1/3}}$ around them. We do this so that the constants that we get at the end depend only on $\varepsilon$ but not on $M$. This will be elaborated later. Corresponding to each $A_i$ we define the following random variables:
\begin{displaymath}
 D_i:=\max \left \{m \in \mathbb{N}: \exists u \in \mathcal{L}_{-n} \text{ such that } \theta_u \in A_i \text{ and } \boldsymbol{0} \in \Gamma_u^{\theta_u} \right\}.
\end{displaymath}
Note that $D_i$ is defined exactly in the same way as we defined in the proof of the upper bound. The only difference is that the definition of the intervals $A_i$ are different now.
Clearly,
\begin{displaymath}
    \bigcup_{i=1}^{[\frac{n^{1/3}}{2M}]} \{ D_i \geq n\} \subset \{D \geq n\}.
\end{displaymath}
So, applying inclusion-exclusion principle and considering the first two terms (see Proposition \ref{pr:bonfe}) we have 
\begin{equation}
\label{inclusion_exclusion_for_depth}
  \mathbb{P}(D \geq n) \geq \sum_{i=1}^{[\frac{n^{1/3}}{2M}]}\mathbb{P}\left (\left \{D_i \geq n \right\}\right)-\sum_{1 \leq i <j \leq [\frac{n^{1/3}}{2M}]} \mathbb{P}\left (\left \{ D_i \geq n \right \} \cap \left \{D_j \geq n \right \} \right).
\end{equation}
For the first sum we do the following. We fix $1 \leq i \leq [\frac{n^{1/3}}{2M}]$. We consider the line segment $V_M$ with $\lceil 4Mn^{2/3} \rceil$ many vertices on $\mathcal{L}_0$ with midpoint $\boldsymbol{0}$ (see Figure \ref{fig: depth_lower_bound_figure}). For $v \in V_M$, we define 
\begin{displaymath}
 D_i^v:=\max \left \{m \in \mathbb{N}: \exists u \in \mathcal{L}_{-n} \text{ such that } \theta_u \in A_i \text{ and } v \in \Gamma_u^{\theta_u} \right \}.
\end{displaymath}
\begin{figure}[!ht]
    \includegraphics[width=13 cm]{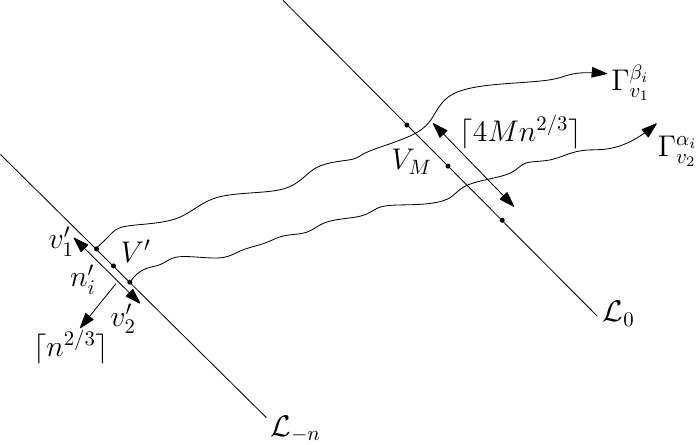}
    \caption{To prove the lower bound for the first sum in \eqref{inclusion_exclusion_for_depth} we fix an $i$. Using Proposition \ref{transversal_fluctuation_of_semi_infinite_geodesic} we can choose $M$ large enough so that on a positive probability event $\Gamma_{v_1'}^{\beta_i}$ and $\Gamma_{v_2'}^{\alpha_i}$ intersect $\mathcal{L}_0$ on $V_M$. Further, we consider an independent event on which there is at least one vertex on $V'$ that chooses an angle in $A_i$. Due to central limit theorem the later event is a positive probability event. Hence, the intersection of these two events is a positive probability event and on this intersection $\widehat{D}_i \geq 1$.}
    \label{fig: depth_lower_bound_figure} 
\end{figure}
So, same as before we have
\begin{equation}
\label{averaging_for_lower_bound}
    \lceil 4Mn^{2/3} \rceil \mathbb{P}(D_i \geq n)=\sum_{v \in V_M}\mathbb{P}(D_i^v \geq n)=\mathbb{E}(\widehat{D_i}),
\end{equation}
where $\widehat{D_i}$ is defined as 
\begin{displaymath}
    \widehat{D_i}:=\sum_{v \in V_M}\mathbbm{1}_{\{D_i^v \geq n\}}.
\end{displaymath}
We will show on a positive probability event $\widehat{D_i} \geq 1$. We construct this event now. Consider $V'$, a line segment with $\lceil n^{2/3} \rceil$ many vertices with midpoint $n_i'$ on $\mathcal{L}_{-n}$ and let $v_1',v_2'$ denote the end points of $V'$ with $\psi(v_1') \leq \psi(v_2').$ Consider the geodesics $\Gamma_{v_1'}^{\beta_i}$ and $\Gamma_{v_2'}^{\alpha_i}$.
We define the following event.
\begin{displaymath}
    \mathcal{C}_i:=\left \{ \Gamma_{v_1'}^{\beta_i}(0) \in V_M \text{ and } \Gamma_{v_2'}^{\alpha_i}(0) \in V_M \right \}.
\end{displaymath}
Again as $(\beta_i-\alpha_i)$ is of order $\frac{1}{n^{1/3}},$ the complement of the event $\mathcal{C}_i$ will imply that either $\Gamma_{v_1'}^{\beta_i}$ or $\Gamma_{v_2'}^{\alpha_i}$ will have transversal fluctuation more than $\frac{M}{2}n^{2/3}$. So, using Proposition \ref{transversal_fluctuation_of_semi_infinite_geodesic} we can choose $M$ large enough so that 
\begin{displaymath}
    \mathbb{P}(\mathcal{C}_i) \geq 0.99.
\end{displaymath}
Further, we consider the following event.
\begin{displaymath}
    \mathcal{D}_i:=\{\exists u \in V' \text{ such that } \theta_u \in A_i\}=\left \{\sum_{u \in V'}\mathbbm{1}_{\{\theta_u \in A_i\}} \geq 1 \right \}.
\end{displaymath}
So, we consider the random variable $X_i:=\sum_{u \in V'}\mathbbm{1}_{\{\theta_u \in A_i\}}.$ We have $\mathbb{E}(X_i)=\frac{\lceil n^{2/3} \rceil}{ n^{1/3}}$ and $\Vv(X_i)=(\frac{1}{n^{1/3}}-\frac{1}{n^{2/3}})\lceil n^{2/3} \rceil.$ By the Berry-Esseen inequality (Proposition \ref{p: Berry-Essen}) we have for all $n$ there exists a constant $c>0$ such that
\begin{displaymath}
    \left \vert\mathbb{P}(X_i-\mathbb{E}(X_i)\geq 0)-\frac{1}{2} \right \vert \leq \frac{c}{(n^{1/3}-1)^{3/2}n^{1/3}}.
\end{displaymath}
In particular, note that in this case the i.i.d. random variables in Proposition \ref{p: Berry-Essen} are $\mathbbm{1}_{\{\theta_u \in A_i\}}$ and we take $x$ in the statement of Proposition \ref{p: Berry-Essen} to be $0.$
Hence, for sufficiently large $n$ we have 
\begin{displaymath}
    \mathbb{P}(X_n-\mathbb{E}(X_n)\geq 0) \geq \frac{1}{4}.
\end{displaymath}
So, for sufficiently large $n$ we have
\begin{displaymath}
    \mathbb{P}(\mathcal{D}_i) \geq \frac{1}{4}.
\end{displaymath}
Finally, note that the events $\mathcal{C}_i$ and $\mathcal{D}_i$ are independent. So,
\begin{displaymath}
    \mathbb{P}(\mathcal{C}_i \cap \mathcal{D}_i) \geq \frac{0.99}{4}.
\end{displaymath}
Clearly, by planarity of geodesics (see Section \ref{Model Definitions and Notations} for definition), on the event $\mathcal{C}_i \cap \mathcal{D}_i, \widehat{D_i} \geq 1.$ So, from \eqref{averaging_for_lower_bound} we have that there exists $c_1>0$ such that
\begin{equation}
\label{first_sum_lower_bound_depth}
    \mathbb{P}(D_i \geq n) \geq \frac{c_1}{Mn^{2/3}}.
\end{equation}
Now we consider the second sum in \eqref{inclusion_exclusion_for_depth}. We fix $1\leq i <j \leq [\frac{n^{1/3}}{2M}],$  both large enough. For $1 \leq i \leq [\frac{n^{1/3}}{2M}]$ we define the following events.
\begin{displaymath}
    \mathcal{E}_i:=\{ \exists u \in \mathbb{Z}^2, \gamma \in A_i \text{ such that } \phi(u)=-n, \text{ and } \boldsymbol{0} \in \Gamma_u^{\gamma}\}.
\end{displaymath}
Observe that for all $1 \leq i \leq [\frac{n^{1/3}}{2M}]$
\begin{displaymath}
    \{D_i \geq n\} \subset \mathcal{E}_i.
\end{displaymath}
From now on we will work with $\mathcal{E}_i$'s. 
Consider the points $n_i',n_j' \in \mathcal{L}_{-n}$ defined as before. We have $|n_i'-n_j'|$ is of order $M|i-j|n^{2/3}.$ For simplicity let $k_{i,j}=M|i-j|$. We apply again an averaging argument. We fix $i$ and $j$. Same as before we define $\theta_i$ and $\theta_j$ to be the midpoints of the intervals $A_i$ and $A_j$. We define parallelograms $W_{i,j}$ depending on $\theta_i$ and $\theta_j$ as follows. Let both $\theta_i>\frac \pi4$ and $\theta_j >\frac \pi4$ and $\theta_i>\theta_j$. Then consider the parallelogram $W_{i,j}$ whose two sides lie on the lines $\mathcal{L}_{-\lceil\frac{n}{100k_{i,j}}\rceil}$ and $\mathcal{L}_{\lceil\frac{n}{100k_{i,j}}\rceil}$ each of length $n^{2/3}$ with midpoints $v_1', v_2'$ where $v_1'$ and $v_2'$ are the intersection points of the line $y=\tan(\theta_j)x$ with $\mathcal{L}_{-\lceil\frac{n}{100k_{i,j}}\rceil}$ and $\mathcal{L}_{\lceil\frac{n}{100k_{i,j}}\rceil}$ respectively. Now if $\theta_i<\frac \pi4$ and $\theta_j <\frac \pi4$ and $\theta_i>\theta_j$ then define the parallelogram $W_{i,j}$ exactly same as above with the only difference that the midpoints $v_1', v_2'$ are the intersection points of the line $y=\tan(\theta_i)x$ with $\mathcal{L}_{-\lceil\frac{n}{100k_{i,j}}\rceil}$ and $\mathcal{L}_{\lceil\frac{n}{100k_{i,j}}\rceil}$ respectively. Finally, if $\theta_i<\frac \pi 4<\theta_j$ then define $W_{i,j}$ exactly same as above with the midpoints $v_1', v_2'$ are the intersection points of the line $y=x$ with $\mathcal{L}_{-\lceil\frac{n}{100k_{i,j}}\rceil}$ and $\mathcal{L}_{\lceil\frac{n}{100k_{i,j}}\rceil}$ respectively (see Figures \ref{fig: second_sum_figure}, \ref{fig: second_sum_figure_1}).
Then for all $w \in W_{i,j}$ define the following events.
\begin{displaymath}
    \mathcal{E}_i^w:=\{ \exists u \in \mathbb{Z}^2 , \gamma \in A_i \text{ such that } \phi(u)=-n+\phi(w), \text{ and } w \in \Gamma_u^{\gamma}\}.
\end{displaymath}
Similarly we define $\mathcal{E}^w_j$.
We have for all $w \in W_{i,j}$
\begin{displaymath}
    \mathbb{P}(\mathcal{E}_i \cap \mathcal{E}_j)=\mathbb{P}(\mathcal{E}_i^w \cap \mathcal{E}_j^w).
\end{displaymath}
Hence,
\begin{equation}
\label{averaging}
    \frac{cn^{5/3}}{100k_{i,j}}\mathbb{P}(\mathcal{E}_i \cap \mathcal{E}_j)\le\sum_{w \in W}\mathbb{P}(\mathcal{E}_i^w \cap \mathcal{E}_j^w)=\mathbb{E}\left(\sum_{w \in W} \mathbbm{1}_{\mathcal{E}_i^w \cap \mathcal{E}_j^w}\right),
\end{equation}
where the number of vertices in $W_{i,j}$ is at least $\frac{c n^{5/3}}{100k_{i,j}}$ for all large \(n\).
Let us define the following random variable.
\begin{itemize}
    \item $N_{i,j}$ is number of $w \in W_{i,j}$ such that there exist $u_i,u_j \in \mathbb{Z}^2$ and $\gamma_i \in A_i$ and $\gamma_j \in A_j$ with $|\phi(u_i)+n| \leq \lceil \frac{n}{100k_{i,j}} \rceil$ and $|\phi(u_j)+n| \leq \lceil \frac{n}{100k_{i,j}} \rceil$ and $w \in \Gamma_{u_i}^{\gamma_i} \cap \Gamma_{u_j}^{\gamma_j}.$
\end{itemize}
We have
\begin{displaymath}
 \sum_{w \in W_{i,j}} \mathbbm{1}_{\mathcal{E}_i^w \cap \mathcal{E}_j^w} \leq N_{i,j}.
\end{displaymath}
We will show that there exists a constant $C>0$ (depending only on $\varepsilon$) such that for all $n$ sufficiently large
\begin{equation}
\label{expected_number_inside_parallelogram_bound}
    \mathbb{E}(N_{i,j}) \leq \frac{C n \log |k_{i,j}|}{k_{i,j}^3}.
\end{equation}
We define two equivalence classes. For $u_1, u_2 \in \mathbb{Z}^2$ and $\gamma^1_i, \gamma_i^2 \in A_i$ with $|\phi(u_1)+n| \leq \lceil \frac{n}{100k_{i,j}} \rceil$ and $|\phi(u_2)+n| \leq \lceil \frac{n}{100k_{i,j}} \rceil$ we say $(u_1,\gamma^1_i) \sim (u_2,\gamma^2_i)$ if $\Gamma_{u_1}^{\gamma^1_{i}}$ and $\Gamma_{u_2}^{\gamma^2_i}$ coincide inside $W_{i,j}.$ Let $\widetilde{E_i}$ denote the number of equivalence classes. Similarly, we define the equivalence classes corresponding to $A_j$ and denote it by $\widetilde{E_j}.$ Further for $u, v \in \mathbb{Z}^2$ with $|\phi(u)+n| \leq \lceil \frac{n}{100k_{i,j}} \rceil, |\phi(v)+n| \leq \lceil \frac{n}{100k_{i,j}} \rceil,\gamma_i$ (resp.\ $\gamma_j$) contained in $A_i$ (resp.\ $A_j$) let $I_{u,v, \gamma_,\gamma_j}$ denote the intersection size of $\Gamma_{u}^{\gamma_i}$ and $\Gamma_v^{\gamma_j}$ inside $W_{i,j}$.
Clearly,
\begin{displaymath}
    N_{i,j} \leq \widetilde{E_i}\widetilde{E_j} \max I_{u,v, \gamma_i, \gamma_j},
\end{displaymath}
where the maximum is taken over all $u,v \in \mathbb{Z}^2$ with $|\phi(u)+n| \leq \lceil \frac{n}{100k_{i,j}} \rceil$ and $|\phi(v)+n| \leq \lceil \frac{n}{100k_{i,j}} \rceil$ and $\gamma_i$ (resp.\ $\gamma_j$) lies in $A_i$ (resp.\ $A_j$). So, for $\ell \geq 1$
\[
\label{union_bound}
    \mathbb{P}\left(N_{i,j} \geq \frac{\ell n \log |k_{i,j}|}{k_{i,j}^3}\right)
    \leq \mathbb{P}(\widetilde{E_i} \geq \ell^{1/3})+ \mathbb{P}(\widetilde{E_j} \geq \ell^{1/3})+ \mathbb{P}\left(\max I_{u,v, \gamma_i,\gamma_j} \geq \frac{\ell^{1/3} n \log |k_{i,j}|}{k_{i,j}^3}\right).
\]
We start with an upper bound for the first term. Let us consider the following parallelogram (see Figures \ref{fig: second_sum_figure}, \ref{fig: second_sum_figure_1}).
\begin{itemize}
 \item  $\mathsf{R}_i^{\ell}$ is the parallelogram whose opposite sides lie on $\mathcal{L}_{-n-\lceil \frac{n}{100k_{i,j}}\rceil}$ (resp.\ $\mathcal{L}_{-n+\lceil \frac{n}{100k_{i,j}}\rceil}$), each of length $\ell^{1/96}n^{2/3}$ with midpoints being the intersection points of $y=\tan(\theta_i)x$ with $\mathcal{L}_{-n-\lceil \frac{n}{100k_{i,j}}\rceil}$ (resp.\ $\mathcal{L}_{-n+\lceil \frac{n}{100k_{i,j}}\rceil}$).
\end{itemize}
Similarly we define $\mathsf{R}_j^\ell$ by replacing $\theta_i$ with $\theta_j.$
\begin{figure}[!ht]
    \includegraphics[width=13 cm]{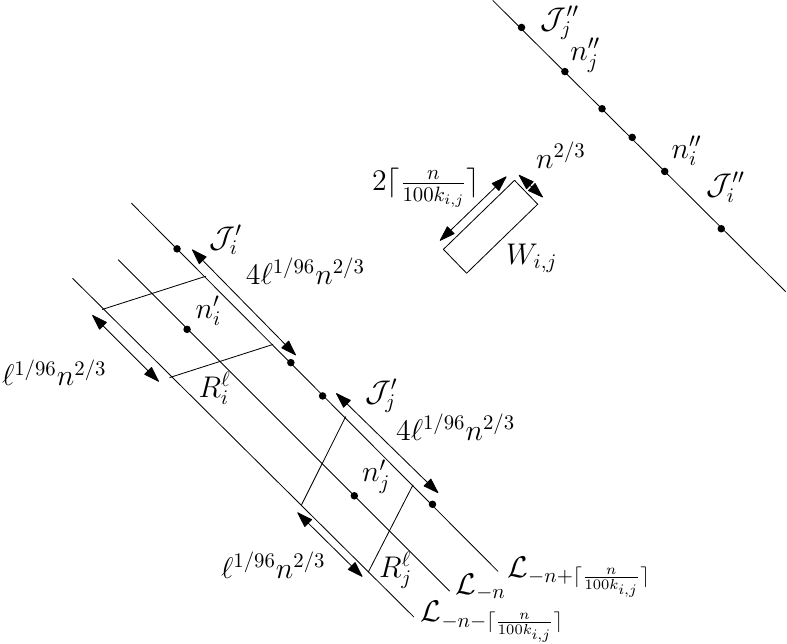}
    \caption{To find an upper bound for the second sum in \eqref{inclusion_exclusion} we fix $1 \leq i < j \leq [\frac{n^{1/3}}{2M}]$. Due to planarity and large transversal fluctuation there is a very small probability that there will be vertices $u$ (resp.\ $v$) outside $\mathsf{R}_i^{\ell}$ (resp.\ $\mathsf{R}_j^{\ell}$) with $|\phi(u)+n| \leq \lceil \frac{n}{100 k_{i,j}} \rceil$ (resp.\ $|\phi(v)+n| \leq \lceil \frac{n}{100 k_{i,j}} \rceil$) and $\gamma_i \in A_i$ (resp.\ $\gamma_j \in A_j$) such that $\Gamma_u^{\gamma_i}$ (resp.\ $\Gamma_v^{\gamma_j}$)  will intersect $W_{i,j}$. These are the events $\mathcal{F}_i$ and $\mathcal{F}_j$. Further, the same argument as in \cite[Lemma 3.1]{BBB23} shows that, there is a very small probability that for any geodesic starting from $\mathsf{R}_i^{\ell}$(resp.\ $\mathsf{R}_j^{\ell}$) and taking angle in $A_i$ (resp.\ $A_j$) will not intersect either $\mathcal{J}_i'$ or $\mathcal{J}_i''$ (resp.\ either $\mathcal{J}_j'$ or $\mathcal{J}_j''$). These are the events $\mathcal{G}_i$ and $\mathcal{G}_j.$ Further, due to Proposition \ref{coalescence_theorem}, we have with very small probability that there are more than $\ell^{1/3}$ distinct vertices in $W_{i,j}$ that have a geodesic starting from $\mathcal{J}_i'$ (resp.\ $\mathcal{J}_j'$) and ending at $\mathcal{J}_i''$ (resp.\ $\mathcal{J}_j''$). Finally, we invoke \cite[Lemma 3.2, Lemma 3.3]{BBB23} to conclude that with exponentially small probability any geodesic starting from $\mathcal{J}_i'$ (resp.\ $\mathcal{J}_j'$) and ending at $\mathcal{J}_i''$ (resp.\ $\mathcal{J}_j''$) will have more than $\frac{\ell n}{k_{i,j}^3}$ intersection size inside $W_{i,j}$. Combining all the above we get stretched exponential upper bound for each terms in \eqref{union_bound}.}
    \label{fig: second_sum_figure} 
\end{figure}

Let us consider the following two line segments $\mathcal{J}_i'$ (resp.\ $\mathcal{J}_i''$) with $4\ell^{1/96}n^{2/3}$ many vertices on $\mathcal{L}_{-n+\lceil \frac{n}{100k_{i,j}} \rceil}$ (resp.\ $\mathcal{L}_{n}$) with midpoints the intersection point of $y=\tan(\theta_i)x$ and $\mathcal{L}_{-n+\lceil \frac{n}{100k_{i,j}}\rceil}$ (resp.\ $n_i''$). Let us define following two events (see Figures \ref{fig: second_sum_figure}, \ref{fig: second_sum_figure_1}).
\begin{itemize}
 \item $\mathcal{F}_i:=\{ \exists u\notin\mathsf{R}_i^{\ell}\text{ and } \exists \gamma_i \in A_i: |\phi(u)+n| \leq \lceil \frac{n}{100 k_{i,j}} \rceil, \Gamma_u^{\gamma_i}$ intersects $W_{i,j}\}$.
    \item $\mathcal{G}_i:=\{\exists u \in \mathsf{R}_i^{\ell} \text{ and } \gamma_i \in A_i: \Gamma_u^{\gamma_i}$ does not intersect either $\mathcal{J}_i'$ or $\mathcal{J}_i''\}$.
\end{itemize}
Similarly, we define $\mathcal{F}_j, \mathcal{G}_j$ with $\theta_j$ in place of $\theta_i.$ We have the following lemma.
\begin{lemma}
\label{lemma: corrected lemma}There exist $C,c>0$ (depending only on $\varepsilon$) such that for sufficiently large $\ell$ and $n$
\begin{displaymath}
    \mathbb{P}(\mathcal{F}_i \cup \mathcal{F}_j) \leq Ce^{-c \ell^{3/96}}.
\end{displaymath}
\end{lemma}
\begin{figure}[!ht]
    \includegraphics[width=13 cm]{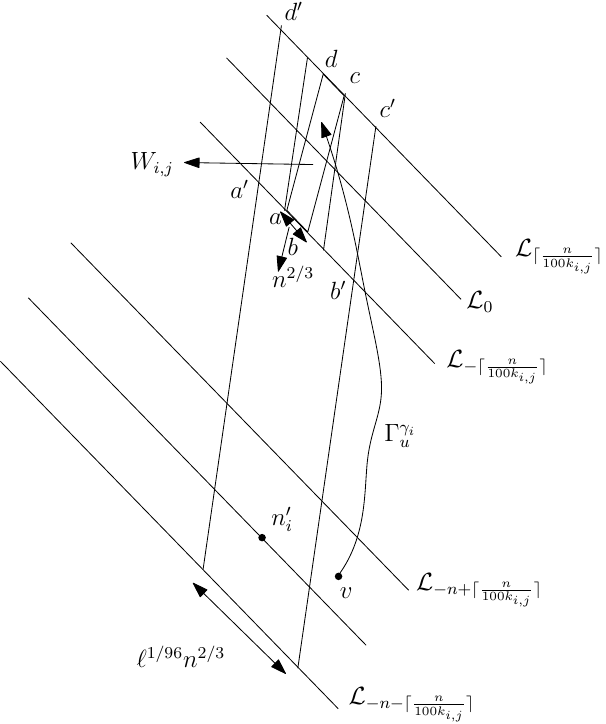}
    \caption{The figure shows how to obtain an upper bound for $\mathbb{P}(\mathcal{F}_i)$ when $\theta_i>\theta_j> \frac \pi4.$ Although $W_{i,j}$ is aligned in the direction $\theta_j$, if for some $u \notin \overline{\mathsf{R}_i^\ell}$ and $\gamma_i \in A_i$ intersect $W_{i,j}$ then it will have large transversal fluctuation of order at least $\sim (\ell^{1/96}-1)n^{2/3}$ as shown in the figure. Thus, $\mathbb{P}(\mathcal{F}_i)$ will be exponentially small in $\ell$. The other cases of $\theta_i,\theta_j$ follow similarly.}
    \label{fig: second_sum_figure_1} 
\end{figure}

\begin{proof}
 We assume the case when $\theta_i>\theta_j>\frac{\pi}{4}$. The other cases will follow similarly. The upper bound for $\mathbb{P}(\mathcal{F}_j)$ is immediate from Proposition \ref{transversal_fluctuation_of_semi_infinite_geodesic} as the parallelogram $W_{i,j}$ is aligned in the direction $\theta_j.$ For the event $\mathcal{F}_i,$ first define $\overline{\mathsf{R}_i^{\ell}}$ as the parallelogram whose opposite sides lie on $\mathcal{L}_{-n-\lceil \frac{n}{100k_{i,j}}\rceil}$ (resp.\ $\mathcal{L}_{\lceil \frac{n}{100k_{i,j}}\rceil}$), each of length $\ell^{1/96}n^{2/3}$ with midpoints at the intersections of $y=\tan(\theta_i)x$ and $\mathcal{L}_{-n-\lceil \frac{n}{100k_{i,j}}\rceil}$ (resp.\ $\mathcal{L}_{\lceil \frac{n}{100k_{i,j}}\rceil}$, see Figure \ref{fig: second_sum_figure_1}). Although $W_{i,j}$ is aligned in the direction $\theta_j$ which is different from $\theta_i$, we now show that it is still properly contained in the parallelogram $\overline{\mathsf{R}_i^{\ell}}$ for sufficiently large \(\ell\).

  To see this, it is enough to verify the containment on the lines \(\mathcal L_{\lceil\frac n{100k_{i,j}}\rceil}\) and \(\mathcal L_{-\lceil\frac n{100k_{i,j}}\rceil}\). As these lines have direction \(\frac34\pi\), containment of \(W_{i,j}\) in \(\overline{\mathsf{R}_i^{\ell}}\) is implied by the corresponding ordering of the horizontal coordinates for the intersections of the respective parallelograms with these two lines. We show exactly this below. By definition, the sides of \(W_{i,j}\) on these lines have midpoints that come from the system
  \[
   \left.
    \begin{aligned}
     y&=\tan(\theta_j)x\\
     x+y&=\lceil\frac n{100k_{i,j}}\rceil
    \end{aligned}
   \right\}
   \qquad\text{and}\qquad
   \left.
    \begin{aligned}
     y&=\tan(\theta_j)x\\
     x+y&=-\lceil\frac n{100k_{i,j}}\rceil
    \end{aligned}
   \right\}.
  \]
  That is, the upper side of \(W_{i,j}\), of side length \(n^{2/3}\), spans through horizontal coordinates
  \[
   x\in\Bigl(\frac{\lceil\frac n{100k_{i,j}}\rceil}{1+\tan\theta_j}-\frac1{2\sqrt2}n^{2/3},\,\frac{\lceil\frac n{100k_{i,j}}\rceil}{1+\tan\theta_j}+\frac1{2\sqrt2}n^{2/3}\Bigr)
  \]
  on \(\mathcal L_{\lceil\frac n{100k_{i,j}}\rceil}\), while the lower side spans
  \[
   x\in\Bigl(\frac{-\lceil\frac n{100k_{i,j}}\rceil}{1+\tan\theta_j}-\frac1{2\sqrt2}n^{2/3},\,\frac{-\lceil\frac n{100k_{i,j}}\rceil}{1+\tan\theta_j}+\frac1{2\sqrt2}n^{2/3}\Bigr)
  \]
  on \(\mathcal L_{-\lceil\frac n{100k_{i,j}}\rceil}\). To simplify notation we increase this for all large \(n\) by dropping the constant factors
  on \(n^{2/3}\).

  The parallelogram \(\overline{\mathsf{R}_i^{\ell}}\) has side length at least \(2\sqrt2\ell^{1/97}n^{2/3}\) for large \(\ell\) (i.e., by slightly decreasing the power on \(\ell\) we don't need to carry constant factors below). We then conclude, similarly to how we did for \(W_{i,j}\), that the intersection of \(\overline{\mathsf{R}_i^{\ell}}\) with the line \(\mathcal L_{\lceil\frac n{100k_{i,j}}\rceil}\) contains the horizontal coordinates
  \[
   x\in\Bigl(\frac{\lceil\frac n{100k_{i,j}}\rceil}{1+\tan\theta_i}-\ell^{1/97}n^{2/3},\,\frac{\lceil\frac n{100k_{i,j}}\rceil}{1+\tan\theta_i}+\ell^{1/97}n^{2/3}\Bigr)
  \]
  and, with the line \(\mathcal L_{-\lceil\frac n{100k_{i,j}}\rceil}\), the horizontal coordinates
  \[
   x\in\Bigl(\frac{-\lceil\frac n{100k_{i,j}}\rceil}{1+\tan\theta_i}-\ell^{1/97}n^{2/3},\,\frac{-\lceil\frac n{100k_{i,j}}\rceil}{1+\tan\theta_i}+\ell^{1/97}n^{2/3}\Bigr).
  \]

  In view of the above, to show containment it is enough to demonstrate that
  \[
   \Bigl(\frac{\lceil\frac n{100k_{i,j}}\rceil}{1+\tan\theta_j}-n^{2/3},\,\frac{\lceil\frac n{100k_{i,j}}\rceil}{1+\tan\theta_j}+n^{2/3}\Bigr)
   \subseteq\Bigl(\frac{\lceil\frac n{100k_{i,j}}\rceil}{1+\tan\theta_i}-\ell^{1/97}n^{2/3},\,\frac{\lceil\frac n{100k_{i,j}}\rceil}{1+\tan\theta_i}+\ell^{1/97}n^{2/3}\Bigr)
  \]
  and
  \[
   \Bigl(\frac{-\lceil\frac n{100k_{i,j}}\rceil}{1+\tan\theta_j}-n^{2/3},\,\frac{-\lceil\frac n{100k_{i,j}}\rceil}{1+\tan\theta_j}+n^{2/3}\Bigr)
   \subseteq\Bigl(\frac{-\lceil\frac n{100k_{i,j}}\rceil}{1+\tan\theta_i}-\ell^{1/97}n^{2/3},\,\frac{-\lceil\frac n{100k_{i,j}}\rceil}{1+\tan\theta_i}+\ell^{1/97}n^{2/3}\Bigr).
  \]
  Due to symmetry, these four inequalities reduce to the below two:
  \begin{align}
   \frac{\lceil\frac n{100k_{i,j}}\rceil}{1+\tan\theta_j}-n^{2/3}&\ge\frac{\lceil\frac n{100k_{i,j}}\rceil}{1+\tan\theta_i}-\ell^{1/97}n^{2/3},\label{eq:parleft}\\
   \frac{\lceil\frac n{100k_{i,j}}\rceil}{1+\tan\theta_j}+n^{2/3}&\le\frac{\lceil\frac n{100k_{i,j}}\rceil}{1+\tan\theta_i}+\ell^{1/97}n^{2/3}.\label{eq:parright}
  \end{align}

  \eqref{eq:parleft} is easy in the case \(\theta_i>\theta_j>\frac\pi4\) since for large \(\ell\) the terms on the two sides of the inequality dominate each other pairwise.

  \eqref{eq:parright} is more involved. Rearranging, we need to show that
  \[
   \begin{aligned}
    \lceil\frac n{100k_{i,j}}\rceil\Bigl(\frac1{1+\tan\theta_j}-\frac1{1+\tan\theta_i}\Bigr)&\le\bigl(\ell^{1/97}-1\bigr)n^{2/3},\\
    \lceil\frac n{100k_{i,j}}\rceil\cdot\frac{\tan\theta_i-\tan\theta_j}{(1+\tan\theta_j)(1+\tan\theta_i)}&\le\bigl(\ell^{1/97}-1\bigr)n^{2/3}.
   \end{aligned}
  \]
  Dropping the second denominator can only increase the left-hand side. Recalling \(1\le j<i\le\bigl[\frac{n^{1/3}}{2M}\bigr]\) and \(k_{i,j}=M(i-j)\), we also see that the first fraction is at least in the order of \(n^{2/3}\). Hence we can also get rid of the integer part by adding in a factor of 2. It is therefore enough to show
  \[
   \frac n{50k_{i,j}}\cdot(\tan\theta_i-\tan\theta_j)\le\bigl(\ell^{1/97}-1\bigr)n^{2/3}.
  \]
  Recall that the angles are each in the range \((\frac\pi4,\,\frac\pi2-\varepsilon)\). The derivative of \(\tan\) is bounded here, uniformly in all parameters except \(\varepsilon\). Thus, with \(k_{i,j}=M(i-j)\) and the definition \(\theta_i=\frac{Mi(\frac\pi2-2\varepsilon)}{n^{1/3}}\), we now need to prove
  \[
   \text{const}\cdot\frac n{i-j}\cdot\frac{i-j}{n^{1/3}}\le\bigl(\ell^{1/97}-1\bigr)n^{2/3},
  \]
  which is trivially true for large \(\ell\).
 
 Now we show that if there is a geodesic starting outside of $R_i^\ell$ and taking angle in $A_i$ intersects $W_{i,j}$ then it will imply large transversal fluctuation. Note that from the above discussion, the end points of $W_{i,j}$ are (see Figure \ref{fig: second_sum_figure_1})
 \begin{align*}
  a:&=\left(\frac{-\lceil\frac{n}{100k_{i,j}} \rceil}{1+\tan(\theta_j)}-\frac{1}{2\sqrt{2}}n^{2/3},\frac{-\lceil\frac{n}{100k_{i,j}} \rceil \tan(\theta_j)}{1+\tan(\theta_j)}+\frac{1}{2 \sqrt{2}}n^{2/3}\right),\\
  b:&=\left(\frac{-\lceil\frac{n}{100k_{i,j}} \rceil}{1+\tan(\theta_j)}+\frac{1}{2 \sqrt{2}}n^{2/3},\frac{-\lceil\frac{n}{100k_{i,j}} \rceil \tan(\theta_j)}{1+\tan(\theta_j)}-\frac{1}{2\sqrt{2}}n^{2/3}\right),\\
  c:&= \left(\frac{\lceil\frac{n}{100k_{i,j}} \rceil}{1+\tan(\theta_j)}-\frac{1}{2 \sqrt{2}}n^{2/3},\frac{\lceil\frac{n}{100k_{i,j}} \rceil \tan(\theta_j)}{1+\tan(\theta_j)}+\frac{1}{2\sqrt{2}}n^{2/3}\right),\\
  d:&=\left(\frac{\lceil\frac{n}{100k_{i,j}} \rceil}{1+\tan(\theta_j)}+\frac{1}{2\sqrt{2}}n^{2/3},\frac{\lceil\frac{n}{100k_{i,j}} \rceil \tan(\theta_j)}{1+\tan(\theta_j)}-\frac{1}{2\sqrt{2}}n^{2/3}\right).
 \end{align*}
 Similarly, the corresponding vertices of $R_i^\ell$ restricted between $\mathcal{L}_{-\lceil\frac{n}{100k_{i,j}} \rceil}$ and $\mathcal{L}_{\lceil\frac{n}{100k_{i,j}} \rceil}$  are (see Figure \ref{fig: second_sum_figure_1})
 \begin{align*}
     a':&=\left(\frac{-\lceil\frac{n}{100k_{i,j}} \rceil}{1+\tan(\theta_j)}-\frac{\ell^{1/96}n^{2/3}}{2 \sqrt{2}},\frac{-\lceil\frac{n}{100k_{i,j}} \rceil \tan(\theta_j)}{1+\tan(\theta_j)}+\frac{\ell^{1/96}n^{2/3}}{2\sqrt{2}} \right),\\
     b':&=\left(\frac{-\lceil\frac{n}{100k_{i,j}} \rceil}{1+\tan(\theta_j)}+\frac{\ell^{1/96}n^{2/3}}{2\sqrt{2}},\frac{-\lceil\frac{n}{100k_{i,j}} \rceil \tan(\theta_j)}{1+\tan(\theta_j)}-\frac{\ell^{1/96}n^{2/3}}{2\sqrt{2}} \right),\\
     c':&=\left(\frac{\lceil\frac{n}{100k_{i,j}} \rceil}{1+\tan(\theta_j)}-\frac{\ell^{1/96}n^{2/3}}{2\sqrt{2}},\frac{\lceil\frac{n}{100k_{i,j}} \rceil \tan(\theta_j)}{1+\tan(\theta_j)}+\frac{\ell^{1/96}n^{2/3}}{2\sqrt{2}} \right),\\
     d':&=\left(\frac{\lceil\frac{n}{100k_{i,j}} \rceil}{1+\tan(\theta_j)}+\frac{\ell^{1/96}n^{2/3}}{2\sqrt{2}},\frac{\lceil\frac{n}{100k_{i,j}} \rceil \tan(\theta_j)}{1+\tan(\theta_j)}-\frac{\ell^{1/96}n^{2/3}}{2\sqrt{2}} \right).
 \end{align*}
 Then  
 \[
 \psi(a)-\psi(a')=-\frac{2\lceil \frac{n}{100 k_{i,j}} \rceil \left(\tan(\theta_i)-\tan(\theta_j) \right)}{(1+\tan(\theta_i))(1+\tan(\theta_j))}+\frac{1}{\sqrt{2}} \left(\ell^{1/96} n^{2/3}-n^{2/3}\right).
 \]
 from the above discussion, this difference is much larger than $(\ell^{1/97}-1)n^{2/3}$. Similarly, we compute $\psi(b')-\psi(b),\psi(c)-\psi(c')$ and $\psi(d')-\psi(d)$ to conclude the following:

 If $\exists u\notin\mathsf{R}_i^{\ell} \text{ and } \exists \gamma_i \in A_i: |\phi(u)+n| \leq \lceil \frac{n}{100 k_{i,j}} \rceil, \Gamma_u^{\gamma_i}$ intersects $W_{i,j}$ then, as $A_i$ is of length $\frac{1}{n^{1/3}}$, $\Gamma_u^{\gamma_i}$ will have transversal fluctuation at least $\sim (\ell^{1/97}-1)n^{2/3}$ from the time level \(-n+\lceil \frac{n}{100 k_{i,j}} \rceil\) to somewhere in between $-\lceil \frac{n}{100 k_{i,j}} \rceil$ and $\lceil \frac{n}{100 k_{i,j}} \rceil$ (this is precisely the reason for the choice of the direction of $W_{i,j}$ (see Figure \ref{fig: second_sum_figure_1}). By an easy planarity argument now the probability of this event can be bounded by the desired upper bound as in the statement of the lemma.
\end{proof}
Further, we have the following lemma.
\begin{lemma} 
\label{similar_first_lemma}For sufficiently large $n,\ell$ there exist constants $C,c>0$ (depending only on $\varepsilon$) such that
\begin{displaymath}
    \mathbb{P}(\mathcal{G}_i \cup \mathcal{G}_j) \leq Ce^{-c \ell ^{3/96}}.
\end{displaymath}
    
\end{lemma}
\begin{proof}
 This lemma is already proved in \cite[Lemma 3.1]{BBB23}, the reader can refer to this proof. We briefly describe the idea here. We consider two points $a$ and $b$ with $\psi(a)<\psi(b)$ on $\mathcal{L}_{-n-\lceil \frac{n}{100k_{i,j}}\rceil}$ which are $2\ell^{1/96}n^{2/3}$ distance away (in the space direction) from the end points of $R_i^\ell$. Now, by transversal fluctuation estimate we can say that $\Gamma_a^{\beta_i}$ and $\Gamma_b^{\alpha_i}$ always stay $\ell^{1/96}n^{2/3}$ distance away (in the space direction) from the rectangle $R_i^\ell$ with probability at least $1-e^{-c\ell^{3/96}}$. On this event, every geodesics starting in $R_i^\ell$ will be sandwiched between $\Gamma_a^{\beta_i}$ and $\Gamma_b^{\alpha_i}$. So, on this event by planarity argument the event $\mathcal{G}_i$ will imply large transversal fluctuation of either $\Gamma_a^{\beta_i}$ or $\Gamma_b^{\alpha_i}$. This gives us the desired upper bound.
\end{proof}
So, we have 
\begin{displaymath}
    \mathbb{P}(\widetilde{E_i} \geq \ell^{1/3}) \leq Ce^{-c \ell ^{3/96}}+ \mathbb{P}(\{\widetilde{E_i} \geq \ell^{1/3}\} \cap (\mathcal{F}_i \cup \mathcal{G}_i)^c).
\end{displaymath}
Note that on the second event in the right hand side there are more than $\ell^{1/3}$ distinct equivalence classes of geodesics starting from $\mathcal{J}_i'$ and ending at $\mathcal{J}_i''$.
Using Proposition \ref{coalescence_theorem}, for $\ell < n^{0.03}$ sufficiently large (depending on $\varepsilon$) we have
\begin{displaymath}
    \mathbb{P}(\{\widetilde{E_i} \geq \ell^{1/3}\} \cap (\mathcal{F}_i \cup \mathcal{G}_i)^c) \leq Ce^{-c \ell^{1/384}}.
\end{displaymath}
Same argument shows for $\ell <n^{0.03}$ sufficiently large (depending on $\varepsilon$) we have 
\begin{displaymath}
    \mathbb{P}(\{\widetilde{E}_j \geq \ell^{1/3}\}) \leq Ce^{-c \ell^{1/384}}.
\end{displaymath}
For the last term in \eqref{union_bound} we consider the following.
\begin{align*}
    \mathbb{P}\left(\max I_{u,v, \gamma_i, \gamma_j} \geq \frac{\ell^{1/3}n \log |k_{i,j}|}{k_{i,j}^3}\right) \leq \mathbb{P}(\mathcal{F}_i \cup \mathcal{G}_i \cup \mathcal{F}_j \cup \mathcal{G}_j)+ \\ \mathbb{P}\left(\left \{\max I_{u,v, \gamma_i,\gamma_j} \geq \frac{\ell^{1/3} n \log |k_{i,j}|}{k_{i,j}^3}\right\} \cap (\mathcal{F}_i \cup \mathcal{G}_i \cup \mathcal{F}_j \cup \mathcal{G}_j)^c\right).
\end{align*}
Clearly, we have 
\begin{displaymath}
    \mathbb{P}(\mathcal{F}_i \cup \mathcal{G}_i \cup \mathcal{F}_j \cup \mathcal{G}_j) \leq 2Ce^{-c \ell^{3/96}}.
\end{displaymath}
On the event $\left \{\max I_{u,v, \gamma_i,\gamma_j} \geq \frac{\ell^{1/3} n \log |k_{i,j}|}{k_{i,j}^3}\right \} \cap (\mathcal{F}_i \cup \mathcal{G}_i \cup \mathcal{F}_j \cup \mathcal{G}_j)^c$, we want to estimate the maximum size of the intersection of geodesics inside $W_{i,j}$ that are starting from $\mathcal{J}_i'$ (resp.\ $\mathcal{J}_j'$) and ending at $\mathcal{J}_i''$ (resp.\ $\mathcal{J}_j''$). For this we use Lemma \ref{lemma: 3.2, 3.3}.
    Hence, we have that there exist constants $C,c$ (depending on $\varepsilon$) such that for large enough $\ell$ and $n$
    \begin{displaymath}
        \mathbb{P}\left(\left \{\max I_{u,v, \gamma_i,\gamma_j} \geq \frac{\ell^{1/3} n \log |k_{i,j}|}{k_{i,j}^3}\right\} \cap (\mathcal{F}_i \cup \mathcal{G}_i \cup \mathcal{F}_j \cup \mathcal{G}_j)^c \right) \leq Ce^{-c \ell^{1/3}}.
    \end{displaymath}
Hence, combining the above we get that there exists a constant $C>0$ (depending on $\varepsilon$) such that for all large enough $n$ we have
\begin{displaymath}
\mathbb{E}(N_{i,j}) \leq \frac{C n \log |k_{i,j}|}{k_{i,j}^3}.
\end{displaymath}
hence, from \eqref{averaging} we have that there exists $C_2>0$ such that for sufficiently large $n$
\begin{equation}
\label{second_sum_upper_bound}
    \mathbb{P}(\mathcal{E}_i \cap \mathcal{E}_j) \leq \frac{C_2 \log |k_{i,j}|}{k_{i,j}^2n^{2/3}}.
\end{equation}
We now go back to \eqref{inclusion_exclusion_for_depth}. From \eqref{first_sum_lower_bound_depth} and \eqref{second_sum_upper_bound} we have that there exists constants $c_1,C_2>0$, large $M$ such that for sufficiently large $n$(all depending only on $\varepsilon$)
\begin{displaymath}
    \mathbb{P}(D \geq n) \geq \frac{c_1}{M^2n^{1/3}}-\sum_{1 \leq i < j \leq [\frac{n^{1/3}}{2M}]} \frac{C_2 \log (M|i-j|)}{M^2|i-j|^2n^{2/3}}.
\end{displaymath}
Hence, we have for some $c>0,C>0$
\begin{displaymath}
    \mathbb{P}(D \geq n) \geq \frac{c}{M^2n^{1/3}}-\frac{C}{M^3n^{1/3}}.
\end{displaymath}
As $C,c$ depend only on $\varepsilon$ (as we have chosen all the interval length to be $\frac{1}{n^{1/3}}$) we can choose $M$ large enough so that there exists $c>0$ such that for sufficiently large $n$
\begin{displaymath}
    \mathbb{P}(D \geq n) \geq \frac{c}{n^{1/3}}.
\end{displaymath}
This completes the proof of lower bound in Theorem \ref{depth_bounds_theorem}. \qed\\

\section{Bounds for number of cars}
\label{Bounds for Number of Cars}
In this section we prove the bounds for the tail distribution of $N$. Recall that $N$ was defined to be the number of cars that pass through the origin, and \(D\) was the greatest distance a car travelled to the origin. More precisely,
\[
 \begin{aligned}
  N:&=\# \left \{u \in \mathbb{Z}^2: \boldsymbol{0} \in \Gamma_u^{\theta_u} \right\},\\
  D:&=\max\{n\in\mathbb{N}:\exists u\in\mathcal L_{-n}\text{ such that }\boldsymbol0\in\Gamma_u^{\theta_u}\}.
 \end{aligned}
\]
In particular, we will prove there exist $c_1, C_1>0$ (depending on $\varepsilon$) such that for all $n$ sufficiently large
\[
\frac{c_1}{n^{1/3}} \leq \mathbb{P} \left( N \geq n^{4/3}\right) \leq \frac{C_1 \log n}{n^{1/3}}.
\]
Note that, this will imply Theorem \ref{number of cars upper bound}.
\subsection{Proof of Theorem \ref{number of cars upper bound} upper bound}
First we restrict ourselves to smaller events. We have
\begin{displaymath}
    \mathbb{P}(N \geq n^{4/3}) \leq \mathbb{P}(D \geq 2n)+\mathbb{P}\left(\{N \geq n^{4/3}\} \cap \{D \leq 2n\}\right).
\end{displaymath}
We have already proved 
\begin{displaymath}
    \mathbb{P}(D \geq 2n) \leq \frac{C}{n^{1/3}}.
\end{displaymath}
So, we will only consider $\mathbb{P}(\{N \geq n^{4/3}\} \cap \{D \leq 2n\})$.
We want to apply an averaging argument. Let $V$ denote the line segment with $\lceil n^{2/3} \rceil $ vertices on $\mathcal{L}_0$ with midpoint $\boldsymbol{0}$. For $v \in V$ define the following random variables. 
\begin{displaymath}
    D_v:=\max\{n \in \mathbb{N}: \exists u \in \mathcal{L}_{-n} \text{ such that } v \in \Gamma_u^{\theta_u} \}
\end{displaymath}
\begin{displaymath}
 N_v:=\#\left \{u \in \mathbb{Z}^2: v \in \Gamma_u^{\theta_u}\right\}.
\end{displaymath}
Then we have for all $v \in V$
\begin{displaymath}
    \mathbb{P}(N \geq n^{4/3}, D \leq 2n)=\mathbb{P}(N_v \geq n^{4/3}, D_v \leq 2n).
\end{displaymath}
Hence,
\begin{displaymath}
    \lceil n^{2/3} \rceil \mathbb{P}(N \geq n^{4/3}, D \leq 2n) = \sum_{v \in V}\mathbb{P}(N_v \geq n^{4/3}, D_v \leq 2n)=\mathbb{E}(\widehat{N}),
\end{displaymath}
where $\widehat{N}$ is defined as follows. 
\begin{displaymath}
    \widehat{N}:=\sum_{v \in V}\mathbbm{1}_{\{N_v \geq n^{4/3}, D_v \leq 2n\}}.
\end{displaymath}
For $\ell \geq 1$ we want to estimate $\mathbb{P}(\widehat{N}\geq \ell n^{1/3} \log n)$. We define some more random variables. 
\begin{displaymath}
 \widetilde{N}:=\#\{u \in \mathbb{Z}^2: -2n \leq \phi(u) \leq 0 \text{ and }\exists v \in V \text{ such that } v \in \Gamma_u^{\theta_u}\}.
\end{displaymath}
Observe that 
\begin{displaymath}
    \{\widehat{N}\geq \ell n^{1/3 }\log n\} \subset \{ \widetilde{N} \geq \ell n^{5/3} \log n\}.
\end{displaymath}
Same as before, for $1 \leq i \leq \lfloor n^{1/3} \rfloor$ and for $c_n=\frac{n^{1/3}}{\lfloor n^{1/3}\rfloor }$ we partition the interval $(\varepsilon, \frac{\pi}{2}-\varepsilon$) into intervals $A_i$ each of equal length $\frac{c_n(\frac{\pi}{2}-2 \varepsilon)}{n^{1/3}}$. For $1 \leq i \leq \lfloor n^{1/3} \rfloor$ we define the following random variables.
\begin{displaymath}
 \widetilde{N}_i:=\# \left \{u \in \mathbb{Z}^2: -2n \leq \phi(u) \leq 0, \theta_u \in A_i \text{ and } \exists v \in V \text{ such that } v \in \Gamma_u^{\theta_u} \right\}.
\end{displaymath}
Clearly, we have
\begin{displaymath}
    \left \{\widetilde{N} \geq \ell n^{5/3} \log n \right \} \subset \bigcup_{i=1}^{\lfloor n^{1/3} \rfloor}\left \{\widetilde{N}_i \geq \ell n^{4/3} \log n \right \}.
\end{displaymath}
We will show that for each $1 \leq i \leq \lfloor n^{1/3} \rfloor$ there exist $C,c>0$ (depending only on $\varepsilon$) such that for sufficiently large $n$ (depending only on $\varepsilon$) and $\ell<n^{1/6}$
\begin{equation}
\label{exponential bound}
    \mathbb{P}\left (\widetilde{N}_i \geq \ell n^{4/3} \right ) \leq Ce^{-c \ell}.
\end{equation}
We fix $1 \leq i \leq \lfloor n^{1/3} \rfloor$. Let $ \alpha_i, \beta_i, \theta_i$ denote the left end point, right end point and midpoint of $A_i$ respectively and $n_i \in \mathcal{L}_{-2n}$ be the intersection point of $\mathcal{L}_{-2n}$ and the line $y=(\tan \theta_i) x$. Consider the line segment $\widetilde{V}$ with $\lfloor \frac{\ell}{32} n^{2/3} \rfloor$ vertices on $\mathcal{L}_{-2n}$ with midpoint $n_i$. Let $v_1,v_2$ be the end points of $\widetilde{V}$ with $\psi(v_1) \leq \psi(v_2).$ Consider $\Gamma_{v_1}^{\alpha_i}$ and $\Gamma_{v_2}^{\beta_i}$. Let $\mathsf{R}_i$ denote the parallelogram whose one pair of opposite sides are line segments with $\lfloor \frac{\ell}{16} n^{2/3} \rfloor$ many vertices and they lie on $\mathcal{L}_{-2n}$ (resp.\ $\mathcal{L}_{0}$) with midpoint $n_i$ (resp.\ $\boldsymbol{0}$) (see Figure \ref{fig: upper_bound_figure}).
\begin{figure}[!ht]
    \includegraphics[width=13 cm]{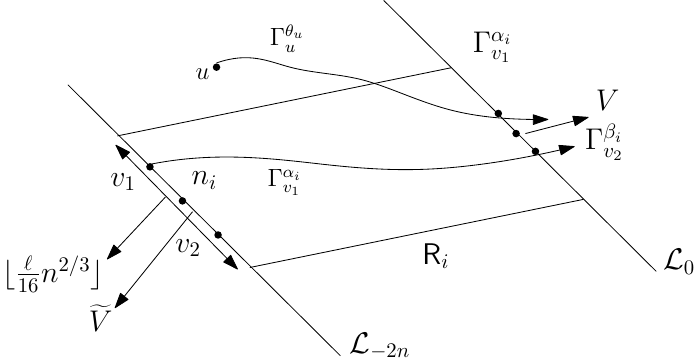}
    \caption{To prove the upper bound in Theorem \ref{number of cars upper bound} we fix $1 \leq i \leq \lfloor n^{1/3} \rfloor$. The event that there are more than $\ell n^{4/3} $ many $u$ with $\theta_u \in A_i$ and $\Gamma_u^{\theta_u}$ intersecting $V$ can happen in two ways. Either the above $u$ lies outside $\mathsf{R}_i$. For sufficiently large $\ell$ this will imply large transversal fluctuation for  either $\Gamma_{v_1}^{\alpha_i}$ or $\Gamma_{v_2}^{\beta_i}$ (the case when $\Gamma^{\alpha_i}_{v_1}$ will have large transversal fluctuation is shown in the figure). Proposition \ref{transversal_fluctuation_of_semi_infinite_geodesic} gives a stretched exponential upper bound for this event. The other possibility is that there are more than $\ell n^{4/3}$ many vertices inside $\mathsf{R}_i$ that choose an angle in $A_i$. Using Hoeffding's inequality we get a stretched exponential upper bound for this event. Combining the above we get \eqref{exponential bound}.}
    \label{fig: upper_bound_figure} 
\end{figure}
Let $\mathcal{H}_i$ be the event defined as follows.
\begin{itemize}
 \item $\mathcal{H}_i:= \left \{ \exists u \in \mathbb{Z}^2 \setminus \mathsf{R}_i \text{ such that } \theta_u \in A_i \text{ and } v \in \Gamma^{\theta_u}_u \text{ for some } v \in V \right \}$.
\end{itemize}
Then 
\begin{equation}
\label{breaking_equation}
     \mathbb{P} \left (\widetilde{N}_i \geq \ell n^{4/3} \right ) \leq \mathbb{P}(\mathcal{H}_i)+\mathbb{P}\left (\left \{\widetilde{N}_i \geq \ell n^{4/3}\right \} \cap (\mathcal{H}_i)^c \right).
\end{equation}
Note that by planarity the event $\mathcal{H}_i$ can happen in two ways. Either $\Gamma_{v_1}^{\alpha_i}$ or $\Gamma_{v_2}^{\beta_i}$ will leave $\mathsf{R}_i$ or they will have large transversal fluctuation from the straight lines starting from $v_1$ in the direction $\alpha_i$ and starting from $v_2$ in the direction $\beta_i$ respectively. (see Figure \ref{fig: upper_bound_figure}). Since, $(\beta_i-\alpha_i)$ is of order $\frac{1}{n^{1/3}}$, for sufficiently large $\ell$ (depending on $\varepsilon$), the event $\mathcal{H}_i$ will imply that either $\Gamma_{v_1}^{\alpha_i}$ or $\Gamma_{v_2}^{\beta_i}$ will have transversal fluctuation larger than $\frac{\ell}{64}n^{2/3}$. By Proposition \ref{transversal_fluctuation_of_semi_infinite_geodesic} below we have 
Hence, for sufficiently large $\ell$
\begin{displaymath}
    \mathbb{P}(\mathcal{H}_i) \leq Ce^{-c \ell^3},
\end{displaymath}
for some $C,c>0$ depending on $\varepsilon.$
We consider the second term in \eqref{breaking_equation}. On the event $\left \{\widetilde{N}_i \geq \ell n^{4/3}\right \} \cap (\mathcal{H}_i)^c$ there are at least $\ell n^{4/3}$ many vertices inside $\mathsf{R}_i$ that chooses an angle from $A_i$. For large $\ell$ and $n$ there are at most $\frac{\ell}{4} n^{5/3}$ many vertices inside $\mathsf{R}_i$. Consider the collection of i.i.d.\ random variables $\{\mathbbm{1}_{\{\theta_u \in A_i\}}\}_{u \in \mathsf{R}_i}$ with mean $\frac{c_n}{n^{1/3}}$. So, 
\begin{displaymath}
    \{\widetilde{N}_i \geq \ell n^{4/3}\} \cap (\mathcal{H}_i)^c \subset \left \{\sum_{u \in \mathsf{R}_i}\mathbbm{1}_{\{\theta_u \in A_i\}} \geq \ell n^{4/3}\right \}.
\end{displaymath}
By Hoeffding's inequality we have, for some $c>0$ 
\[
   \mathbb{P}\left(\sum_{u \in \mathsf{R}_i}\mathbbm{1}_{\{\theta_u \in A_i\}} \geq \ell n^{4/3}\right)\leq  \mathbb{P}\left(\sum_{u \in \mathsf{R}_i}\mathbbm{1}_{\{\theta_u \in A_i\}}-\mathbb{E}\left (\sum_{u \in \mathsf{R}_i}\mathbbm{1}_{\{\theta_u \in A_i\}}\right ) \geq \frac{\ell}{2}n^{4/3}\right) \leq e^{-c \ell}.
\]
Hence, we proved \eqref{exponential bound}.
Now, for $\ell<n^{1/6}$ sufficiently large
\begin{displaymath}
    \mathbb{P}\left (\widetilde{N}_i \geq \ell n^{4/3} \log n \right ) \leq Ce^{-c \ell \log n}.
\end{displaymath}
So, we have
\[
    \mathbb{P}\left(\bigcup_{i=1}^{\lfloor n^{1/3}\rfloor} \{\widetilde{N}_i \geq \ell n^{4/3} \log n \}\right)
    \leq \sum_{i=1}^{\lfloor n^{1/3} \rfloor}\mathbb{P}(\widetilde{N}_i \geq \ell n^{4/3} \log n) \leq Cn^{1/3}e^{-c \ell \log n} \leq C'e^{-c' \ell}.
\]
Thus,
\begin{displaymath}
    \mathbb{P}\left (\widehat{N} \geq \ell n^{1/3} \log n \right) \leq C'e^{-c' \ell}.
\end{displaymath}
So, we have that there exists $\ell_0$ (depending only on $\varepsilon$) and $\widetilde{C}$ (depending only on $\varepsilon$)
\[
    \mathbb{E}\left(\frac{\widehat{N}}{n^{1/3}\log n}\right) \leq \sum_{\ell \geq 1}\mathbb{P}(\widehat{N} \geq \ell n^{1/3} \log n)
    \leq  \ell_0 + \sum_{\ell_0 \leq \ell < n^{1/6}} C' e^{-c' \ell}+ \sum_{n^{1/6} < \ell < \frac{n^{1/3}}{\log n}}C'e^{-c'n^{1/6}}< \widetilde{C}.
\]
This proves the upper bound in Theorem \ref{number of cars upper bound} after an \(n\leftrightarrow n^{4/3}\) change of variable. \qed\\
\subsection{Proof of Theorem \ref{number of cars upper bound} lower bound} Same as before, first we fix large constants $M$ and $\ell_0$ which will be chosen later. For $1 \leq i \leq [\frac{n^{1/3}}{2M}]$, we consider the intervals $A_i \subset (\varepsilon, \frac{\pi}{2}-\varepsilon)$, each of length $\frac{(\frac{\pi}{2}-2 \varepsilon)}{n^{1/3}}$ and midpoint $\frac{Mi(\frac{\pi}{2}-2\varepsilon)}{n^{1/3}}$. 
Corresponding to each $A_i$ we define the following random variables. 
\begin{displaymath}
 N_i:=\#\left \{u \in \mathbb{Z}^2: \theta_u \in A_i \text{ and } \boldsymbol{0} \in \Gamma_u^{\theta_u}\right \}.
\end{displaymath}
Then we have the following. 
\begin{displaymath}
    \bigcup_{i=1}^{[\frac{n^{1/3}}{2M}]}\left \{N_i \geq \frac{n^{4/3}}{\ell_0}, n \leq D \leq 2n \right \} \subset \left \{N \geq \frac{n^{4/3}}{\ell_0} \right \}.
\end{displaymath}
For $1 \leq i \leq [\frac{n^{1/3}}{2M}]$, let
\begin{displaymath}
    \mathcal{I}_i:=\left \{N_i \geq \frac{n^{4/3}}{\ell_0}, n \leq D \leq 2n \right \}.
\end{displaymath}
Applying inclusion exclusion principle on the union of $\mathcal{I}_i$ and considering the first two terms (Proposition \ref{pr:bonfe}) we have the following lower bound.
\begin{equation}
\label{inclusion_exclusion}
    \mathbb{P}\left (N \geq \frac{n^{4/3}}{\ell_0} \right) \geq \sum_{i=1}^{[\frac{n^{1/3}}{2M}]}\mathbb{P}(\mathcal{I}_i)-\sum_{1 \leq i < j \leq [\frac{n^{1/3}}{2M}]} \mathbb{P}(\mathcal{I}_i \cap \mathcal{I}_j).
\end{equation}
First we consider the first sum. We fix $1 \leq i \leq [\frac{n^{1/3}}{2M}].$ Let us consider the line segment $V_M$ with $\lceil4Mn^{2/3}\rceil$ many vertices on $\mathcal{L}_0$ with midpoint $\boldsymbol{0}$.
For $v \in V_M$, define
\begin{displaymath}
 N_i^v:=\#\left \{ u \in \mathbb{Z}^2: \theta_u \in A_i \text{ and } v \in \Gamma_u^{\theta_u} \right \}.
\end{displaymath}
\begin{align*}
    &\lceil4Mn^{2/3}\rceil\mathbb{P}\left(\left \{N_i \geq \frac{n^{4/3}}{\ell_0}, n \leq D \leq 2n \right \} \right)
    = \sum_{v \in V_M}\mathbb{P}\left(\left \{N_i^v \geq \frac{n^{4/3}}{\ell_0}, n \leq D_v \leq 2n \right \} \right)\\&=\mathbb{E}(\widehat{N_i}),
\end{align*}
where 
\begin{displaymath}
    \widehat{N_i}:=\sum_{v \in V_M} \mathbbm{1}_{\left \{N_i^v \geq \frac{n^{4/3}}{\ell_0}, n \leq D_v \leq 2n\right \}}.
\end{displaymath}
\begin{figure}[!ht]
    \includegraphics[width=13 cm]{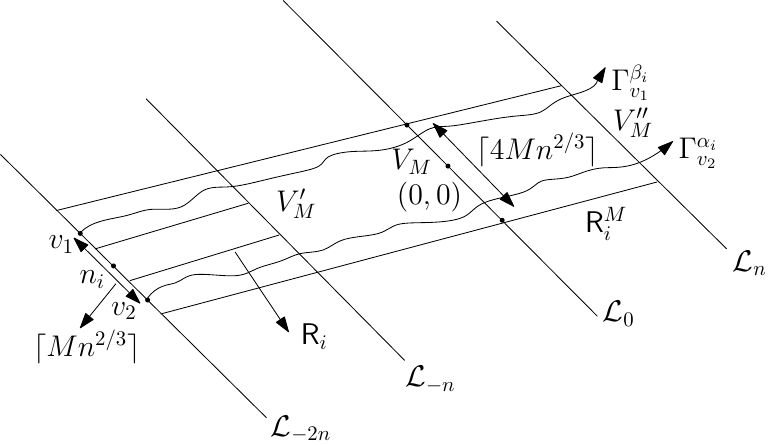}
    \caption{To find lower bound for the first sum in \eqref{inclusion_exclusion} we fix an $i$. Using Proposition \ref{transversal_fluctuation_of_semi_infinite_geodesic} we can fix $M$ large enough so that with high probability $\Gamma_{v_1}^{\beta_i}$ and $\Gamma_{v_2}^{\alpha_i}$ are contained in $\mathsf{R}_i^M$ and do not intersect $\mathsf{R}_i$. On this event any car starting from $\mathsf{R}_i$ and taking angle in $A_i$ will intersect $\mathcal{L}_0$ on $V_M$. Further, using \cite[Theorem 3.10]{BHS18} depending on this $M$ we can choose $\ell_0$ large enough so that with large probability there are at most $\ell_0$ many distinct points on $V_M$ carrying a geodesic starting from $V_M'$ and ending at $V_M''$. Also, with positive probability there are at least $n^{4/3}$ many vertices $u \in \mathsf{R}_i$ such that $\theta_u \in A_i$. The last event is independent of the first two events. Hence, combining all these we get on a positive probability event there exists $v \in V_M$ such that there are at least $\frac{n^{4/3}}{\ell_0}$  many $u$ with $\theta_u \in A_i$ and $v \in \Gamma_u^{\theta_u}$. Now, applying an averaging argument and summing over all $i$ give a lower bound for the first sum in \eqref{inclusion_exclusion}.}
    \label{fig: first_sum_figure} 
\end{figure}
We will show on a positive probability event $\widehat{N_i} \geq 1.$ As before let $\alpha_i, \beta_i, \theta_i$ denote the left end point, right end point, mid point of $A_i$ respectively. Consider $n_i$ which is the intersection point of $\mathcal{L}_{-2n}$ and $y=\tan(\theta_i)x.$ Consider the line segment $\widetilde{V_M}$ with $\lceil Mn^{2/3}\rceil$ many vertices on $\mathcal{L}_{-2n}$ with midpoint $n_i$. Let $v_1$ (resp.\ $v_2$) denote the end points of $\widetilde{V_M}$ with $\psi(v_1) \leq \psi(v_2).$ Further consider the following two parallelograms (see Figure \ref{fig: first_sum_figure}). Let $n_i'$ (resp.\ $n_i''$) denote the intersection point of $y=\tan (\theta_i) x$ with $\mathcal{L}_{-n}$ (resp.\ $\mathcal{L}_n$).
\begin{itemize}
\item $\mathsf{R}_i$ is the parallelogram whose two opposite pair of sides are line segments with $\lceil \frac{n^{2/3}}{4} \rceil$, vertices and they lie on $\mathcal{L}_{-2n}$ (resp.\ $\mathcal{L}_{-n}$) with midpoint $n_i$ (resp.\ $n_i'$).
     \item $\mathsf{R}_i^M$ is the parallelogram whose two opposite pair of sides are line segments with $\lceil 4Mn^{2/3} \rceil$ many vertices , they lie on $\mathcal{L}_{-2n}$ (resp.\ $\mathcal{L}_{n}$) with midpoint $n_i$ (resp.\ $n_i''$).
     \end{itemize}
We define the event $\mathsf{TF}_i$ as follows.
\begin{itemize}
    \item $\mathsf{TF}_i$ is the event that $\Gamma_{v_1}^{\beta_i}$ and $\Gamma_{v_2}^{\alpha_i}$ are contained between the longer sides of $\mathsf{R}_i^M$ and $\Gamma_{v_1}^{\beta_i}$ and $\Gamma_{v_2}^{\alpha_i}$ do not enter $\mathsf{R}_i$.
\end{itemize}
As $(\beta_i-\alpha_i)$ is of order $\frac{1}{n^{1/3}}$, the event $\mathsf{TF}_i$ will imply for sufficiently large $M$ either $\Gamma_{v_1}^{\beta_i}$ or $\Gamma_{v_2}^{\alpha_i}$ will have transversal fluctuation more than $\frac{M}{4}n^{2/3}$. Using Proposition \ref{transversal_fluctuation_of_semi_infinite_geodesic} we can choose $M$ (depending only on $\varepsilon$) large enough and $n$ sufficiently large enough (depending on $M$) so that the following happens.
\begin{displaymath}
    \mathbb{P}(\mathsf{TF}_i) \geq 0.99.
\end{displaymath}
On $\mathsf{TF}_i$, for any $u \in \mathsf{R}_i$ with $\theta_u \in A_i, v \in \Gamma_{u}^{\theta_u}$, for some $v \in V_M.$ Now we define an equivalence class. Let $n_i''$ denote the intersection point of $y=\tan (\theta_i) x$ with $\mathcal{L}_n$. Consider the two line segments $V_M'$ (resp.\ $V_M''$)  with $\lceil 4Mn^{2/3} \rceil$ vertices on $\mathcal{L}_{-n}$ (resp.\ $\mathcal{L}_n$) with midpoint $n_i'$ (resp.\ $n_i''$). For $u,v \in V_M'$ and $u',v' \in V_M'',$ we say $(u,v) \sim (u',v')$ if $\Gamma_{u,v}(0)=\Gamma_{u',v'}(0).$ Let $E_i$ denote the number of equivalence classes. Using Proposition \ref{coalescence_theorem}, we can choose $\ell_0$ large enough (depending on $M$) so that for all sufficiently large $n$
\begin{displaymath}
 \mathbb{P}\left (E_i \leq \frac{\ell_0}{4} \right ) \geq 0.99.
\end{displaymath}
Now, we consider the following random variable.
\begin{displaymath}
    Y_n:=\sum_{u \in \mathsf{R}_i}\mathbbm{1}_{\{\theta_u \in A_i\}}.
\end{displaymath}
Note that for large $n$, $\mathbb{E}(Y_n) \geq \frac{n^{4/3}}{4}$ and $\Vv(Y_n)=n \lceil \frac{n^{2/3}}{4}\rceil\left(\frac{1}{n^{1/3}}-\frac{1}{n^{2/3}} \right)$. By the Berry-Esseen inequality (Proposition \ref{p: Berry-Essen}) we have the following for all $n$ and a constant $c$
\begin{displaymath}
    |\mathbb{P}(\{Y_n-\mathbb{E}(Y_n)\geq 0\})-1/2| \leq \frac{c}{(n^{4/3}-n)^{3/2} n^{5/6}}.
\end{displaymath}
Hence, for all sufficiently large $n$ we have 
\begin{displaymath}
    \mathbb{P}\left (\left\{Y_n-\frac{n^{4/3}}{4} \geq 0\right \} \right ) \geq \frac{1}{4}.
\end{displaymath}
Finally we observe,
\begin{equation}
\label{event_on_which_there_is_1_good_vertex}
    \mathsf{TF}_i \cap \left \{E_i \leq \frac{\ell_0}4\right\} \cap \left \{Y_n-\frac{n^{4/3}}{4} \geq 0 \right \} \subset \{\widehat{N_i} \geq 1\}.
\end{equation}
Further, the event $\left \{Y_n-\frac{n^{4/3}}{4} \geq 0 \right \}$ is independent of $\mathsf{TF}_i \cap \left \{E_i \leq \frac{\ell_0}{4} \right \}$. So, combining all the above for sufficiently large $n,$ there exists $c_1>0$ ($c_1$ is a fixed constant, does not depend on anything) such that 
\begin{displaymath}
    \mathbb{E}(\widehat{N_i}) \geq \mathbb{P}\left(\mathsf{TF}_i \cap \left \{E_i \leq \frac{\ell_0}{4} \right \} \cap \left \{Y_n-\frac{n^{4/3}}{4} \geq 0 \right \}\right) \geq c_1.
\end{displaymath}
Hence,
\begin{equation}
\label{first_sum_lower_bound}
    \mathbb{P}\left(\left \{N_i \geq \frac{n^{4/3}}{\ell_0}, n \leq D \leq 2n \right \} \right) \geq \frac{c_1}{4Mn^{2/3}}.
\end{equation}
Now, we consider the second sum in \eqref{inclusion_exclusion}. We fix $1\leq i <j \leq [\frac{n^{1/3}}{2M}],$  both large enough. 
We recall the events $\mathcal{E}_i$ defined in the proof of lower bound of Theorem \ref{depth_bounds_theorem}. 
Clearly, for all $1 \leq i \leq [\frac{n^{1/3}}{2M}]$
\begin{displaymath}
    \mathcal{I}_i \subset \mathcal{E}_i.
\end{displaymath}
So, as we did in the proof of lower bound of Theorem \ref{depth_bounds_theorem}, from \eqref{first_sum_lower_bound}, \eqref{second_sum_upper_bound} and \eqref{inclusion_exclusion} we have 
\begin{displaymath}
    \mathbb{P}(N \geq \frac{n^{4/3}}{\ell_0}) \geq \frac{c_1}{8M^2 n^{1/3}}-\sum_{1 \leq i <j \leq \frac{n^{1/3}}{2M}}\frac{C_2 \log (M|i-j|)}{M^2|i-j|^2n^{2/3}}.
\end{displaymath}
As argued before, choosing $M$ sufficiently large completes the proof of Theorem \ref{number of cars upper bound} lower bound (with \(n\leftrightarrow n^{4/3}\), as before). \qed
\section{Distance to find a road with large number of cars}\label{Distance to find a road with large number of cars}
In this section we analyse how far one needs to go from origin to see a road with large number of cars. Recall that for a fixed constant $\ell_0$ we define 
\begin{displaymath}
 T^{\ell_0}_n:=\min \left \{ |\psi(v)| : v \in \mathcal{L}_0, N_v \geq \frac{n^{4/3}}{\ell_0} \right \}.
\end{displaymath}
\subsection{Proof of Theorem \ref{road_lower_bound}} Let $I$ denote the line segment with $\lfloor n^{1/3} \rfloor$ many vertices on $\mathcal{L}_0$ with midpoint $\boldsymbol{0}$. We divide the interval $(\varepsilon, \frac{\pi}{2}-\varepsilon)$ as we did in the proof of lower bound of Theorem \ref{number of cars upper bound}. i.e., we fix large constants $M$ and $\ell_0$ which will be chosen later. For $1 \leq i \leq [\frac{n^{1/3}}{2M}]$, we consider the intervals $A_i \subset (\varepsilon, \frac{\pi}{2}-\varepsilon)$, each of length $\frac{(\frac{\pi}{2}-2 \varepsilon)}{n^{1/3}}$ and midpoint $\frac{Mi(\frac{\pi}{2}-2\varepsilon)}{n^{1/3}}$.

Let us define the following events. For $1 \leq i \leq [\frac{n^{1/3}}{2M}]$ we define 
\begin{itemize}
    \item $\mathcal{K}_i:=\{ \exists w \in I$ such that there are $\frac{n^{4/3}}{\ell_0}$ many vertices $v$ with $-2n \leq \phi(v) \leq -n, \theta_v \in A_i$ and $w \in \Gamma_v^{\theta_v}$\}.
\end{itemize}
Clearly,
\begin{displaymath}
 \bigcup_{i=1}^{[\frac{n^{1/3}}{2M}]} \mathcal{K}_i \subset \{T^{\ell_0}_n\leq n^{1/3}\}.
\end{displaymath}
Same as before, considering the first two terms after applying the inclusion-exclusion principle (Proposition \ref{pr:bonfe}) we have
\begin{equation}
\label{inclusion_exclusion_1}
    \mathbb{P}(T^{\ell_0}_n\leq n^{1/3}) \geq \sum_{i=1}^{[\frac{n^{1/3}}{2M}]}\mathbb{P}(\mathcal{K}_i)-\sum_{1 \leq i < j \leq [\frac{n^{1/3}}{2M}]} \mathbb{P}(\mathcal{K}_i \cap \mathcal{K}_j).
\end{equation}
We first consider the first sum. We fix $1 \leq i \leq [\frac{n^{1/3}}{2M}].$ We will again apply an averaging argument. But this time we will average over disjoint intervals containing $\lfloor n^{1/3}\rfloor$ vertices each. Precisely, we do the following. We consider $V_M$, line segment with $\lfloor 4Mn^{1/3} \rfloor \lfloor n^{1/3} \rfloor$ many vertices on $\mathcal{L}_0$ with midpoint $\boldsymbol{0}$. We partition it into $\lfloor 4Mn^{1/3} \rfloor $ many mutually disjoint sub-intervals each with $\lfloor n^{1/3} \rfloor$ many vertices inside $V_M$. i.e., consider the mutually disjoint intervals $I_j$ each with $\lfloor n^{1/3} \rfloor$ many vertices. For each of these intervals we define the following events. 
\begin{itemize}
    \item $\mathcal{K}_i^j:=\{ \exists w \in I_j$ such that there are $\frac{n^{4/3}}{\ell_0}$ many vertices $v$ with $-2n \leq \phi(v) \leq -n, \theta_v \in A_i$ and $w \in \Gamma_v^{\theta_v}\}.$
\end{itemize}
We have
\begin{displaymath}
    \mathbb{P}(\mathcal{K}_i)=\mathbb{P}(\mathcal{K}_i^j).
\end{displaymath}
So,
\begin{equation}
\label{averaging_equation}
    \lfloor 4Mn^{1/3}\rfloor \mathbb{P}(\mathcal{K}_i)=\sum_{1 \leq j \leq \lfloor 4Mn^{1/3} \rfloor}\mathbb{P}(\mathcal{K}_i^j)=\mathbb{E}\left(\sum_{1 \leq j \leq \lfloor 4Mn^{1/3} \rfloor} \mathbbm{1}_{\mathcal{K}_i^j} \right).
\end{equation}
We will show, on a positive probability set, for large $n$ the following holds. 
\begin{displaymath}
    \sum_{1 \leq j \leq \lfloor 4Mn^{1/3} \rfloor} \mathbbm{1}_{\mathcal{K}_i^j} \geq 1. 
\end{displaymath}
But observe that, precisely this we have shown in the proof of Theorem \ref{number of cars upper bound} lower bound. Specifically, in \eqref{event_on_which_there_is_1_good_vertex} we have shown that on the event $\mathsf{TF}_i \cap \left \{E_i \leq \frac{\ell_0}{4} \right\} \cap \{Y_n-\frac{n^{4/3}}{4} \geq 0\}$ there is at least one vertex $w \in V_M$ such that there are $\frac{n^{4/3}}{\ell_0}$ many $v \in \mathbb{Z}^2$ with $-2n \leq \phi(v) \leq -n, \theta_v \in A_i$ and $w \in \Gamma_v^{\theta_v}.$ So, we have
\begin{displaymath}
    \mathsf{TF}_i \cap \{E_i \leq \ell_0\} \cap \left \{Y_n-\frac{n^{4/3}}{4} \geq 0 \right \} \subset \left \{\sum_{1 \leq j \leq \lfloor 4Mn^{1/3}\rfloor} \mathbbm{1}_{\mathcal{K}_i^j} \geq 1 \right \}.
\end{displaymath}
Hence, there exists $c>0$ such that for sufficiently large $n$ we have
\begin{displaymath}
    \mathbb{E}\left(\sum_{1 \leq j \leq \lfloor 4Mn^{1/3} \rfloor} \mathbbm{1}_{\mathcal{K}_i^j} \right) \geq c.
\end{displaymath}
So, from \eqref{averaging_equation} we have for large $n$, and for some $c>0$,
\begin{displaymath}
    \mathbb{P}(\mathcal{K}_i) \geq \frac{c}{Mn^{1/3}}.
\end{displaymath}
Now, we consider the second sum in \eqref{inclusion_exclusion_1}. We fix $1 \leq i < j \leq [\frac{n^{1/3}}{2M}]$. Same as before we consider the following events. For $1 \leq i \leq [\frac{n^{1/3}}{2M}],$ we define 
\begin{itemize}
    \item $\mathcal{M}_i:=\{ \exists u \in \mathbb{Z}^2, \gamma \in A_i$ such that $\phi(u)=-n$, and $\exists w \in I$ such that $w \in \Gamma_u^{\gamma}\}.$
\end{itemize}
Clearly,
\begin{displaymath}
    \mathcal{K}_i \subset \mathcal{M}_i.
\end{displaymath}
So, we will work with $\mathcal{M}_i$ from now on. We will again apply an averaging argument in both space and time directions. But this time in the space direction we will average over disjoint intervals of length $n^{1/3}.$ Precisely, we fix $1 \leq i< j \leq [\frac{n^{1/3}}{2M}]$. As before recall that $\theta_i$ and $\theta_j$ are midpoints of $A_i$ and $A_j$. Consider the parallelograms $W_{i,j}$ depending on $\theta_i$ and $\theta_j$ as defined in the proof of Theorem \ref{depth_bounds_theorem} lower bound with the smaller side length $n^{1/3}\cdot n^{1/3}$. We now fix $m$ such that $-\lceil \frac{n}{100k_{i,j}} \rceil \leq m \leq \lceil \frac{n}{100k_{i,j}} \rceil$ and consider $W_{i,j} \cap \mathcal{L}_m$. We consider these line segments of length $n^{1/3}\cdot n^{1/3}$ and partition them into line segments with $\lfloor n^{1/3} \rfloor$ many vertices. So, we get, for large \(n\), at least $\frac{c n^{4/3}}{100k_{i,j}}$ many mutually disjoint line segments each with $\lfloor n^{1/3} \rfloor $ many vertices for some $c>0$ uniformly bounded away from 0. We will denote the line segments by $I_{k,m}$, if the line segment lies on $\mathcal{L}_m$ and $k$ is the index to specify the partition in the space direction on $\mathcal{L}_m$. For all $-\lceil \frac{n}{100 k_{i,j}} \rceil \leq m \leq \lceil \frac{n}{100k_{i,j}}\rceil$ and $1 \leq k \leq 2\lfloor n^{1/3} \rfloor$ and for all $1 \leq i \leq [\frac{n^{1/3}}{2M}]$ we define the following events.
\begin{itemize}
    \item $\mathcal{M}_i^{k,m}:=\{ \exists u \in \mathbb{Z}^2, \gamma \in A_i$ such that $\phi(u)=-n+m$, and $\exists w \in I_{k,m}$ such that $w \in \Gamma_u^{\gamma}\}$.
    \end{itemize}
    Clearly,
    \begin{displaymath}
        \mathbb{P}\left(\mathcal{M}_i \cap \mathcal{M}_j\right)=\mathbb{P}\left(\mathcal{M}_i^{k,m} \cap \mathcal{M}_j^{k,m}\right).
    \end{displaymath}
    So, we have 
    \begin{align*}
        \frac{c n^{4/3}}{100k_{i,j}}\mathbb{P}\left(\mathcal{M}_i \cap \mathcal{M}_j\right)\le\sum_{\substack{-\lceil \frac{n}{100k_{i,j}} \rceil \leq m \leq \lceil \frac{n}{100k_{i,j}} \rceil\\ 1 \leq k \leq 2\lfloor n^{1/3}\rfloor}}\mathbb{P}\left(\mathcal{M}_i^{k,m} \cap \mathcal{M}_j^{k,m}\right)\\ =\mathbb{E}\left(\sum_{\substack{-\lceil \frac{n}{100 k_{i,j}} \rceil \leq m \leq \lceil \frac{n}{100k_{i,j}} \rceil\\ 1 \leq k \leq 2\lfloor n^{1/3} \rfloor}} \mathbbm{1}_{\mathcal{H}_i^{k,m} \cap \mathcal{H}_j^{k,m}} \right) \leq \mathbb{E}(N_{i,j}).
    \end{align*}
    Recall that we had defined $N_{i,j}$ in the proof of lower bound in Theorem \ref{depth_bounds_theorem} and from \eqref{expected_number_inside_parallelogram_bound} we have that there exists constant $C>0$ such that for large $n$
    \begin{displaymath}
        \frac{c n^{4/3}}{100k_{i,j}}\mathbb{P}\left(\mathcal{M}_i \cap \mathcal{M}_j\right) \leq \frac{C n \log |k_{i,j}|}{k_{i,j}^3}.
    \end{displaymath}
Combining all the above and from \eqref{inclusion_exclusion_1} we have that there are $C,c>0$ (depending only on $\varepsilon$)
\begin{displaymath}
 \mathbb{P}(T^{\ell_0}_n\leq n^{1/3}) \geq \sum_{i=1}^{[\frac{n^{1/3}}{2M}]}\frac{c}{Mn^{1/3}}-\sum_{1 \leq i <j \leq [\frac{n^{1/3}}{2M}]}\frac{C\log (M|i-j|)}{M^2|i-j|^2n^{1/3}}.
\end{displaymath}
Hence, again we have for some constant $C,c>0$ we have
\begin{displaymath}
 \mathbb{P}(T^{\ell_0}_n\leq n^{1/3}) \geq \frac{c}{M^2}-\frac{C}{M^3}.
\end{displaymath}
Finally, we can choose $M$ large enough so that the right side is positive. This completes the proof. \qed

\section{Empirical analysis of road networks}\label{sc:elev}

		This section describes our approach for empirically validating theoretical predictions about the alignment of road networks with geodesic paths and the statistics that follows. A significant basis for our model is the hypothesis that in hilly terrain, real-world road network configurations substantially align with geodesic shortest paths when elevation variations are considered. Our road network model leverages the Last Passage Percolation (LPP) model for its theoretical tractability in predicting road network structures. However, we acknowledge that road networks more realistically conform to the dynamics of First Passage Percolation (FPP) models, despite their lesser tractability.

		The interconnection between LPP and FPP models within the Kardar-Parisi-Zhang (KPZ) universality class offers a theoretical justification for this approach. This KPZ linkage underpins our assumption that insights derived from the LPP framework can provide predictive insights for FPP models.

Our validation consists of two parts. First we verify that, on the input side of the model, FPP in a hilly environment indeed predicts where roads are built, this is done in Section \ref{sc:hillval}. We then turn to compare the scaling behavior in Theorems \ref{high_probability_event} and \ref{road_lower_bound} to actual road layout and traffic statistics in Section \ref{sc:comptheo}.

\subsection{Road networks and elevation data}\label{sc:hillval}

		To ensure the relevance and accuracy of FPP models in capturing the essence of real-world road layouts, this section is dedicated to validating this assumption. By examining the alignment of road networks against geodesic paths influenced by elevation data, we aim to validate the predictive capability of FPP models and, in turn, our LPP variant.
		
	\subsubsection{Data acquisition and processing}
	
	Our study leverages high-resolution elevation data sourced from the Shuttle Radar Topography Mission (SRTM), accessed through the Earth Explorer platform \cite{earthexplorer}. The SRTM dataset provides comprehensive global elevation data. From this dataset, we extract elevation data encoded into image files, which serve as input in building an edge weighted graph.

	\subsubsection{Graph construction and path analysis}

In this study, we leveraged a computational framework found within the GitHub repository  \cite{statmechmodels} to facilitate our analysis of road networks within a designated study area. Utilizing the code from this repository enabled us to construct a weighted graph that models a square lattice segment of the terrain under investigation. Each vertex within this graph represents a precise point on the terrain, and the weights assigned to the edges are calculated based on both the elevation differences and the distances between the vertices at the endpoints. 
 
	Specifically, if $\Delta$ denotes the grid spacing between elevation samples and $h$ is a function expressing the elevation at a vertex then the edge weight on the edge $\{x,y\}$ of the graph is taken to be 
 $$w_{\{x,y\}} = \sqrt { \Delta^2 + (h(x)-h(y))^2  }.$$

    Moreover, we have found that in some cases more consistent results can be achieved between the predicted paths and the observed roads if we treat $\Delta$ as a tuneable parameter.  
      
	\subsubsection{Figures}

	We provide Figures \ref{fig: FigureLong}, \ref{fig:FigureLowRes} and \ref{fig:FigureHighRes} to illustrate the comparison between actual road segments and theoretical geodesic paths derived from our graph model. Each figure shows a map provided by ESRI World Topo Map \cite{esriworldtopo2024} with a geodesic path, calculated based on elevation data and shortest path algorithms, overlaid on selected road segments. The intent behind these figures is to visually highlight where and how actual road networks align with or deviate from the calculated geodesic paths. This direct comparison serves as evidence for the hypothesis that road layouts tend to follow geodesic paths, considering elevation.

\begin{figure}[ht]
 \begin{center}
    \includegraphics[width=12 cm]{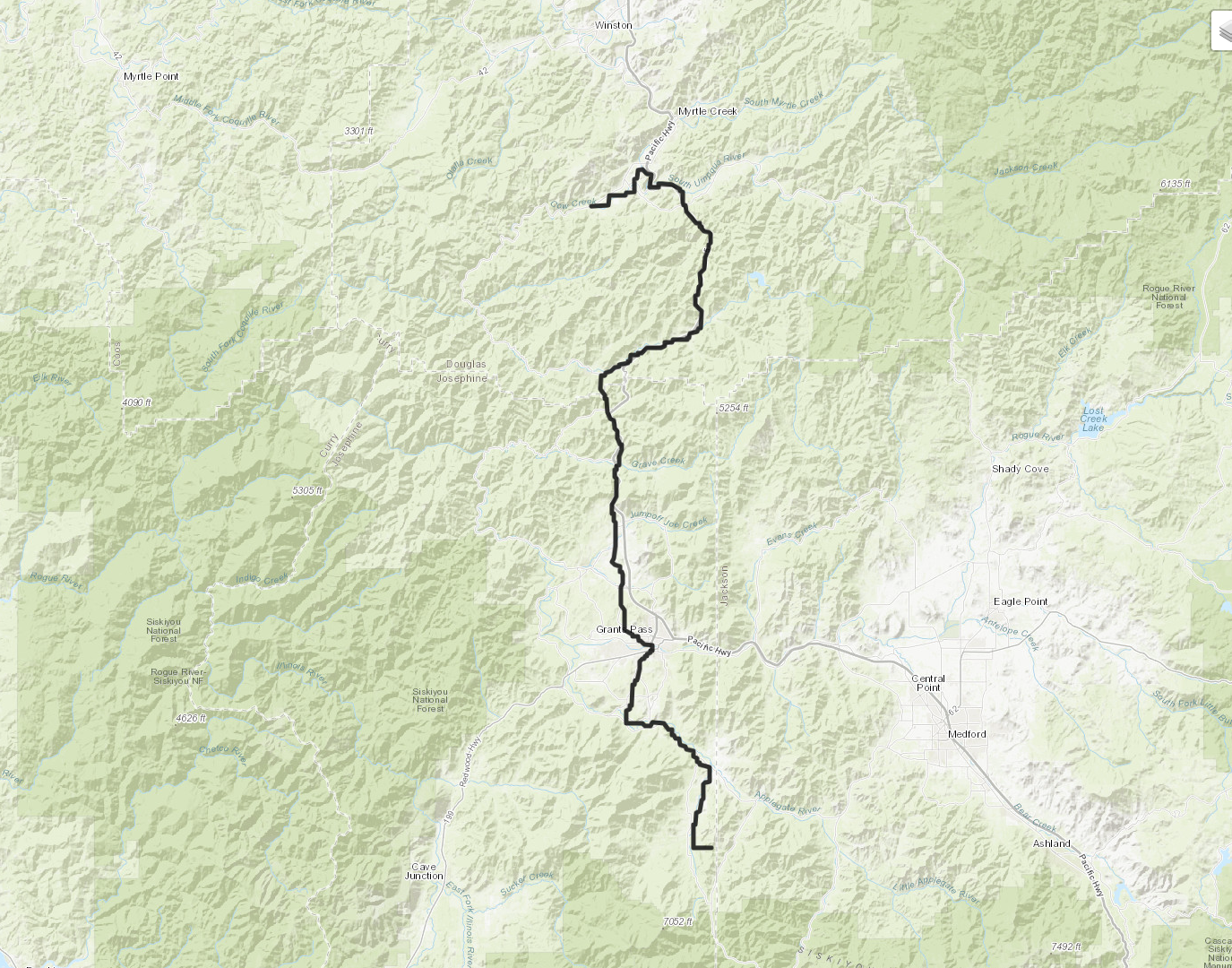}
\end{center}
    \caption{A path computed between coordinates $(42.9251086, -123.4220748)$ and $(42.2017416, -123.2366819)$.}
    \label{fig: FigureLong}
\end{figure}

\begin{figure}[ht]
    \centering
    \begin{minipage}[b]{0.45\textwidth}
        \includegraphics[width=\textwidth]{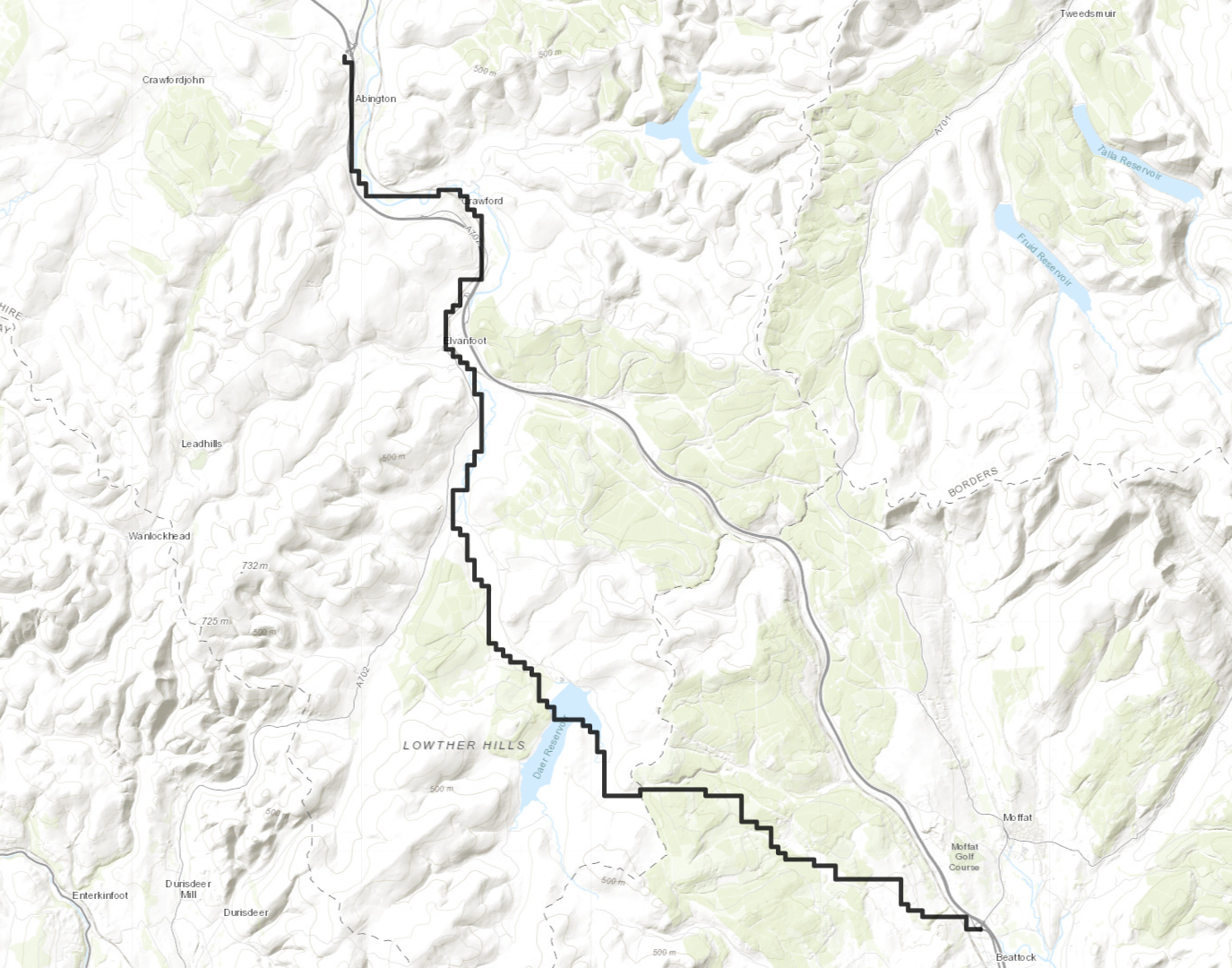}
        \caption{A path computed between $(55.5028, -3.6964)$ and  $(55.3124,-3.4540)$. We see an apparent disagreement between portions of the road and geodesic caused by low resolution lattice spacing.}
        \label{fig:FigureLowRes}
    \end{minipage}
    \hfill 
    \begin{minipage}[b]{0.45\textwidth}
        \includegraphics[width=\textwidth]{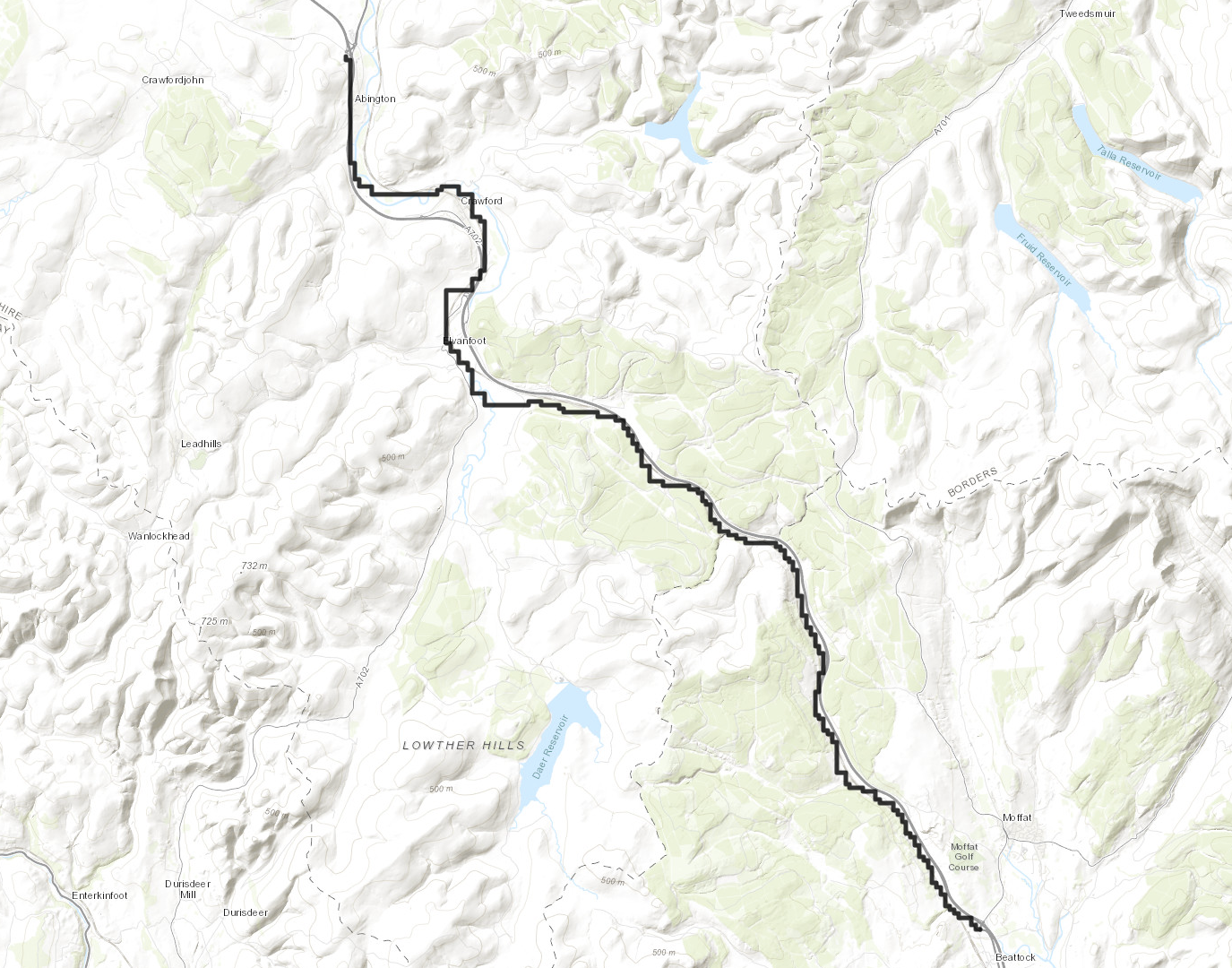}
        \caption{A path computed between $(55.5028, -3.6964)$ and  $(55.3124,-3.4540)$. This time we see better agreement when refining lattice scale.}
        \label{fig:FigureHighRes}
    \end{minipage}
\end{figure}

	The figures largely corroborate our hypothesis that real-world road networks tend to follow the theoretical shortest paths deduced from our model, with particular emphasis on elevation data. Nonetheless, certain regions exhibit discrepancies, prompting a discussion of potential causes for these divergences.
	
	\paragraph{Finite Lattice Effects} One factor to consider is the graph model's finite lattice structure, which simulates the continuous real-world terrain with a discrete grid. This approximation introduces potential errors in shortest path calculations, where the model's sensitivity to edge weight value can manifest as notable deviations from expected paths. Such minor inaccuracies in terrain representation, therefore, might explain the observed misalignment between theoretical predictions and actual road configurations. This effect is made apparent between Figures \ref{fig:FigureLowRes} and \ref{fig:FigureHighRes}.
	
	\paragraph{Beyond Shortest Path Considerations} Additionally, the real-world intricacies of road design decision-making extend beyond the simplistic criteria of minimizing path length or elevation variations. Factors such as legal restrictions, environmental conservation, and socio-economic impacts heavily influence road planning, often leading to routes that diverge from those predicted by our shortest path model. Elements like land ownership, ecological preservation areas, and pre-existing structures necessitate alterations to the theoretically optimal paths. These considerations, absent from the shortest path algorithm, might account for any discrepancies between our model's predictions and the existing road networks.
	 
	\paragraph{}
	Nevertheless, the clear alignment observed between the model predictions and actual road layouts, despite some discrepancies, would seem to support the use of the FPP model for characterizing road network structures. Then by the KPZ correspondence mentioned earlier, we can feel more confident in the predictive capability of the LPP variant. We now turn to verifying some of these predictions on actual road networks.

\subsection{Comparison of the model output and real-world traffic data}\label{sc:comptheo}

We use the government traffic dataset from the United Kingdom accessed at \cite{roadtraffic_gov_uk}. This consists of annual average daily traffic flow at count points scattered around the roads. We are interested in investigating the validity of Theorems \ref{high_probability_event} and \ref{road_lower_bound} for this dataset. We will disregard the logarithmic correction in the former. Fix a startpoint and consider distances \(d\) along a spatial direction of LPP. These results say that, with probability separated away from both 0 and 1, we are expected to get at least one road with traffic count proportional to $d^{4}$ within this distance $d$. As opposed to LPP, we consider real road networks to be isotropic, which means that the direction we choose should be irrelevant. We therefore work with East as our preferred direction. The task is to pick points on the map randomly, draw an Easterly line from each, record where this intersects actual roads as well as the traffic volumes carried by these roads.

It is quite difficult to determine an accurate traffic count on a specific intersection point with our Easterly line because the closest count point on that road might be far away from here. It might also happen that there are many junctions between the intersection point and the nearest count point on the road, harming the accuracy of the count point to the traffic count at the intersection of interest. Figure \ref{fig:diffmeas} illustrates the difficulties.
\begin{figure}[ht]
 \includegraphics[width=\textwidth]{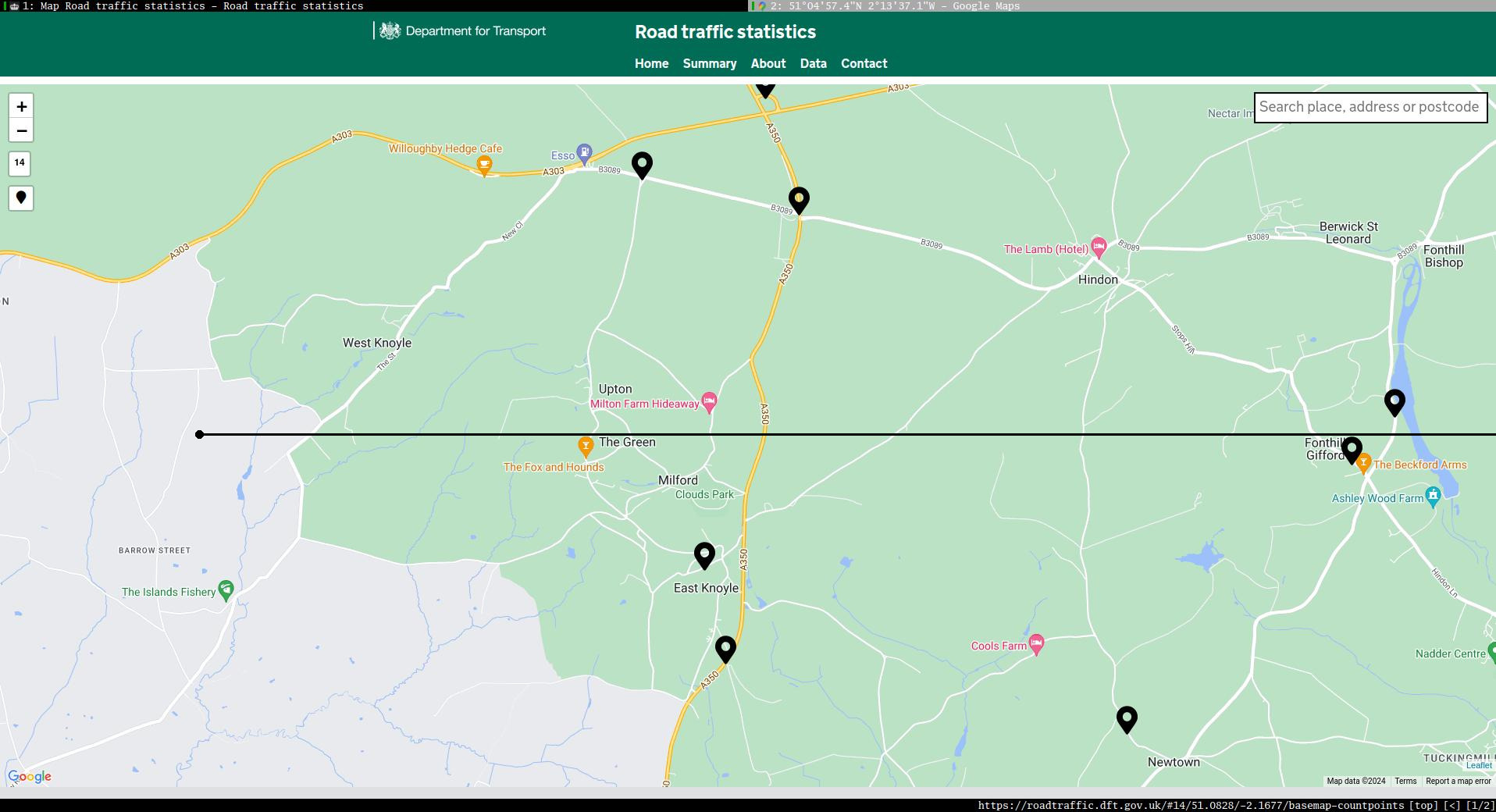}
 \caption{Manual measurement of distances and road traffic. This is the interface of the \emph{Road traffic statistics} webpage, with a startpoint and its Easterly line overlaid by us in black. The pinpoints are the locations of the traffic count points. They can be clicked, and their data is also available to bulk-download. To be recorded are the distances between the startpoint at the left end of the black line and the intersection points of roads with the black line, as well as the traffic carried by those roads.}\label{fig:diffmeas} 
\end{figure}

Careful inspection of the map and of the location of traffic count points allow for such data to be collected with reasonable accuracy, especially for busier roads. We will refer to this as the \emph{manual method}. Smaller lanes are often not measured, we had no choice but to simply leave them out of our analysis.
However, this process is really slow and would not give sufficient volumes of data for proper statistical analysis. Unfortunately, we failed to develop an algorithm that could perform this data collection for us.

Instead, we consider a strip around the Easterly line starting from the chosen random point $x$ and take all the count points that lie inside this strip. We record their distances from \(x\) and the traffic count they possess, and pretend that these are actual locations of intersections between the Easterly line and the roads that they measure. We will refer to this process as the \emph{automated method}.

In the rest of the paper we first calibrate the strip width to give a good match between outcomes of the manual method and the automated method. We do that on a few examples where the manual method was carried out. We then run the automated method on about 2\,000 randomly chosen points on the map, and compare the statistics gained with the statements of our Theorems \ref{high_probability_event} and \ref{road_lower_bound}.

\subsubsection{Calibrating the automated method}

We performed the manual method at four locations of the UK, near Cirencester (51.7076,-1.90897), Gillingham (51.0826,-2.22697), Glasgow (55.606053,-4.0506979), and a point in Wales (52.1577699,-3.5958771). The code to support this analysis, along with additional simulation results is found in a dedicated GitHub repository created by the authors for this project \cite{githublpptrafficmodel}.

While the startpoint should be completely random, large-scale inhomogeneity of the UK and its harmful effects soon became apparent. First, big cities are obviously much denser than rural areas, distorting the measurement. Hence we made sure to avoid startpoints from where Easterly lines would have traced into larger urban areas.

Second, the UK has a number of estuaries that needed to be avoided as they form long natural barriers to road building.

Finally, we found that a dense part in England, roughly in the rectangle Drochester -- Hastings -- Hull -- Blackpool, has a significantly denser road network than the North and the West of the UK. Hence we performed our analysis separately for this South-East rectangle, and the North and West regions.

Of the four locations above for the manual method, Cirencester and Gillingham belong to the denser South-East block, while the Wales and the Glasgow startpoints are in the sparser West and North region.

We then ran the automated method on the same four startpoints, with different strip width. Strip width is very important. If the strip is very wide, then it will contain many countpoints which might wrongly include some high-traffic roads that don't actually intersect the Easterly line. Similarly, if the strip is too narrow, then it might miss some important countpoints of high-traffic roads because those countpoints can lie outside of the strip even if that road intersects the Easterly line.

To check the theoretical predictions, we need to answer whether or not there is at least one road above certain traffic threshold within certain distance of the startpoint we chose. This is equivalent to considering the running maximum of crossing traffic volumes as we move alongside the Easterly line from the startpoint. It is therefore this running maximum which we tried to align between the manual and the automated method. The figures below illustrate this graph on the left hand-side. For better intuition we also included violin plots on the right hand-side, which show the traffic distribution measured in the two methods.

Figures \ref{fig: South_2_1km}, \ref{fig: South_2_3km} and \ref{fig: South_2_5km} refer to the startpoint near Cirencester (51.7076,-1.90897) in the South-East region.
It is noted that sometimes the violin plot takes a peak when there are multiple traffic counts around the same distance. For example, consider the orange line in Figure \ref{fig: South_2_5km}. From the left side, it shows that there is a new maximum that is observed around 60\,km after a peak at 40\,km. But on the right side, traffic distribution shows that the traffic around 60\,km is lower than the traffic at a distance of 40\,km. This happens because multiple traffic counts of nearly the same values are observed near 40\,km. The plots indicate that the appropriate strip width is between 3\,km and 5\,km.

Similarly, also in the South-East region Figures \ref{fig: South_3_1km}, \ref{fig: South_3_3km} and \ref{fig: South_3_5km} are from the startpoint (51.0826,-2.22697) near Gillingham. Figures \ref{fig: West_1_3km}, \ref{fig: West_1_8km} and \ref{fig: West_1_10km} are the plots with the startpoint near Wales (52.1577699,-3.5958771) in the sparser, West region of UK. Finally, Figure \ref{fig:North_10} is the plot with the startpoint near Glasgow, which belongs to the North region of UK. When taking the manual measurement for this plot, we get a high-traffic road within a distance of 10km, but when considering the automated measurement of a strip width of 10\,km, we encounter another, much higher-traffic road, and these remain the busiest crossings until the distance of 150\,km. This makes the runing maximum very different between the manual and the automated method. However, if we neglect the first few mismatched measurements, we get a good comparison. Figure \ref{fig:North_10} illustrates it all. As the points are randomly chosen, rarely bad points like this one are expected to occur, and we do not have an efficient way to eliminate this issue.

After observing these plots, we have concluded that 3\,km would be the ideal strip width for the South-East region and 10\,km for the North and West regions of the UK.

\subsubsection{Statistical analysis}

We were now ready to deploy the automated method in large scale. As mentioned above, the plots (Figures \ref{fig: South_2_1km} - \ref{fig:North_10}) indicate that the statistics are different between the South-East and the North and West regions of the UK. We therefore divide the UK into these two regions for our statistical analysis.
Around 2000 random startpoints were considered inside the UK, and those points are at least 80\,km horizontally away from big cities (like London, Birmingham, etc.), big rivers or oceans to avoid distorting effects to the measurements.
For a fixed strip width and distance $d$, we observe how many points $x$ have traffic counts greater than $k$, and calculate the relative frequency by dividing by the total number of points, where $k$ runs from $1000$ to $100000$. Figures \ref{fig: Stat_South_3km} and \ref{fig: Stat_West_10km} show the traffic threshold encountered with relative frequency between $[0.49, 0.51]$
as a function of the distance travelled on the Easterly line.
\begin{figure}[ht]
    \centering
    \includegraphics[width=12 cm]{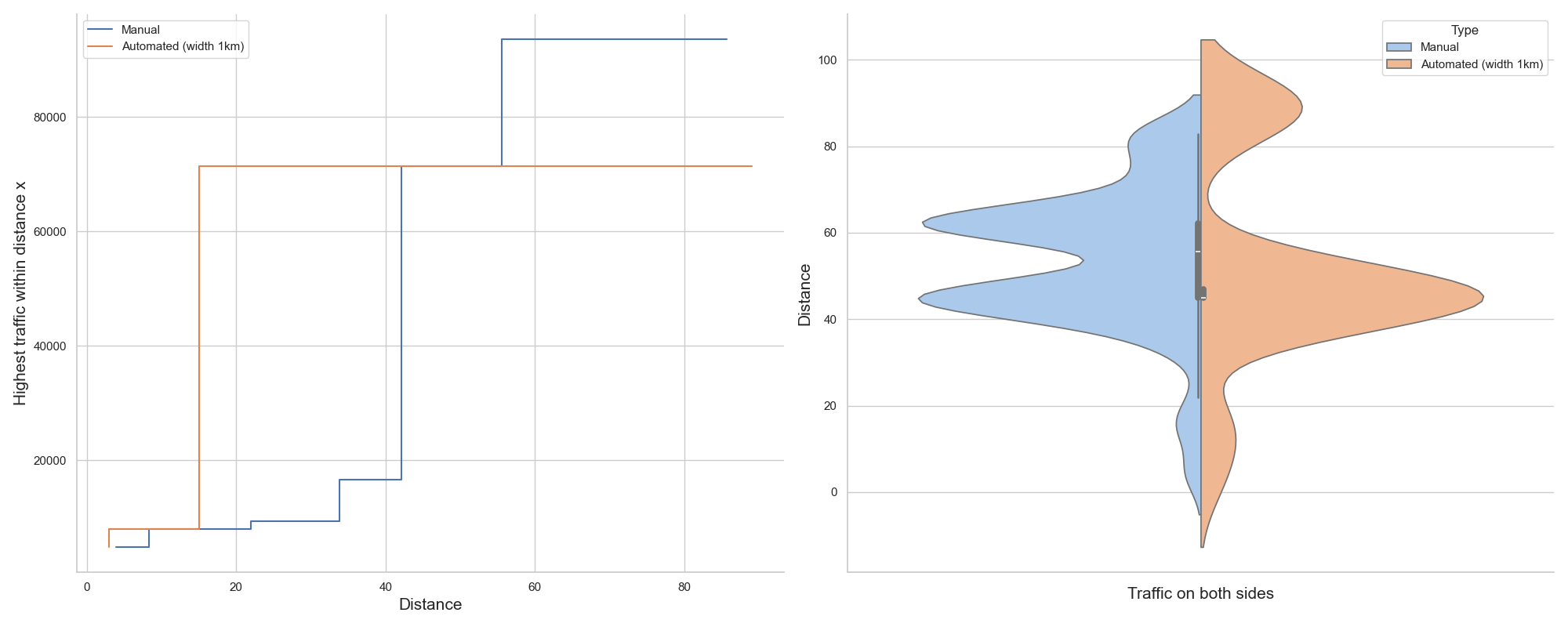}
    \caption{Busiest traffic from the startpoint (51.7076,-1.90897) (to the left) and violin plot (to the right) with strip width 1\,km.}
    \label{fig: South_2_1km} 
    \centering
    \includegraphics[width=12 cm]{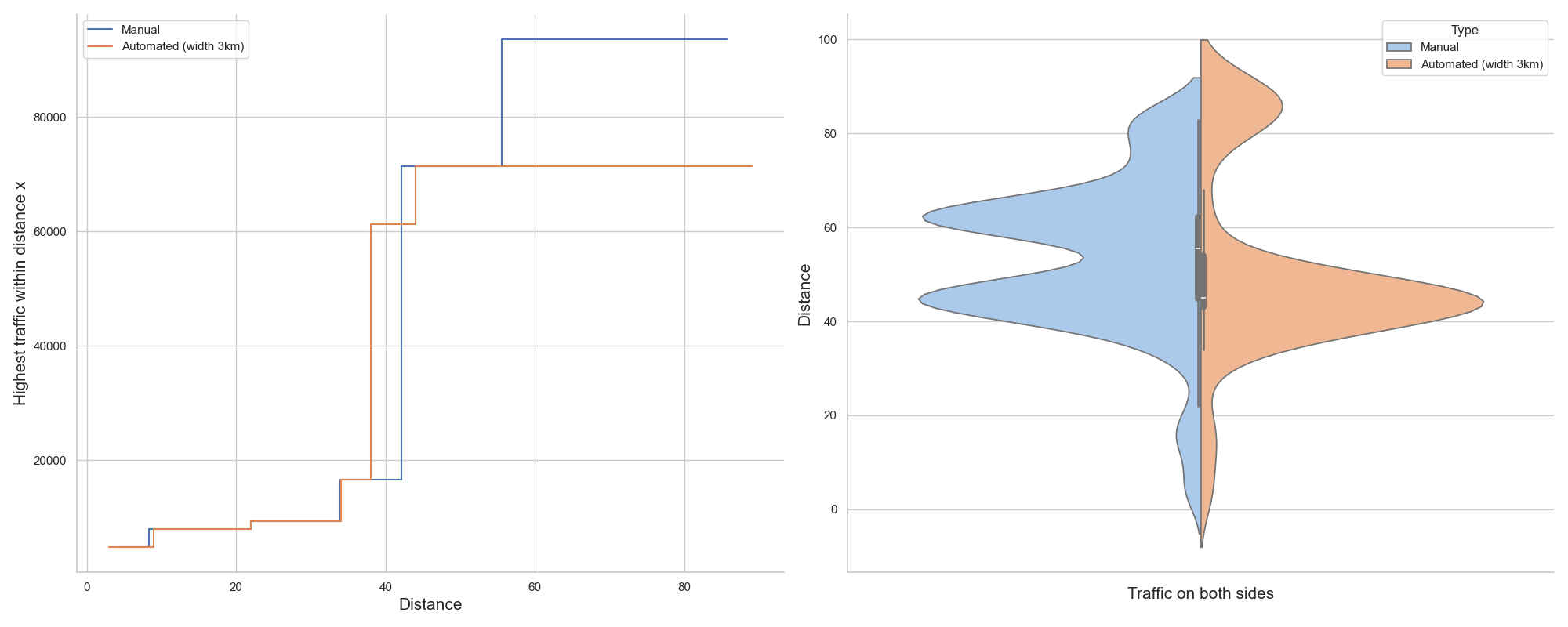}
    \caption{Busiest traffic from the startpoint (51.7076,-1.90897) (to the left) and violin plot (to the right) with strip width 3\,km.}
    \label{fig: South_2_3km} 
    \centering
    \includegraphics[width=12 cm]{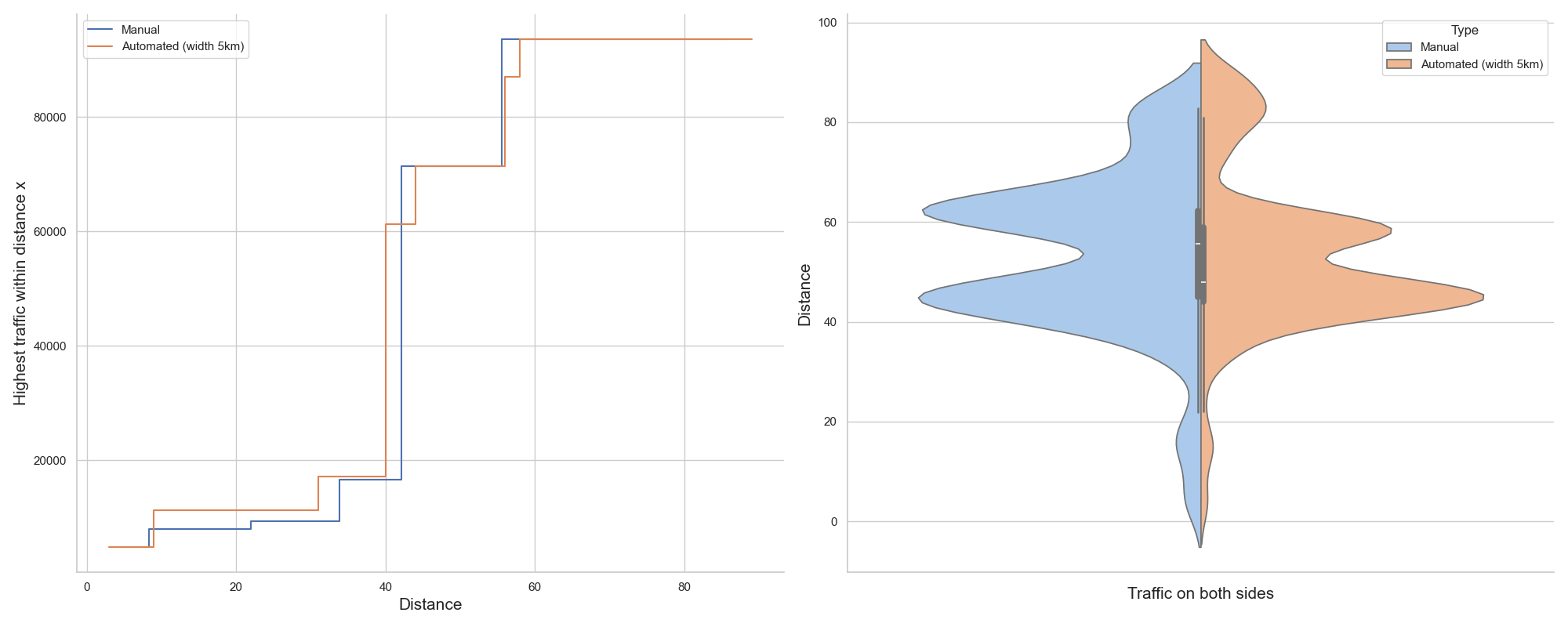}
    \caption{Busiest traffic from the startpoint (51.7076,-1.90897) (to the left) and violin plot (to the right) with strip width 5\,km.}
    \label{fig: South_2_5km} 
\end{figure}
\begin{figure}[ht]
    \centering
    \includegraphics[width=12 cm]{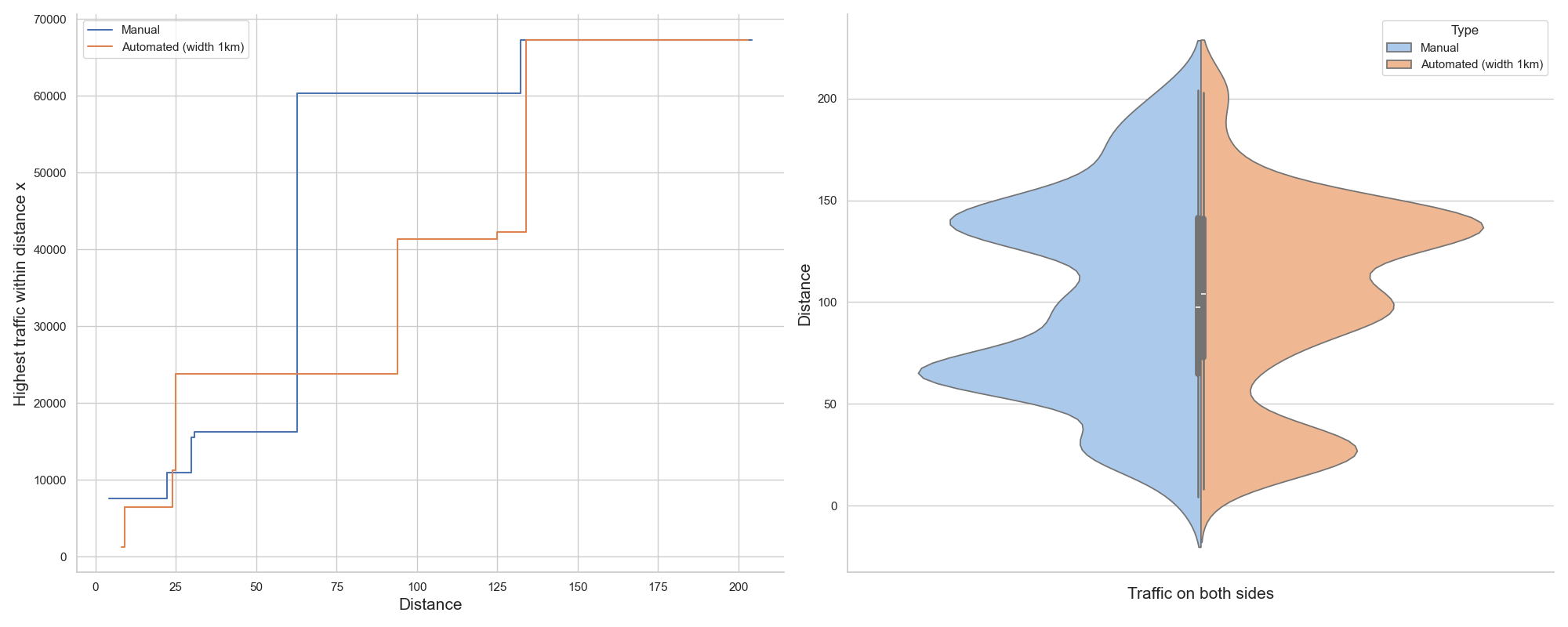}
    \caption{Busiest traffic from the startpoint (51.0826,-2.22697) (to the left) and violin plot (to the right) with strip width 1\,km.}
    \label{fig: South_3_1km} 
    \centering
    \includegraphics[width=12 cm]{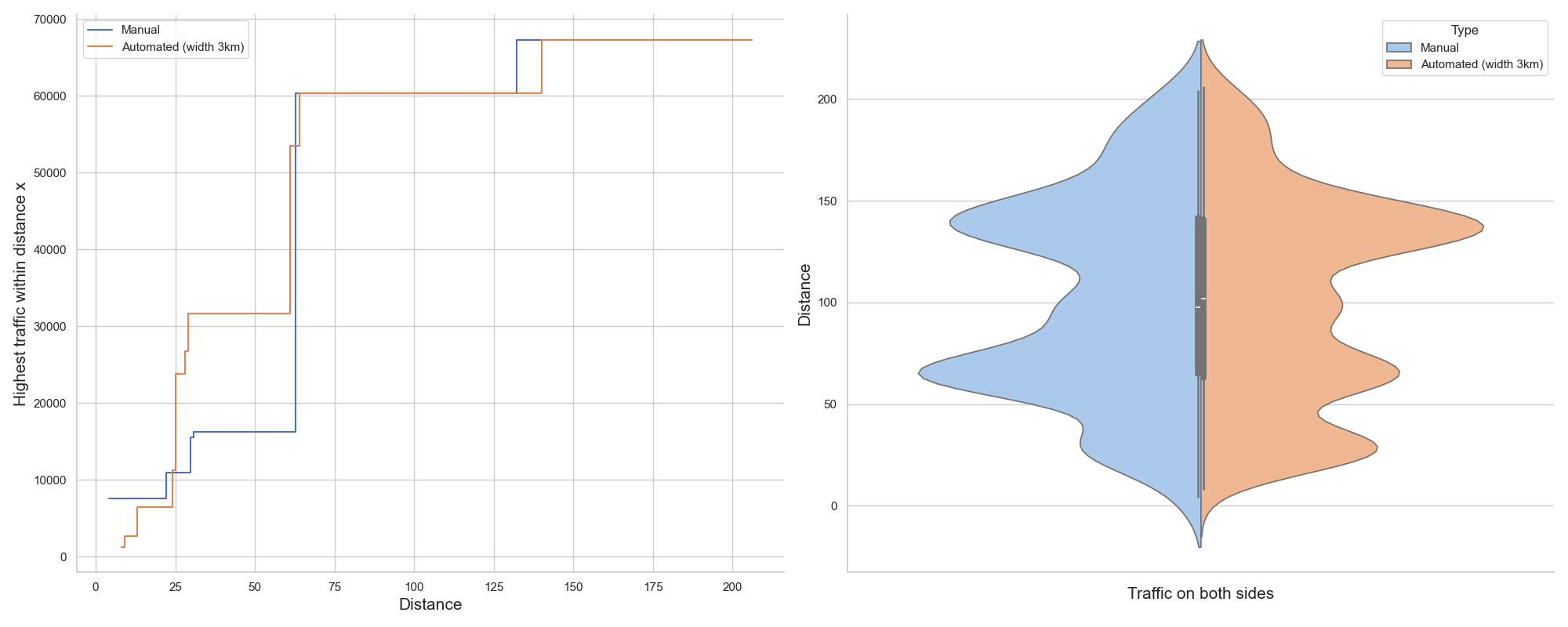}
    \caption{Busiest traffic from the startpoint (51.0826,-2.22697) (to the left) and violin plot (to the right) with strip width 3\,km.}
    \label{fig: South_3_3km} 
    \centering
    \includegraphics[width=12 cm]{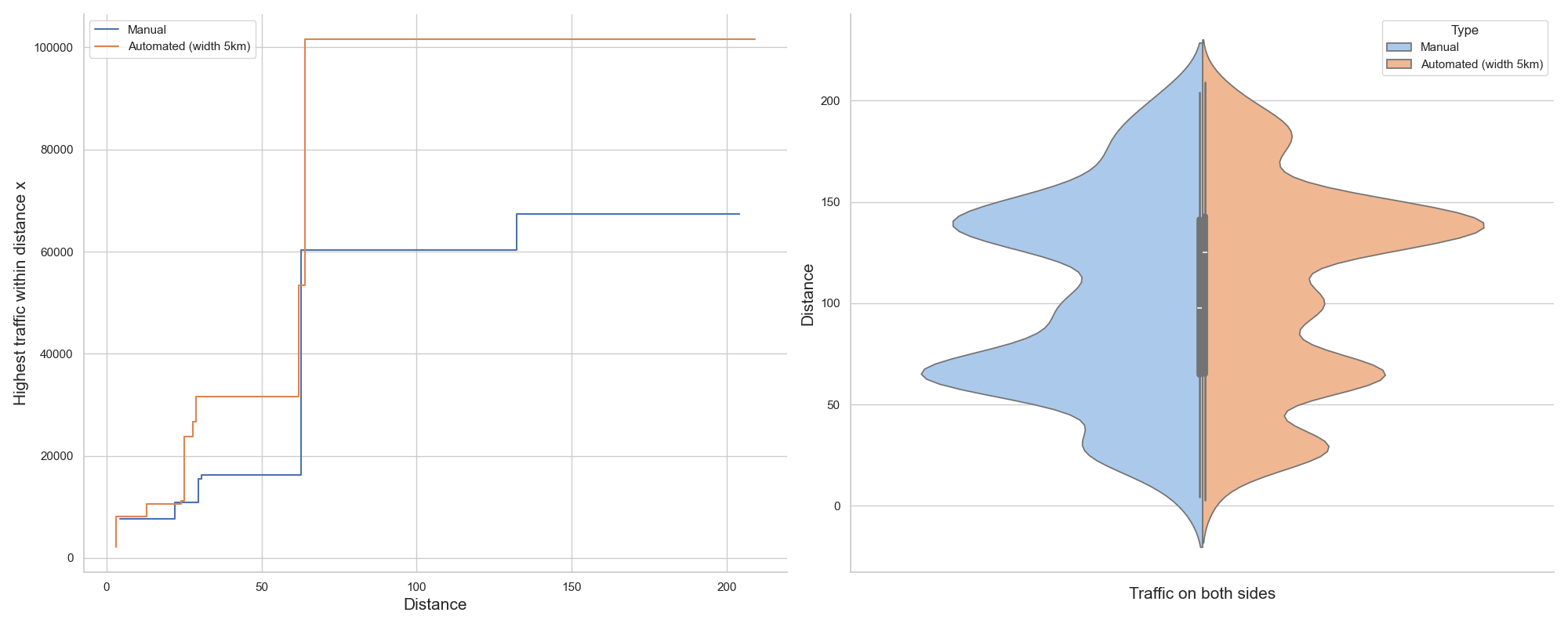}
    \caption{Busiest traffic from the startpoint (51.0826,-2.22697) (to the left) and violin plot (to the right) with strip width 5\,km.}
    \label{fig: South_3_5km} 
\end{figure}
\begin{figure}[ht]
    \centering
    \includegraphics[width=12 cm]{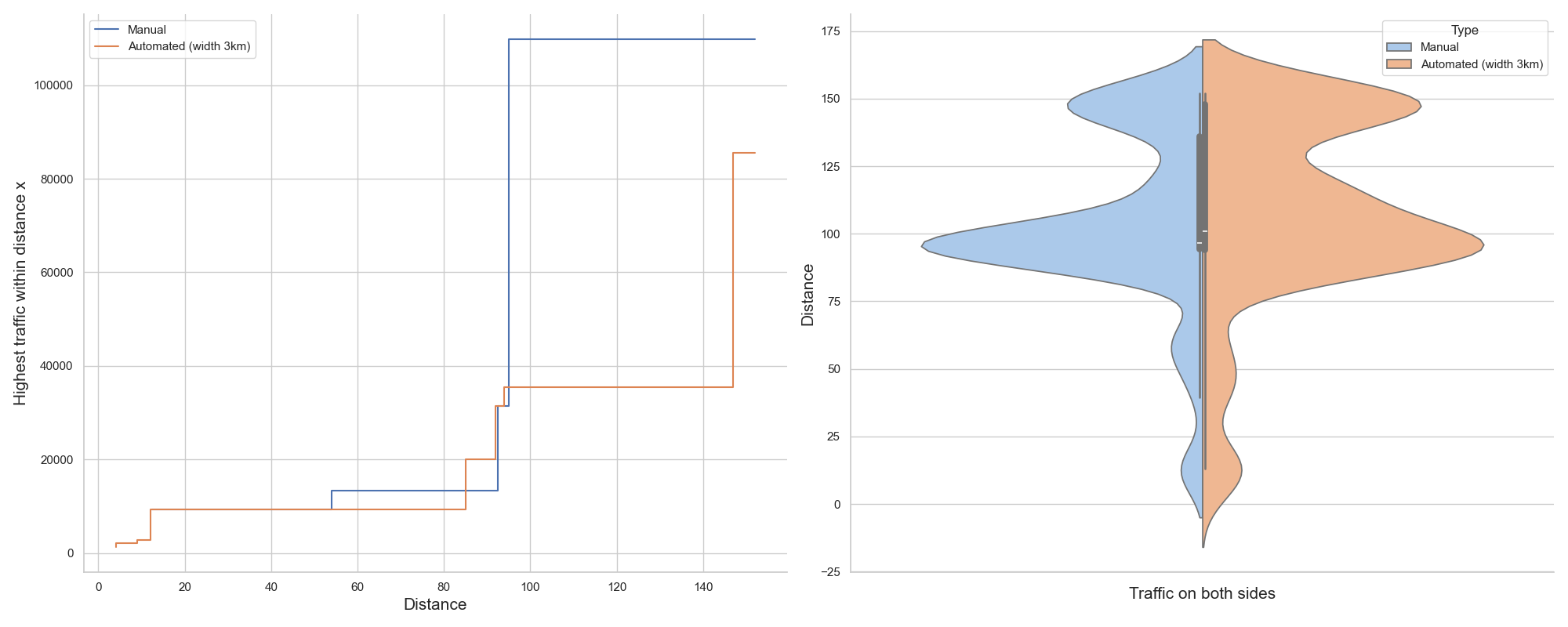}
    \caption{Busiest traffic from the startpoint (52.1577699,-3.5958771) (to the left) and violin plot (to the right) with strip width 3\,km.}
    \label{fig: West_1_3km}
    \centering
    \includegraphics[width=12 cm]{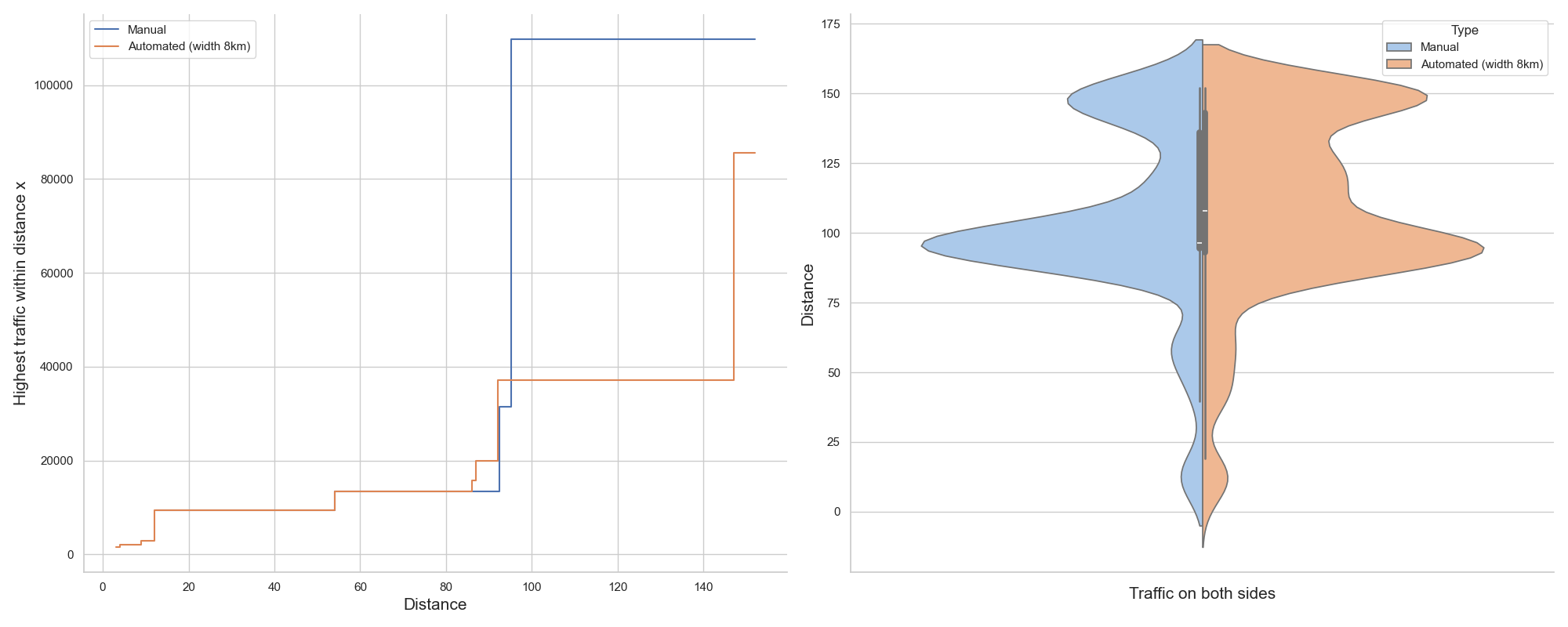}
    \caption{Busiest traffic from the startpoint (52.1577699,-3.5958771) (to the left) and violin plot (to the right) with strip width 8\,km.}
    \label{fig: West_1_8km}
    \centering
    \includegraphics[width=12 cm]{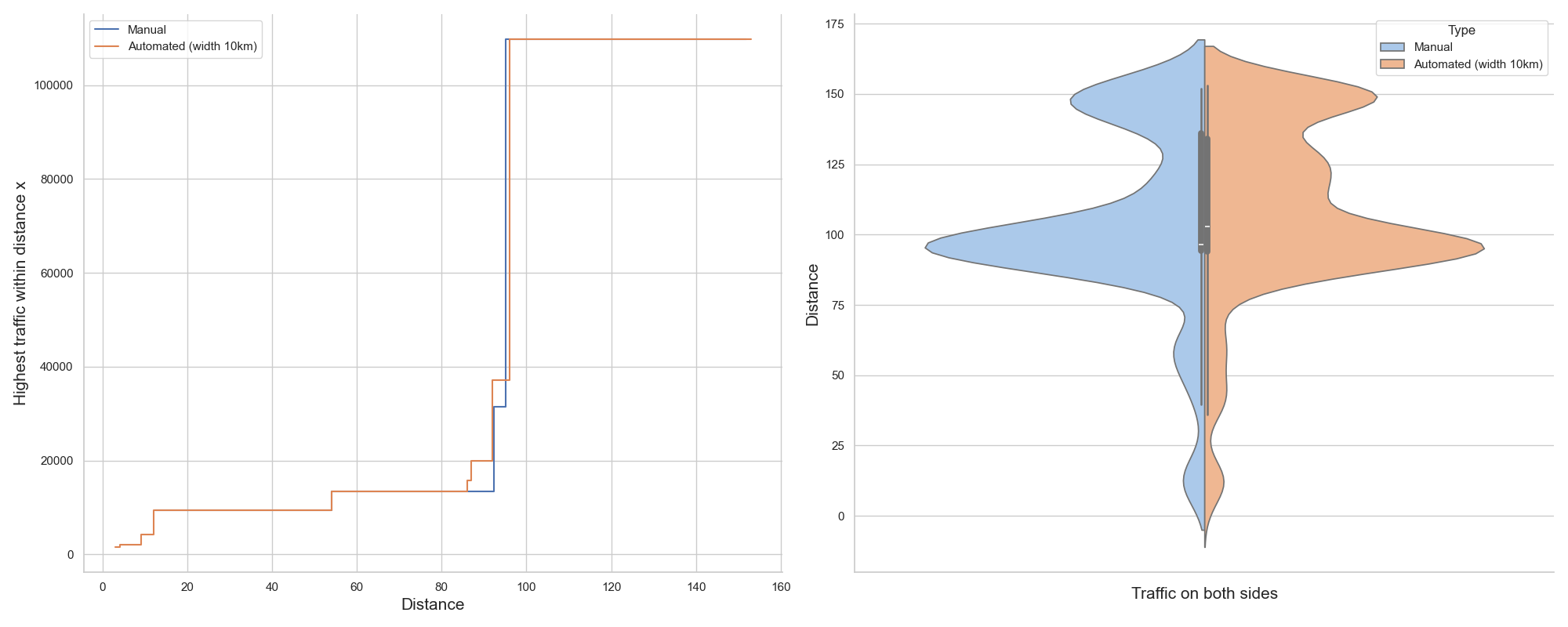}
    \caption{Busiest traffic from the startpoint (52.1577699,-3.5958771) (to the left) and violin plot (to the right) with strip width 10\,km.}
    \label{fig: West_1_10km}
\end{figure}
\begin{figure}[ht]
    \centering
    \includegraphics[width=12 cm]{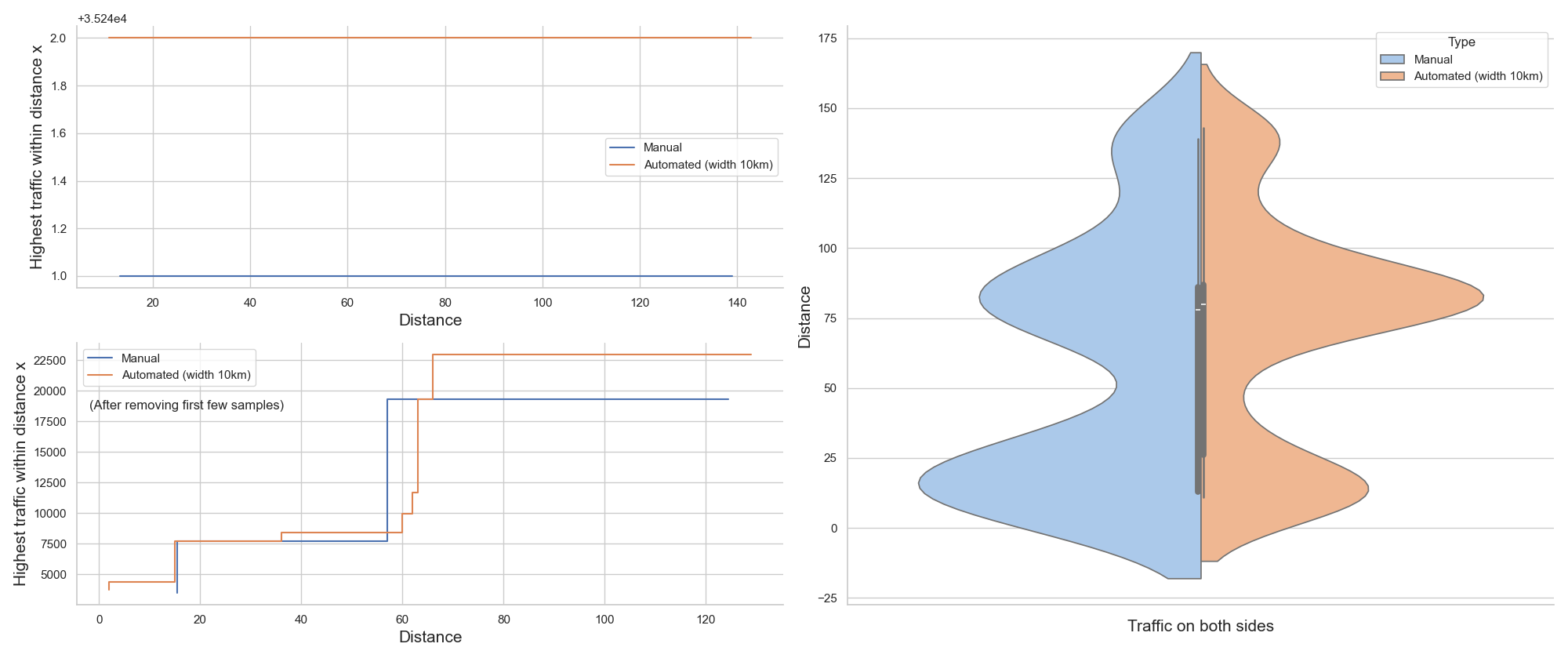}
    \caption{To the left, the busiest traffic from the startpoint (55.606053,-4.0506979) and to the right, the violin plot with a strip width 10\,km. On the left side, the top figure indicates that the busiest traffic was captured within the first 10\,km distance by the automated method, but this was a false count point that shouldn't have been included. Inaccuracies like this are expected from time to time. However, the bottom figure shows a much better fit after removing the closest few traffic points.}
    \label{fig:North_10}
\end{figure}

Figure \ref{fig: Stat_South_3km} is the plot when the South-East region is considered with strip width 3\,km, whereas Figure \ref{fig: Stat_West_10km} for the West and North region with the strip width 10\,km. By Theorem $\ref{high_probability_event}$ and Theorem \ref{road_lower_bound}, with positive probability, we should expect $Cd^4$ graph from these plots, where $C$ is constant, but $C$ depends on the region where we are doing the analysis. We saw similar plots when relative frequency thresholds different from $[0.49, 0.51]$ were tried. The plots do not resemble a quartic function, which we explain next.
\begin{figure}[ht]
    \centering
    \includegraphics[width=10 cm]{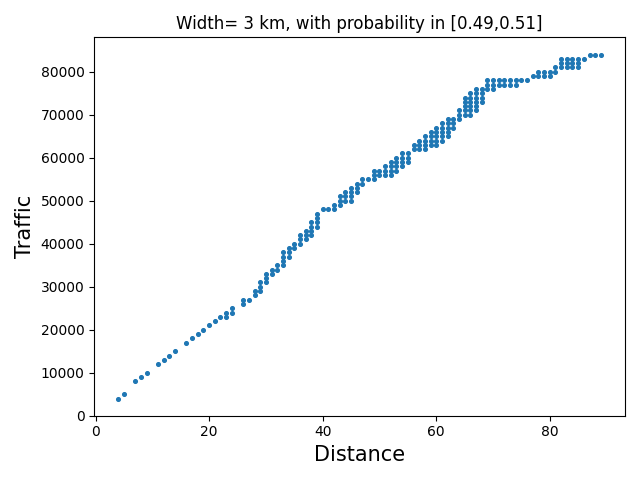}
    \caption{Statistical plot for the South-East region of the UK}
    \label{fig: Stat_South_3km}
\vspace{2cm}
    \centering
    \includegraphics[width=10 cm]{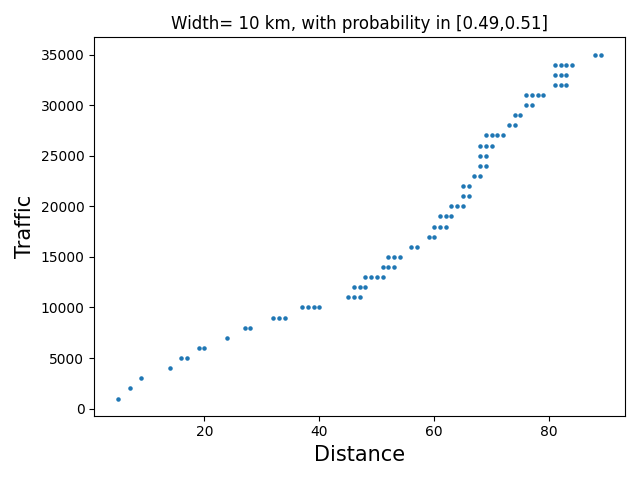}
    \caption{Statistical plot for the West region of the UK}
    \label{fig: Stat_West_10km}
\end{figure}

\subsubsection{Limitations of our analysis}

There are obvious simplifications where our model and statistical procedure do not align with reality. The first of these is that geography is not an i.i.d.\ environment. Long valleys are carved by rivers, introducing obvious correlations in the landscape.

The second observation is the difficulties with extracting geographic information on intersection points between Easterly lines and actual roads. We discussed this in details earlier.

None of these two issues appear to fundamentally change the main ideas of this paper to the extent seen by our UK-wide analysis. Instead, we believe that the main problem is a combination of driving distances and the nature of the road statistics data that we worked with. As apparent from Figure \ref{fig:diffmeas}, small roads are most often not measured, while the bigger, well-documented roads tend to be 20\dots30\,km apart. Combine this with the typical driving distances of 10\dots15\,km for a UK car trip, and add to the picture that two LPP geodesics towards the same direction, started from distance \(d\) apart, need to travel in the order of \(d^{3/2}\) distance before they have a reasonable chance to coalesce. The typical UK road trip is simply too short to have a good opportunity to join one of the well-measured, big roads. Hence, we are missing the upper tail of the traffic count distribution that would show the sharp growth of a \(d^4\) function. We suspect that more detailed measurements of smaller roads would match the predictions better but such data just do not seem to be available.

Further observations distancing our model from real road networks can be found in Solon et al.\ \cite{sol_bun_chu_kar_opti_paths_poly}. As already mentioned in the introduction, they found that for road networks the environment is best described as having a heavy-tailed distribution, which is not the case for our framework of Exponential LPP. They also found long-range positive correlations in road density, spanning across hundreds of kilometers. Without traffic counts we do not have a meaningful way to define road density in our model as we assume that cars start their journeys from each point of \(\mathbb Z^2\). As for busier roads in our model, we would guess negative correlations rather than positive ones. This might be due to a busy road representing a stretch of preferable environment, which might attract other traffic hence make busy roads less likely nearby; this could be a question for future mathematical research.

\section*{Acknowledgements}

The authors thank Riddhipratim Basu for useful discussions throughout the project. We also thank for discussions with Yuri Bakhtin, Atal Bhargava, Sanchari Goswami and Aquib Molla in the early phases of this work, and Janosch Ortmann later. We are also grateful for thorough reading of the manuscript and helpful comments on improvements by anonymous reviewers.

MB was partially supported by the Engineering and Physical Sciences Research Council EP/W032112/1 Standard Grant of the UK. SB was supported by scholarship from National Board for Higher Mathematics (NBHM) (ref no: 0203/13(32)/2021-R\&D-II/13158). DH was partially supported by National Science Foundation grant DMS-2054559. KD was supported by the Prime Minister's Research Fellowship PM/PMRF-22-18686.03 (PMRF ID: 0202965) from the Ministry of Education of India. This project was initiated at the International Centre for Theoretical Sciences (ICTS), Bengaluru, India during the program ``First-passage percolation and related models'' in July 2022 (code: ICTS/fpp-2022/7), the authors thank ICTS for the hospitality.

\smallskip\noindent
All data used in this study are publicly available at \cite{roadtraffic_gov_uk} and \cite{earthexplorer}.

\section*{Appendix: Auxiliary Estimates}
 Here we state the estimates that we have used in the proofs. For completeness, we first start with a simple special case of Bonferroni's inequality.
 \begin{proposition}\label{pr:bonfe}
  Let \(k\ge2\) and \(A_1,\,A_2,\,\dots A_k\) be any events in a probability space. Then
  \[
   \mathbb P\Bigl(\bigcup_{i=1}^kA_i\Bigr)\ge\sum_{i=1}^k\mathbb P(A_i)-\sum_{1\le i<j\le k}\mathbb P(A_i\cap A_j).
  \]
 \end{proposition}
 \begin{proof}
  We show this by induction. When \(k=2\), the equality is the base case of the inclusion-exclusion formula. Let us now assume the statement for \(k\), and write
  \[
   \begin{aligned}
    \mathbb P\Bigl(\bigcup_{i=1}^{k+1}A_i\Bigr)
    &=\mathbb P\biggl(\Bigl(\bigcup_{i=1}^kA_i\Bigr)\cup A_{k+1}\biggr)\\
    &=\mathbb P\Bigl(\bigcup_{i=1}^kA_i\Bigr)+\mathbb P(A_{k+1})-\mathbb P\biggl(\Bigl(\bigcup_{i=1}^kA_i\Bigr)\cap A_{k+1}\biggr)\\
    &=\mathbb P\Bigl(\bigcup_{i=1}^kA_i\Bigr)+\mathbb P(A_{k+1})-\mathbb P\Bigl(\bigcup_{i=1}^k(A_i\cap A_{k+1})\Bigr).
   \end{aligned}
  \]
  We can now apply the inductive hypothesis on the probability of first union, while Boole's inequality on the second. With the minus sign of the display these point in the same direction:
  \[
   \begin{aligned}
    \mathbb P\Bigl(\bigcup_{i=1}^{k+1}A_i\Bigr)
    &\ge\sum_{i=1}^k\mathbb P(A_i)-\sum_{1\le i<j\le k}\mathbb P(A_i\cap A_j)+\mathbb P(A_{k+1})-\sum_{i=1}^k\mathbb P(A_i\cap A_{k+1})\\
    &=\sum_{i=1}^{k+1}\mathbb P(A_i)-\sum_{1\le i<j\le{k+1}}\mathbb P(A_i\cap A_j)
   \end{aligned}
  \]
  as required by collapsing the summations.
 \end{proof}
 Next, the statement of Berry-Esseen inequality.

 \begin{proposition}[Berry-Essen inequality, {\cite[XVI.5.]{Feller_book}}]
\label{p: Berry-Essen}Let $X_1,X_2,\dots$ be i.i.d. with $\mathbb{E}(X_i)=0, \mathbb{E}X_i^2=\sigma^2, \mathbb{E}|X_i|^3=\rho < \infty.$ If $F_n(x)$ is the distribution of $\frac{X_1+X_2+\dots+X_n}{\sigma \sqrt n}$ and $\mathcal{N}(x)$ is the standard normal distribution, then for all $n$ and $x$
\[
\left \vert F_n(x)-\mathcal{N}(x) \right \vert \leq \frac{3 \rho}{\sigma^3 \sqrt n}.
\]
\end{proposition}
Next we state the estimates from exponential LPP. We start with transversal fluctuation of semi-infinite geodesics.

Let $\Gamma^\alpha$ ( resp.\ $\mathcal{J}^\alpha$) denote the semi-infinite geodesic (resp.\ the straight line) in the direction $\alpha$ starting from $\boldsymbol{0}$ and $\Gamma^{\alpha}(T)$ (resp.\ $\mathcal{J}^{\alpha}(T)$) denote the intersection points of $\Gamma^\alpha$ (resp.\ $\mathcal{J}^\alpha$) with $\mathcal{L}_{T}.$ We have the following.
\begin{proposition}\cite[Proposition 1.5]{BBB23}
\label{transversal_fluctuation_of_semi_infinite_geodesic}
For $\varepsilon>0$ there exist $C_1,c_1>0$ such that for all $T >0, \ell >0, n \geq 1$ and for all $\alpha \in (\varepsilon, \frac{\pi}{2}-\varepsilon)$ we have
\begin{enumerate}[label=(\roman*), font=\normalfont]
    \item 
            $\mathbb{P}(|\psi(\Gamma^{\alpha}(T))-\psi(\mathcal{J}^{\alpha}(T))|\ge  \ell T^{2/3}) \le C_1e^{-c_1 \ell^3}$,
            \item 
            $\mathbb{P}( \sup \{ |\psi(\Gamma^{\alpha}(t))-\psi(\mathcal{J}^{\alpha}(t))|: 0 \leq t \leq T)\} \geq \ell T^{2/3}) \leq C_1e^{-c_1\ell^3}$.
            \end{enumerate}
\end{proposition}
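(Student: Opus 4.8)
The plan is to prove the bound through the classical ``curvature plus one-point fluctuation'' mechanism for KPZ geodesics. Write $g$ for the limiting shape of exponential LPP, $g(x,y)=(\sqrt x+\sqrt y)^2$, which is strictly concave with second derivative bounded away from $0$ on any compact set of directions separated from the axes; this is exactly where the hypothesis $\alpha\in(\epsilon,\frac\pi2-\epsilon)$ enters. Fix the level $T$ and let $\mathbf r=\mathcal J^\alpha(T)$ be the point where the straight line crosses $\mathcal L_T$. If the geodesic crosses $\mathcal L_T$ at space-displacement $d$ from $\mathbf r$, then by strict concavity the expected last passage time along any path forced through that point is smaller than the unconstrained one by an amount of order $d^2/T$. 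Setting $d=\ell T^{2/3}$ turns this deficit into $\asymp\ell^2 T^{1/3}$, i.e.\ $\ell^2$ units on the natural $T^{1/3}$ fluctuation scale. The upper-tail moderate-deviation estimate for exponential LPP, $\mathbb P(T\ge \mathbb E T+xT^{1/3})\le Ce^{-cx^{3/2}}$, evaluated at $x\asymp\ell^2$, produces precisely the $e^{-c\ell^3}$ decay; the lower tail contributes a subdominant $e^{-c\ell^6}$. This is the arithmetic behind the exponent $3$.

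To implement this for part (i), I would first treat point-to-point geodesics $\Gamma_{0,p}$ with $p\in\mathcal L_M$ in direction $\alpha$ and $M$ a fixed large multiple of $T$, so that the two legs $T(0,w)$ and $T(w,p)$ live on comparable scales. The event that $\Gamma_{0,p}$ crosses $\mathcal L_T$ at displacement $\ge \ell T^{2/3}$ is the event that the constrained passage time through the far part $B$ of $\mathcal L_T$ equals the free one; this is controlled by combining the curvature deficit above with the one-point upper tail for $\max_{w\in B}[T(0,w)+T(w,p)]$ and the lower tail for $T(0,p)$. I would then pass to the semi-infinite geodesic $\Gamma^\alpha$ either by running $M\to\infty$ and invoking the a.s.\ coalescence of $\Gamma_{0,p}$ with $\Gamma^\alpha$ up to level $T$ (the existence and uniqueness being guaranteed by \cite{C11,FP05}), or, more robustly, by replacing the long leg $T(w,p)$ with the corresponding Busemann increment in direction $\alpha$, whose fluctuations remain on the scale $T^{1/3}$ and are therefore insensitive to $M$.

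Part (ii) I would deduce from (i) by a dyadic decomposition of the time interval $[0,T]$. On the block $t\in[T2^{-k-1},\,T2^{-k}]$ the intrinsic transversal scale is $(T2^{-k})^{2/3}$, so a displacement of $\ell T^{2/3}$ amounts to $\asymp\ell\,2^{2k/3}$ local standard deviations and, by the fixed-level estimate, has probability at most $e^{-c\ell^3 4^{k}}$; the geodesic's excursion inside a block is tied to its endpoint values by planarity together with a local modulus-of-continuity bound. Summing $\sum_{k\ge0}e^{-c\ell^3 4^k}$ is dominated by $k=0$ and preserves the $e^{-c\ell^3}$ form.

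The step I expect to be genuinely delicate is controlling the maximum of the passage-time profile $w\mapsto T(0,w)+T(w,p)$ (or its Busemann analogue) over the transversal line without losing a polynomial-in-$T$ factor from a naive union bound over the $\asymp T^{2/3}$ lattice points in the near region. The saving comes from the fact that the transversal correlation length at level $T$ is exactly $T^{2/3}$, so that segment carries only $O(1)$ effectively independent points; turning this heuristic into a rigorous bound requires a chaining / modulus-of-continuity estimate on the profile, combined with the quadratic growth of the curvature penalty to make the far dyadic blocks summable. A secondary point needing care is the uniformity of the constants over $\alpha\in(\epsilon,\frac\pi2-\epsilon)$, which follows once the curvature of $g$ is bounded below on this compact range --- the estimate genuinely degrades as $\alpha\to0$ or $\frac\pi2$, matching the dark low-traffic mesh near the axes visible in Figure \ref{fig:simu}.
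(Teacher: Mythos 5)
The paper does not actually prove Proposition \ref{transversal_fluctuation_of_semi_infinite_geodesic}: it is imported verbatim from \cite[Proposition 1.5]{BBB23}, so there is no in-paper argument to compare against. That said, your sketch is the standard route by which such estimates are established in the cited literature, and it is correct at the level of strategy: strict concavity of the shape function $(\sqrt x+\sqrt y)^2$ on a compact cone of directions gives the quadratic penalty $d^2/T\asymp\ell^2T^{1/3}$ at displacement $d=\ell T^{2/3}$, the upper-tail moderate deviation bound $e^{-cx^{3/2}}$ at $x\asymp\ell^2$ yields the exponent $\ell^3$, and the passage to semi-infinite geodesics via Busemann increments (or via local coalescence with point-to-point geodesics) together with a dyadic decomposition for the supremum in part (ii) is exactly how the proof is organised. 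You correctly identify the two genuinely delicate points --- the chaining over the transversal line needed to avoid a polynomial loss from the union bound over $\asymp T^{2/3}$ lattice sites, and the control of the excursion of the geodesic strictly between dyadic levels --- and these are where the bulk of the technical work in \cite{BBB23} lies; your sketch gestures at them but does not carry them out, which is acceptable for an outline but would need to be filled in for a self-contained proof. One small caution: in part (ii) the single-block-per-scale decomposition $[T2^{-k-1},T2^{-k}]$ must still be supplemented by an argument that a large interior excursion forces a large displacement at a controlled level (planarity plus a local fluctuation estimate at scale $T2^{-k}$ does this), otherwise the reduction to part (i) is incomplete.
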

Next we state a proposition about the fact that geodesics tend to coalesce. 

For $k \in \mathbb{Z}$, let $L_n$ (resp. $L_n^*$) denote the line segment on $\mathcal{L}_0$ (resp. $\mathcal{L}_{2n}$) of length $2\ell^{1/32}n^{2/3}$ with midpoint $\boldsymbol{0}$ (resp. $\boldsymbol{n}_k=(n-kn^{2/3},n+kn^{2/3})$). For $u,u' \in L_n$ and $v,v' \in L_n^*$ we say that $(u,v) \sim (u',v')$ if the geodesics $\Gamma_{u,v}$ and $\Gamma_{u',v'}$ coincide between the lines $\mathcal{L}_{n/3}$ and $\mathcal{L}_{2n/3}$. It is easy to see that $\sim$ is an equivalence relation. Let $M_n^k$ denote the number of equivalence classes.
\begin{proposition}\cite[Proposition 1.6]{BBB23}
\label{coalescence_theorem}
For $\psi < 1$ there exist $C,c > 0$ such that for all $k$ with $|k|+\ell^{1/32} < \psi n^{1/3},$ all $\ell < n^{0.01}$ sufficiently large and all $n \in \mathbb{N}$ sufficiently large we have 
\begin{equation}
\label{tail_bound_for_equivalence_class}
    \mathbb{P}(M_n^k\geq \ell) \leq Ce^{-c\ell^{1/128}}.
\end{equation}
\end{proposition}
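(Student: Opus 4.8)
The plan is to convert the combinatorial count $M_n^k$ into a count of geometric crossing data, confine that data to a thin tube using transversal-fluctuation bounds, and then collapse the count using coalescence at the characteristic scale $n^{2/3}$. First I would reduce the equivalence relation to boundary data. Because point-to-point geodesics are a.s.\ unique and every sub-path of a geodesic is again a geodesic, two pairs satisfy $(u,v)\sim(u',v')$ if and only if $\Gamma_{u,v}$ and $\Gamma_{u',v'}$ meet $\mathcal{L}_{n/3}$ at the same vertex \emph{and} meet $\mathcal{L}_{2n/3}$ at the same vertex: once the crossing vertices $p\in\mathcal{L}_{n/3}$ and $q\in\mathcal{L}_{2n/3}$ agree, both middle segments equal the unique geodesic $\Gamma_{p,q}$. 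Hence $M_n^k$ is exactly the number of distinct pairs $\bigl(\Gamma_{u,v}(n/3),\,\Gamma_{u,v}(2n/3)\bigr)$ realised as $(u,v)$ ranges over $L_n\times L_n^*$, and it suffices to control the number of distinct entry and exit vertices on the two strip boundaries.

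Second I would pass to a favourable event on which all the relevant geodesics are confined. Applying transversal-fluctuation estimates (of the type recorded in Proposition \ref{transversal_fluctuation_of_semi_infinite_geodesic}) to the extremal geodesics joining the endpoints of $L_n$ to the endpoints of $L_n^*$, and then using planarity to sandwich every other $\Gamma_{u,v}$ between these, I obtain that outside an event of probability at most $Ce^{-c\ell^{c'}}$ every crossing vertex lies in intervals $I_1\subset\mathcal{L}_{n/3}$ and $I_2\subset\mathcal{L}_{2n/3}$ of length $O(\ell^{1/32}n^{2/3})$. The hypothesis $|k|+\ell^{1/32}<\psi n^{1/3}$ is precisely what keeps the macroscopic direction of these geodesics bounded away from the axes, so the fluctuation estimates apply uniformly over the whole family.

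Third, and this is the heart of the matter, I would cut the count down by coalescence. Partition $I_1$ into $O(\ell^{1/32})$ sub-intervals of the characteristic length $n^{2/3}$. Two geodesics that enter the strip within $o(n^{2/3})$ of each other coalesce well before exiting it — the coalescence time for separation $s$ scales like $s^{3/2}$, which must exceed the time-width $n/3$ of the strip to keep them distinct, forcing separation $\gtrsim n^{2/3}$ — and the probability that the number of geodesics failing to coalesce across one characteristic window exceeds $t$ decays stretched-exponentially in $t$. Summing these per-window contributions over the $O(\ell^{1/32})$ windows (and running the symmetric argument on the exit side), the typical number of distinct trunks is $O(\ell^{1/32})$, comfortably below $\ell$. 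A union bound then turns $\{M_n^k\ge\ell\}$ into the event that some window carries anomalously many non-coalescing geodesics, which has probability at most $Ce^{-c\ell^{1/128}}$; the small final exponent reflects the compounding of the $\ell^{1/32}$ interval scale with the sub-characteristic spacing $\ell^{-31/32}n^{2/3}$ into which $\ell$ distinct trunks would have to be packed.

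I expect the main obstacle to be the quantitative non-coalescence estimate with the correct stretched-exponential tail: one must show that the probability of many geodesics staying pairwise distinct across a macroscopic strip is tiny, \emph{uniformly} in the two endpoint intervals, and then manage the simultaneous validity of the confinement event, the per-window coalescence events, and the reversal-symmetric estimates on the exit side so that the accumulated error terms never overwhelm the gain. The exactly solvable structure of exponential LPP — via comparison of geodesic weights together with FKG/BK-type correlation inequalities, or via the known polynomial-in-separation non-coalescence estimates — is what makes this tail, and hence the whole bookkeeping, accessible.
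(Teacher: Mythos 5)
First, a point of comparison: the paper does not prove this proposition at all --- it is imported verbatim as \cite[Proposition 1.6]{BBB23} --- so there is no in-paper argument to measure your sketch against. Judged on its own terms, your first two steps are sound: reducing the equivalence classes to the pair of crossing vertices on $\mathcal{L}_{n/3}$ and $\mathcal{L}_{2n/3}$ via uniqueness of point-to-point geodesics is correct, and confining all crossings to windows of width $O(\ell^{1/32}n^{2/3})$ by transversal-fluctuation estimates plus planarity is the standard first move.

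The genuine gap is in your third step, which you yourself flag as ``the heart of the matter'' but then assert rather than prove. The claim that ``the probability that the number of geodesics failing to coalesce across one characteristic window exceeds $t$ decays stretched-exponentially in $t$'' is, after your reductions, essentially the proposition itself restricted to a single window of width $n^{2/3}$; invoking it is circular. The heuristic that the coalescence time for separation $s$ scales like $s^{3/2}$ controls typical behaviour (equivalently, first moments of the number of surviving trunks), but a first-moment bound of order $\ell^{1/32}$ only gives $\mathbb{P}(M_n^k\geq\ell)\lesssim \ell^{1/32-1}$ by Markov, nowhere near a stretched-exponential tail, and no union bound over $O(\ell^{1/32})$ windows can upgrade it. The proofs of such tails in the literature (including the one in \cite{BBB23} that the paper leans on) proceed by a multi-scale iteration: repeatedly subdividing the strip, showing via one-point moderate-deviation estimates for passage times and ordering of geodesics that at each scale a positive fraction of the surviving classes merge with probability bounded below, and compounding the failure probabilities across the scales; it is that iteration which produces the weak exponent $1/128$, not the packing count of $\ell$ trunks into spacing $\ell^{-31/32}n^{2/3}$ that you describe. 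Without supplying that quantitative per-window coalescence estimate, the proposal does not constitute a proof.
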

Finally we have the following lemma about the size of intersection of geodesics with different directions. Recall the line segments $\mathcal{J}'_i, \mathcal{J}_i'', \mathcal{J}_j',\mathcal{J}_j''$ and the parallelogram $W_{i,j}$ in the proof of Theorem \ref{depth_bounds_theorem} lower bound. We have the following lemma.
\begin{lemma}\cite[Lemma 3.2, Lemma 3.3]{BBB23}
\label{lemma: 3.2, 3.3}
Consider any geodesic $\Gamma_i$ (resp.\ $\Gamma_j$) starting from $\mathcal{J}_i'$ (resp.\ $\mathcal{J}_j'$) and ending at $\mathcal{J}_i''$ (resp.\ $\mathcal{J}_j'')$. If $I_{\Gamma_i,\Gamma_j}$ denote the intersection size of $\Gamma_i$ and $\Gamma_j$ inside $W_{i,j}$ then for $\ell$ and $n$ large enough there exist $C,c>0$ such that we have 
\begin{displaymath}
    \mathbb{P}\left(\max I_{\Gamma_i,\Gamma_j} \geq \frac{\ell \log|k_{i,j}| n}{k_{i,j}^3}\right) \leq Ce^{-c \ell}.
\end{displaymath}
\end{lemma}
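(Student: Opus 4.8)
The plan is to convert a long common segment of $\Gamma_i$ and $\Gamma_j$ into an atypically large \emph{local} transversal fluctuation of one of the two geodesics, and then to feed this into the windowed fluctuation estimates of \cite[Lemmas 3.2, 3.3]{BBB23}. Two preliminary reductions come first. By the a.s.\ uniqueness of geodesics the set $\Gamma_i\cap\Gamma_j$ is a single contiguous segment: if $\Gamma_i$ and $\Gamma_j$ share two ordered vertices $p\le q$, then each restricted between $p$ and $q$ is \emph{the} geodesic from $p$ to $q$, so they agree there. Since every up-right step raises the time coordinate $\phi$ by exactly one, the intersection size $I_{\Gamma_i,\Gamma_j}$ equals, up to $+1$, the time-length $L:=\phi_{\max}-\phi_{\min}$ of this segment, where $[\phi_{\min},\phi_{\max}]$ is the time-window it spans inside $W$.

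Next I would set up the drift-versus-fluctuation balance that forces anomalous behaviour when $L$ is large. The representative directions obey $|\theta_i-\theta_j|\asymp k_{i,j}n^{-1/3}$, and since all angles lie in $(\epsilon,\tfrac\pi2-\epsilon)$ the straight lines $\mathcal J^{\theta_i},\mathcal J^{\theta_j}$ (both through $\boldsymbol{0}$, the centre of $W$) separate in the space coordinate $\psi$ at rate $\asymp k_{i,j}n^{-1/3}$ per unit time. Over the window $[\phi_{\min},\phi_{\max}]$ the two geodesics share both endpoints, so their net $\psi$-displacements agree; subtracting the corresponding straight-line displacements shows that the local fluctuation increments $F_i,F_j$ (the deviations of $\Gamma_i,\Gamma_j$ from their own lines) satisfy $|F_i(\phi_{\max})-F_i(\phi_{\min})|+|F_j(\phi_{\max})-F_j(\phi_{\min})|\gtrsim k_{i,j}n^{-1/3}L$. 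Hence one of the two geodesics has a local transversal increment of at least $\tfrac12 k_{i,j}n^{-1/3}L$ over a window of length $L$. Comparing to the natural local scale $L^{2/3}$ gives a ratio $\asymp k_{i,j}n^{-1/3}L^{1/3}$, which at the threshold $L=I_0:=\ell\log|k_{i,j}|\,n\,k_{i,j}^{-3}$ equals $\lambda:=(\ell\log|k_{i,j}|)^{1/3}$. I emphasise that it is the \emph{increment} over the window, not the pointwise deviation, that must be anomalous; this is exactly why the sharper local estimates \cite[Lemmas 3.2, 3.3]{BBB23} (rather than the pointwise Proposition \ref{transversal_fluctuation_of_semi_infinite_geodesic}) are needed, and they bound the probability of such an increment by $C\exp(-c\lambda^3)=C\exp(-c\,\ell\log|k_{i,j}|)=C\,|k_{i,j}|^{-c\ell}$.

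Finally I would upgrade the single-pair bound to the maximum over the two families and over the random location of the overlap. For the window I would tile the time-extent of $W$, which has length $\asymp n/k_{i,j}$, by $\asymp k_{i,j}^2/(\ell\log|k_{i,j}|)\le k_{i,j}^2$ blocks of length $\asymp I_0$; any overlap of length $\ge I_0$ contains one full block, on which the balance above applies. For the families of point-to-point geodesics I would use planarity to sandwich every geodesic from $\mathcal J_i'$ to $\mathcal J_i''$ (resp.\ $\mathcal J_j'$ to $\mathcal J_j''$) between its two extremal geodesics, exactly as in Lemma \ref{similar_first_lemma}, so that on each block only finitely many geodesics must be controlled rather than all polynomially many endpoint pairs. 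A union bound then yields probability at most $\asymp k_{i,j}^2\cdot|k_{i,j}|^{-c\ell}=|k_{i,j}|^{2-c\ell}$; choosing $M\ge e$ makes $\log|k_{i,j}|\ge1$, so for $\ell$ large this is at most $Ce^{-c'\ell}$ with $c'=c/2$ depending only on $\epsilon$. This is precisely the role of the $\log|k_{i,j}|$ gain in the threshold: it turns the per-block bound into the power $|k_{i,j}|^{-c\ell}$, which absorbs the $\asymp k_{i,j}^2$ blocks. I expect the main obstacle to be the local (windowed) fluctuation estimate itself: unlike the pointwise bound of Proposition \ref{transversal_fluctuation_of_semi_infinite_geodesic}, it is a modulus-of-continuity statement for the geodesic's transversal increments, and applying it uniformly over the data-dependent window and over all geodesic pairs is the delicate point; the tiling together with the sandwiching reduces this to finitely many events of the controlled form, which is the content borrowed from \cite[Lemmas 3.2, 3.3]{BBB23}.
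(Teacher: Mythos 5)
The paper does not actually prove this statement: it is imported wholesale from \cite[Lemmas 3.2, 3.3]{BBB23} and used as a black box, so there is no in-paper argument to compare yours against. Your reconstruction follows the standard ``drift beats fluctuation'' mechanism that is almost certainly behind the cited result: uniqueness makes the overlap a single contiguous segment, the two geodesics' anchor directions separate at rate $\asymp k_{i,j}n^{-1/3}$ in $\psi$ per unit $\phi$, sharing both endpoints of an overlap of time-length $L$ forces one of them to have a local transversal increment $\gtrsim k_{i,j}n^{-1/3}L$, and at $L=\ell\log|k_{i,j}|\,n\,k_{i,j}^{-3}$ the resulting anomaly parameter $\lambda=(\ell\log|k_{i,j}|)^{1/3}$ fed into an $e^{-c\lambda^3}$ tail gives $|k_{i,j}|^{-c\ell}$, which survives the union bound over $\asymp k_{i,j}^{2}$ blocks precisely because of the $\log|k_{i,j}|$ in the threshold. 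That arithmetic is right and shows you understand why the logarithm is there.

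The gap is that your argument is circular as written: you propose to ``feed this into the windowed fluctuation estimates of \cite[Lemmas 3.2, 3.3]{BBB23}'', but those are exactly the statements you are being asked to prove. The genuinely nontrivial input — an \emph{interior} (windowed) transversal fluctuation estimate for point-to-point geodesics, giving tail $Ce^{-c\lambda^3}$ for an increment of $\lambda L^{2/3}$ over a sub-window of length $L$ deep inside the geodesic, uniformly over the random location of the window — is not supplied by Proposition \ref{transversal_fluctuation_of_semi_infinite_geodesic}, which only controls deviations measured from the starting point at the global scale $T^{2/3}$. Deriving the local version (typically via ordering the geodesic between two auxiliary geodesics restarted at the ends of the block, plus one-point fluctuation bounds at scale $L$) is the actual content of the cited lemmas, and your proposal acknowledges but does not close this. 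Two smaller points: a segment of length $I_0$ need not contain a full tile of an $I_0$-mesh, so you need half-length or overlapping blocks; and for point-to-point geodesics the relevant slope separation is between endpoint pairs, which is $\gtrsim k_{i,j}n^{-1/3}$ only when the segment half-widths $\asymp\ell^{1/96}n^{2/3}$ are small compared with $k_{i,j}n^{2/3}$ — a regime restriction you should state.
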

    
\printbibliography
\end{document}